\numberwithin{equation}{section}
\colorlet{linkequation}{cyan}
\newcommand*{\refeq}[1]{%
  \begingroup
    \hypersetup{
      linkcolor=linkequation,
      linkbordercolor=linkequation,
    }%
    \ref{#1}%
  \endgroup
}
\newtheorem*{remark}{Remark}
\newtheorem{theorem}{Theorem}[section]
\newtheorem{lemma}{Lemma}[section]
\newtheorem{proposition}{Proposition}[section]
\newtheorem{corollary}{Corollary}[section]
\newtheorem{definition}{Definition}[section]
\newcommand{\R}{\mathbb{R}}
\newcommand{\E}{\mathbb{E}}
\newcommand{\Z}{\mathbb{Z}}
\newcommand{\e}{e}
\newcommand{\F}{\mathcal{F}}
\renewcommand{\P}{\mathbb{P}}
\newcommand{\Q}{\mathbb{Q}}
\newcommand{\si}{\sigma}
\newcommand{\indic}{\mathbb{I}}
\title{Critical scaling for an anisotropic percolation system on $\Z^2$}
\author{Thomas Mountford \thanks{Institut de Math\'ematiques, Ecole Polytechnique F\'ed\'erale de Lausanne, 1015 Lausanne, Switzerland. Email: thomas.mountford@epfl.ch, hao.xue@epfl.ch} \and
Maria Eul\'alia Vares \thanks{Instituto de Matem\'atica, Universidade Federal do Rio de Janeiro, RJ, Brazil. Email: eulalia@im.ufrj.br} \and
Hao Xue \footnotemark[1]}
\begin{document}
\maketitle
\begin{abstract}
In this article, we consider an anisotropic finite-range bond percolation model on $\mathbb{Z}^2$. On each horizontal layer $\{(x,i)\colon x \in \mathbb{Z}\}$ we have edges $\langle (x,i),(y,i)\rangle$ for $1 \le |x-y|\le N$. There are also vertical edges connecting two nearest neighbor vertices on distinct layers $\langle (x,i),(x,i+1)\rangle$ for $x,i \in \mathbb{Z}$. On this graph we consider the following anisotropic independent percolation model: horizontal edges are open with probability $1/(2N)$, while vertical edges are open with probability $\epsilon$ to be suitably tuned as $N$ grows to infinity. The main result tells that if $\epsilon = \kappa N^{-2/5}$, we see a phase transition in $\kappa$: positive and finite constants $C_1, C_2$ exist so that there is no percolation if $\kappa <C_1$ while percolation occurs for $\kappa >C_2$. The question is motivated by a result on the analogous layered ferromagnetic Ising model at mean field critical temperature \cite[\emph{J. Stat. Phys.} {\bf 161}, (2015), 91--123]{eulalia} for which the authors showed the existence of multiple Gibbs measures for a fixed value of the vertical interaction and conjectured a change of behavior in $\kappa$ when the vertical interaction suitably vanishes as $\kappa\gamma^{b}$, where $1/\gamma$ is the range of the horizontal interaction. For the product percolation model we have a value of $b$ that differs from what was conjectured in that paper. The proof relies on the analysis of the scaling limit of the critical branching random walk that dominates the growth process restricted to each horizontal layer and a careful analysis of the true horizontal growth process, which is interesting by itself. This is inspired by works on the long range contact process \cite[\emph{Probab. Th. Rel. Fields} {\bf 102}, (1995), 519--545]{mueller}. A renormalization scheme is used for the percolative regime.
\end{abstract}

\section{Introduction}
\label{intro}
In this article, we consider an anisotropic finite-range bond percolation model on the plane. For this we let $\Z^2=(V,E)$ be the graph with vertex set $V=\{v=(x,i):x\in \Z,i\in \Z\}$ and edge set $E=E_N=\{ e=\langle v_1,v_2\rangle:v_k=(x_k,i_k), k=1,2;|i_1-i_2|=1 \text { for }x_1=x_2 \text{ and }1\leq |x_1-x_2|\leq N \text{ for }i_1=i_2 \}$. The edge set can be partitioned into two disjoint subsets $E=E_v \cup E_h$. $E_v$ is the set of vertical edges, s.t. $E_v=\{e=\langle v_1,v_2\rangle: x_1=x_2\}$ and $E_h$ denotes the set of horizontal edges, s.t. $E_h=\{e=\langle v_1,v_2\rangle:i_1=i_2\}$ (here $(x_k,i_k)$ corresponds to $v_k$ for $k=1,2$). Each vertical edge is open with probability $\epsilon$ and each horizontal edge is open with probability $1/(2N)$, and they are all independent of each other. Our main purpose is to study the existence of percolation in this system, with $\epsilon=\epsilon (N)$ that tends to zero as $N$ grows to infinity.

The basic motivation for this paper comes from a question raised in \cite{eulalia}, where the authors investigated the existence of phase transition for an anisotropic Ising spin system on the square lattice $\mathbb{Z}^2$. On each horizontal layer $\{(x,i) \colon x \in \mathbb{Z}\}$ the $\{-1,+1\}$-valued spins $\sigma(x,i)$ interact through a ferromagnetic Kac potential at the mean field critical temperature, i.e. the interaction between the spins $\sigma(x,i)$ and $\sigma (y,i)$ is given by
\begin{equation*}
- J_\gamma(x,y) \si(x,i)\si(y,i),\quad \sum_{y\ne x} J_\gamma(x,y)=1,
\end{equation*}
where $J_\gamma(x,y) = c_\gamma \gamma J(\gamma (x-y))$, and one assumes  $J(r)$, $r\in \mathbb R$,
to be smooth and symmetric with  support in $[-1,1]$, $J(0)>0$, $\int J(r)dr=1$, and moreover $c_\gamma$ is the normalization constant ($c_{\gamma}\rightarrow 1$ as $\gamma\rightarrow 0$). To this one adds a small nearest neighbor vertical interaction
$$
-\epsilon \sigma(x,i)\sigma (x,i+1),
$$
and the authors proved in \cite{eulalia} that given any $\epsilon >0$, for all $\gamma>0$ small $\mu^+_\gamma \neq \mu^-_\gamma$, where $\mu^+_\gamma$ and $\mu^-_\gamma$ denote the Dobrushin-Lanford-Ruelle (DLR) measures obtained as thermodynamic limits of the Gibbs measures with $+1$, respectively $-1$ boundary conditions.

One of the questions left open in \cite{eulalia} has to do with the following: how small can we take $\epsilon=\epsilon(\gamma)$ and still observe a phase transition of the Ising model (for all $\gamma$ small)? Following various considerations, the authors conjectured that if $\epsilon=\epsilon(\gamma)=\kappa \gamma^{2/3}$ we might see a different behavior while varying $\kappa$. This is the problem that motivates this paper. Our technique does not give an answer to the Ising system, but considering the related product percolation model and usual Fortuin-Kasteleyn-Ginibre (FKG) comparison, it yields a partial answer to the question, and shows that the conjecture has to be modified. (See Remark (b) after the statement of Theorem \ref{thm:main}.)

Indeed, the original problem just described could be formulated in terms of percolation for a Fortuin-Kasteleyn (FK) measure with shape parameter $q=2$. Here we treat a simpler case by considering a corresponding anisotropic percolation model on $\Z^2$. Since a Kac potential can be taken as an interaction of strength $\gamma$ with range $\gamma^{-1}$ (which we fix as $N$) we consider the edge percolation problem where horizontal edges of length at most $N$ are open with probability $1/(2N)$ and the vertical edges between sites at distance $1$ are open with probability $\epsilon=\kappa N^{-b}$.

With respect to layer $0$, we denote $\mathcal{C}_{\mathbf{0}}^0$ as the cluster containing $(0,0)$, $$\mathcal{C}_{\mathbf{0}}^0=\{x:(0,0)=\mathbf{0}\rightarrow (x,0) \text{ with all the edges along the path in }\Z\times\{0\}\},$$
where $v_1\rightarrow v_2$ means there is an open path from $v_1$ to $v_2$. We can speak of generations on each horizontal layer (we will consider the horizontal behaviour on layer $0$ for simplicity). $x\in \mathcal{C}_{\mathbf{0}}^0$ is of $k$-th generation if the shortest open path from $(0,0)$ to $(x,0)$ has graph distance $k$. That means that there are vertices $v'_1,\cdots,v'_k$ such that $v'_1=(0,0),v'_k=(x,0)$ and for any $1\leq i\leq k-1$,$\langle v'_i,v'_{i+1}\rangle\in E_h$ is open. Denote $G_k^0$ as the collection of vertices that can be reached from $\mathbf{0}$ at $k$-th generation. The sites of $\{G_k^0\}_{k\geq 0}$ form a process very close to a  branching random walk starting from $0$. The difference between $\{G^0_k\}_{k\geq 0}$ and a critical branching process is the domain of the state function. Denote $\{\xi_k(x)\}_{k\geq 0}$ as the critical branching random walk. At each time $k$, particles of occupied sites branch following $\text{Binomial}(2N,1/(2N))$ and move to its $2N$ neighbours uniformly (note that it is different from the model in \cite{lalley} discussed in the next paragraph). The state function $\xi_k(x)\in \Z_+$. However, the process $\{G^0_k\}_{k\geq 0}$ only tells if the site is occupied or not, which takes value in $\{0,1\}$. In Section \ref{sec:envelope}, we show that these two processes are not too different.  This motivates us to consider the asymptotic density on each horizontal layer and use it to derive the cumulated occupied sites over generations. But the introduction of generations will cause a problem in the percolation problem if we only consider the branching random walk. Because we are interested in percolation, the vertical connections should be considered only once over the generations. Therefore, the true process we are considering is a branching random walk with attrition. The attrition means that if any site has been visited during the propagation, it cannot be visited again. 

The way of dealing with horizontal propagation is motivated by the work of Lalley \cite{lalley} on the scaling limit of spatial epidemics on the one-dimensional lattice $\Z$ to Dawson-Watanabe process with killings. The process considered in \cite{lalley} is as follows. At each site $i$, there is a fixed population (or village) of $N$ individuals and each of them can be either susceptible, infected or recovered. The model runs in discrete time; an infected individual recovers after a unit of time and cannot be infected again. An infected individual may transmit the infection to a randomly selected (susceptible) individual in the same or in the neighboring villages. Denote $p_N(i,j)$ as the transmission probability between any infected particle at site $i$ and any susceptible particle at site $j=i+e$, where $e=0$ or $\pm1$. For any pair $(x_i,u_j)$ of infected and susceptible individuals located at $i$ and $j$ respectively with $|i-j|\leq 1$, the transmission probability is taken as $$p_N(i,j)=\frac{1}{3N},$$ which makes it asymptotically critical (as $N\rightarrow\infty$).
The evolution of this SIR dynamics can be studied with the help of a branching random walk envelope: any individual at site $i$ and time $t$ lives for one unit and reproduces, placing a random number of individuals at a nearest site $j$ with $|j-i|\leq 1$, where the random number is of law $\text{Binomial}(N,1/(3N))$. The individuals are categorized into Susceptible, Infected or Recovered (SIR) and any recovered individual is immune and will not be infected again. The author studied the scaling limit (space factor $N^{\beta/2}$ and time factor $N^{\beta}$) of this system by considering the cluster of particles at each site village and calculating the log-likelihood functions. The recovered individuals do matter only when $\beta=2/5$, which corresponds to the attrition part of our process. To study the scaling limit of our process on horizontal level, we first need to perform space and time rescaling on the approximate density. First we have to scale the space with $N$, then the movement of the edges from $x$ will have a uniform displacement on $x+[-1,1]/\{0\}$.  Then, to get the weak convergence, we will renormalize the space and time with $N^{\alpha}$ and $N^{2\alpha}$ respectively. The state of the process at time $n\in \Z^+$ is given by $\hat{\xi}_n(\cdot):\Z/N^{1+\alpha}\rightarrow \{0,1\}$. $\hat{\xi}_n(x)=0$ indicates that the site $x$ is vacant and $\hat{\xi}_n(x)=1$ indicates that the site $x$ is occupied. Two sites are neighbors in the scaled space, denoted by $y\sim x$ if $|x-y|\leq N^{-\alpha}$ (or $j\sim i$ if $|j-i|\leq N$ in the unrescaled space). We are going to consider the asymptotic approximate density $$(A\hat{\xi})(x)=\frac{1}{2N^{\alpha}}\sum_{y\sim x}\hat{\xi}(y),$$
and study its limit after the above mentioned time change.

The method in \cite{lalley} is to calculate the log-likelihood function with respect to a branching envelope with known asymptotic density. However, we do not have the log-likelihood function in our case. A more standard argument is to show the weak convergence of the rescaled continuous-time particle system by verifying the tightness criteria \cite{ek} like in \cite{mueller}, \cite{rescaledvoter} and \cite{rescaledcontact}. We will mainly refer to the way of Mueller and Tribe \cite{mueller} dealing with long-range contact process and long-range voter model and adapt it to our discrete model to get the asymptotic stochastic PDEs. Our strategy on the horizontal level is to derive the asymptotic density of the branching random walk without attrition dominating the true system, where the states are denoted by $\xi(x)$. In the branching random walk, the case of multiple particles at one site is allowed. But we can show that the probability of multiple particles is small with order $O(N^{\alpha-1})$. Then the state can be reduced from $\mathbb{N}$-valued to $\{0,1\}$-valued. We will then derive the asymptotic density of the true process. Since we are considering the existence of percolation, to consider the infinite cluster containing $(0,0)$ is equivalent to consider $2\lfloor N^{2\alpha}\rfloor$ equally spaced particles on $\left\{-\lfloor N^{1+\alpha}\rfloor,\dots,0,\dots, \lfloor N^{1+\alpha}\rfloor\right\}$ (so the distance between particles in $\Z$ is of order $N^{1-\alpha}$). Indeed, if we denote $[-r,r]_N=[-r,r]\cap\Z/N^{1+\alpha}$ as the rescaled discrete interval, to show percolation we may take an initial condition $\hat\xi_0$ with finite support, such that $A(\hat\xi_0)(x)=1$ for $x\in[-1,1]_r$ and whose linear interpolation tends (as $N\rightarrow\infty$) to a continuous function $f$ with compact support such that $f(x)=1$ for $x\in[-1,1]$. For simplicity, we may take $f$ to vanish outside $[-1-\delta,1+\delta]$ for some $\delta>0$ fixed, and linear in $[-1-\delta,1]$ and $[1,1+\delta]$.

When showing percolation and adding the vertical connections, by a renormalization argument (ref. \cite{supercriticalcontact}) we can reduce our layered system to an oriented percolation. We can define a site as open if its corresponding block has a certain amount of cumulated density, since we have already taken into account the attrition in the true system. As it will be explained in Section \ref{sec:existence}, we will indeed need to consider the density on a smaller scale. After building the renormalization argument, we are able to use the criteria in \cite{durrett} to determine the existence of percolation. The main result of this article is as follows.

\begin{theorem}
\label{thm:main}
The critical values of the scaling and interaction factors are $b=2\alpha=2/5$. That is there exist positive constants $C_1$ and $C_2$ such that for $\kappa<C_1$ not depending on $N$, there is no percolation and for $\kappa>C_2$, there is a percolation, where $\kappa N^{-b}$ is the opening probability of vertical edges.
\end{theorem}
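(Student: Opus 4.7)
The plan combines the horizontal scaling limit developed in Section \ref{sec:envelope} with a layer-to-layer analysis in the vertical direction: a dominating branching argument for the subcritical half and a block renormalization to oriented percolation for the supercritical half. The exponents $\alpha=1/5$, $b=2\alpha=2/5$ are forced by two self-consistency balances. Starting from $M\asymp N^{2\alpha}$ particles on an interval of length $\ell\asymp N^{1+\alpha}$, the critical branching random walk has natural lifetime $n\asymp N^{2\alpha}$ (matching the spread $N\sqrt{n}=\ell$) and accumulates total particle-time $\asymp Mn=N^{4\alpha}$; multiplying by $\epsilon=\kappa N^{-b}$ yields a next-layer seed of size $\kappa N^{4\alpha-b}$, which equals $\kappa M$ precisely when $b=2\alpha$. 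Separately, the density of visited sites at time $n$ is of order $Mn/\ell=N^{3\alpha-1}$, so the integrated attrition correction $n\cdot N^{3\alpha-1}=N^{5\alpha-1}$ enters the SPDE limit at leading order exactly when $\alpha=1/5$.

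\noindent\textbf{Subcritical direction ($\kappa<C_1$).} I would dominate the true horizontal process by the branching random walk envelope from Section \ref{sec:envelope}. Viewing the vertical propagation as a branching-like process whose ``generation $i$'' consists of the entries on layer $i$ inherited from layer $i-1$, the per-entry expected number of vertical openings from a horizontal cluster truncated at the natural time $N^{2\alpha}$ is bounded by $N^{2\alpha}\cdot\epsilon=\kappa$. A careful truncation together with second-moment control of the heavy horizontal-cluster tail then yields $\E\bigl[\sum_i M_i\bigr]<\infty$ for $\kappa<C_1$ with $C_1$ independent of $N$, where $M_i$ is a suitably truncated count of sites on layer $i$ reached from $(0,0)$. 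Hence the vertical cluster of the origin is almost surely finite and there is no percolation.

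\noindent\textbf{Supercritical direction ($\kappa>C_2$).} Here the envelope loses too much information, and I would use the SPDE scaling limit with killing, derived in the spirit of Mueller--Tribe \cite{mueller} and adapted to our discrete attrition model. Starting from the profile $f$ fixed in the introduction, the rescaled density $A\hat{\xi}_{\lfloor N^{2\alpha}\rfloor}$ converges to a limit profile that stays uniformly positive on $[-1,1]$ with probability close to $1$ once $\kappa$ is large. I would then declare a space-time block (horizontal size $N^{1+\alpha}$, vertical depth $N^{2\alpha}$) good when the density at its top is high enough that the Binomial$(\Theta(N^{4\alpha}),\epsilon)$ of vertical openings produces, on the next layer, a seed dominating a shifted copy of $f$; a Chernoff bound makes the good-block probability tend to $1$ as $\kappa\to\infty$. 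The resulting $1$-dependent oriented percolation on the renormalized lattice satisfies Durrett's criterion \cite{durrett}, producing an infinite cluster.

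\noindent\textbf{Main obstacle.} The hardest ingredient will be the supercritical scaling limit with uniform control. Tightness and martingale identification follow the Mueller--Tribe template, but one must (i) track the attrition term at leading order, since it provides the non-trivial $\kappa$-dependence of the limit profile and was precisely the reason for fixing $\alpha=1/5$, and (ii) control the correlation between the cluster of visited sites and the vertical edge configuration so that the Chernoff estimate for vertical openings inside each block is not invalidated by this dependence. Passing from weak convergence to the uniform-in-$N$ lower bounds needed to clear Durrett's threshold is the principal technical effort.
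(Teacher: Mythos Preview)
Your overall decomposition into a branching-domination argument for $\kappa<C_1$ and a block/oriented-percolation argument for $\kappa>C_2$ matches the paper. The supercritical half is essentially what the paper does, though your block dimensions are slightly off: the renormalized block spans a bounded number $O(\delta^{-5/2})$ of vertical layers, not $N^{2\alpha}$; on each layer the horizontal process is run only for a \emph{short} time $\delta^5 N^{2\alpha}$ so that it stays close to the heat semigroup (Lemma~\ref{diff real heat}, Corollary~\ref{spdedens}), and $\kappa$ enters solely through the vertical thinning, not through the horizontal limit profile. Also your worry (ii) about dependence between the visited set and the vertical edges is not an issue: those families are independent by construction.

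The genuine gap is in the subcritical half. You propose to dominate the true horizontal process by the branching random walk envelope and then bound the expected number of vertical offspring per seed. But for the envelope, the total progeny of a critical branching process started from one particle has infinite mean: one has $\P(\tilde T_k<\infty)\asymp 2^{-k}$, so $\sum_k 2^{2k}\P(\tilde T_k<\infty)=\infty$, and no amount of ``second-moment control of the heavy tail'' repairs this (the tail is too heavy for any moment above $1/2$). Truncating at time $N^{2\alpha}$ gives a lower bound, not the upper bound you need for non-percolation. The paper explicitly flags this: the envelope bound $\P(\tilde T_k<\infty)\le C2^{-k}$ is \emph{not enough} for Theorem~\ref{thm:sub}. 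What closes the argument is precisely the attrition you set aside: once the horizontal mass reaches order $N^{2/5}$, the previously visited sites form a killing zone dense enough that Proposition~\ref{extraprop} forces $\E[\hat X_{M_3 N^{2/5}}]<\delta$, which upgrades the decay to $\P(\hat T_{k_0+\log_2 M_1+r}<\infty\mid\hat T_{k_0+\log_2 M_1}<\infty)\le C8^{-r}$ (Proposition~\ref{prop 1/8}). That summable decay is what yields $\E|\mathcal C_0^0|\le LN^{2/5}$ and makes the layer-to-layer branching process subcritical for small $\kappa$. In short: the attrition is indispensable on \emph{both} sides of the theorem, not only the supercritical one.
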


\begin{remark}
(a) The critical value $\alpha=1/5$ can be guessed by standard coupling as in \cite{lalley}.  First, we build a critical branching random walk with the same initial conditions, namely $2\lfloor N^{2\alpha}\rfloor$ particles with at most one on each site distributed uniformly on $2\lfloor N^{1+\alpha}\rfloor$ sites in $[-1,1]_N$. At each time, particles of the branching random walk produce offsprings at their neighbourhoods following a Binomial distribution $\text{Bin}(2N,1/(2N))$. The branching random walk will finally become extinct as we know \cite{branching}. The existence of percolation is a meaningful problem if we introduce a vertical interaction. In the beginning, there are $O(N^{\alpha-1})$ particles at each site on average. Since the branching random walk is critical, i.e. the expectation of offspring is exactly $1$, this average behaviour will not change too much during the propagation.
Next, we colour the particles as red or blue according to they are alive or dead respectively. The attrition means that if a site has been visited, then it cannot be visited again. Initially, all the particles are red. The offspring of blue particles are blue and the choice of colour of red particles is as follows. If a site $x$ has been occupied  in the past, then the offsprings of red particles that are produced at $x$ become blue. The branching random walk will last for $O(N^{2\alpha})$ generations (ref. \cite{branching}). Up to extinction, the chance of dying for any particle is $O(N^{3\alpha-1})$. The total attrition at each generation is $O(N^{5\alpha-1})$. Hence if $\alpha=1/5$, then the total attrition per generation is $O(1)$.

(b) As already mentioned above, the original problem that motivated this paper can be formulated in terms of the existence of percolation for a corresponding Fortuin-Kasteleyn measure with shape parameter $q=2$ and edge probabilities of $\{\langle v_1,v_2\rangle\in E,v_1=(x,i),v_2=(y,j)\}$ to be
$$p(\langle v_1,v_2\rangle)=1-e^{-J_{\gamma}(x,y)}\indic_{\{\langle v_1,v_2\rangle\in E_h\}}-e^{\epsilon(\gamma)}\indic_{\{\langle v_1,v_2\rangle\in E_v\}}.$$
By the FKG inequality, the probability of percolation for $q=2$ is bounded from above by that when $q=1$ (product measure). As a consequence, for $\kappa$ sufficiently small, we conclude that there is no phase transition if $\epsilon(\gamma)= \kappa \gamma^{2/5}$ (much larger than $\kappa\gamma^{2/3}$ as conjectured in \cite{eulalia}), for all $\gamma$ small.

(c) The organization of this paper is as follows. We will provide necessary lemmas and use them to show the weak convergence of the dominating envelope in Section \ref{sec:envelope}.  Results related to the true horizontal process like the asymptotic density, Girsanov transformation and cumulated density are shown in Section \ref{sec:real}. The killing property of the attrition part helps us to consider the case when $\kappa<C_1$ in Subsection \ref{sec:sub}. With the properties of the true process, the oriented percolation construction is built up in Subsection \ref{sec:sup} and we can show the existence of percolation when $\kappa>C_2$.
\end{remark}

\section{The envelope process}
\label{sec:envelope}

Since the proof of our main theorem involves the scaling limit of the process $\hat\xi$, as in \cite{lalley} we are led to consider first the situation of the corresponding branching random walk, as our 'envelope process', and we first study its scaling limit in Theorem \ref{thm: brw}. This result should probably be contained in the literature, even if not stated exactly as convenient for the consideration of our true model in Section \ref{sec:real}; the ideas are contained in Mueller and Tribe \cite{mueller}.

Before studying the asymptotic behaviour of the process, we first study that of an envelope process. In this section, we consider the state function $\xi_n(\cdot):\Z/N^{1+\alpha}\rightarrow \Z_+$. The mechanism of this envelope process is as follows. The number of particles at site $x$ will increase by $1$ if one of its neighbours branches following $\text{Binomial}(2N,1/(2N))$ and then chooses $x$ uniformly among the $2N$ neighbours. It can be written as
$$\xi_{n+1}(x)=\sum_{y\sim x}\sum_{w=1}^{\xi_n(y)}\eta^w_{n+1}(y,x),$$ where $(\eta^w_{n+1}(y,x))_{w,n,y,x}$ is an i.i.d. sequence with distribution $\text{Bernoulli}(1/(2N))$.
The horizontal process $\hat{\xi}_n(\cdot):\Z/N^{1+\alpha}\rightarrow\{0,1\}$ analysed in Section \ref{sec:real} is dominated by this envelope process in two senses: $\hat{\xi}_n(\cdot)$ does not allow multiple particles at one site and any visited site cannot be visited again.
At the end of this section, we will show that the probability of multiple particles at one site is quite small, of order $O(N^{2(\alpha-1)})$ which is negligible when $\alpha<1$. 

The main result of this section regards the asymptotic behaviour (as $N\rightarrow\infty$) of the approximate density function of the dominating envelope process
$$A(\xi_{\lfloor tN^{2\alpha}\rfloor})(x)=\frac{1}{2N^{\alpha}}\sum_{y\sim x}\xi_{\lfloor tN^{2\alpha}\rfloor}(y)$$
extended to $\R$ as the linear interpolation of its values on $\Z/N^{1+\alpha}$. This is made precise in Theorem \ref{thm: brw} below.
\begin{remark}
The same interpolation is used when considering the approximate density of the process $\hat\xi$.
\end{remark}
Setting $\e_{\lambda}(x)=\e^{\lambda |x|}$ for $\lambda\in\R$, we define
$$\mathcal{C}=\left\{f:\R\rightarrow[0,\infty)\text{ continuous with }|f(x)e_{\lambda}(x)|\rightarrow0\text{ as }|x|\rightarrow\infty,\forall \lambda<0\right\},$$
to which we give the topology induced by the norms $(\|\cdot\|_{\lambda},\lambda<0)$,
where $$\| f\|_{\lambda}=\sup_x|f(x)e_{\lambda}(x)|.$$
In the following convergences (Theorem \ref{thm: brw} and Theorem \ref{thm real spde}), we consider the law of $A(\xi)$ or $A(\hat{\xi})$ in the space $D([0,\infty),\mathcal{C})$, the space of $\mathcal{C}$-valued paths equipped with Skorohod topology.

\begin{theorem}
\label{thm: brw}
Assume that as $N\rightarrow\infty$, $A(\xi_0)$ converges in $\mathcal{C}$ to a continuous function $f$ with compact support.
Then, $A(\xi_{\lfloor tN^{2\alpha}\rfloor})(x)$ converges in law to $u_t(x)$, which is the solution to one dimensional Dawson-Watanabe process:
\begin{align}\label{eqn: brw spde}
\begin{cases}
\frac{\partial u_t}{\partial t}=\frac{1}{6}\Delta u_t+\sqrt{u_t}\dot{W}(t,\cdot) \\
u_0=f,
\end{cases}
\end{align}
where $\Delta$ is the Laplacian operator acting in the spatial coordinates and $\dot{W}$ is the space-time white noise.
\end{theorem}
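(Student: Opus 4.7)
The plan is to follow the martingale-problem strategy of Mueller and Tribe \cite{mueller}, adapted to our discrete-time long-range branching random walk. For a smooth test function $\phi$ with rapid decay, I would obtain a Doob decomposition of $\langle A(\xi_{\lfloor tN^{2\alpha}\rfloor}),\phi\rangle$, identify the compensator and the predictable quadratic variation in the scaling limit, establish tightness in the weighted space $\mathcal{C}$, and conclude via uniqueness for the one-dimensional super-Brownian martingale problem.

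Since each particle at $y$ produces $\text{Binomial}(2N,1/(2N))$ offspring placed uniformly on $y+(\{-N,\ldots,N\}\setminus\{0\})$, one has $\mathbb{E}[\xi_{n+1}(x)\mid\mathcal{F}_n]=\frac{1}{2N}\sum_{y\sim x}\xi_n(y)$. A second-order Taylor expansion of $\phi$ in the rescaled variable, combined with $\sum_{k=1}^N k^2/N^3\to 1/3$, shows that the one-step conditional drift acts on smooth test functions as $\tfrac{1}{6}N^{-2\alpha}\Delta\phi$ plus lower-order terms; accumulating over $n\le tN^{2\alpha}$ steps yields the limit drift $\tfrac{1}{6}\int_0^t\langle u_s,\Delta\phi\rangle\,ds$. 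For the martingale part $M_n(\phi)$, independence across parents and across offspring yields, after summing,
$$\langle M(\phi)\rangle_{\lfloor tN^{2\alpha}\rfloor}\ \longrightarrow\ \int_0^t\langle u_s,\phi^2\rangle\,ds,$$
so the martingale central limit theorem identifies the limit as a continuous Gaussian martingale with this bracket, which is exactly the stochastic-integral term $\int_0^t\langle\sqrt{u_s}\,\dot W(s,\cdot),\phi\rangle\,ds$ in \eqref{eqn: brw spde}.

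For tightness in $D([0,\infty),\mathcal{C})$, tightness of each real-valued coordinate $\langle A(\xi_{\lfloor tN^{2\alpha}\rfloor}),\phi\rangle$ follows from Aldous--Rebolledo once one has uniform second-moment bounds on the drift and bracket increments derived above. To upgrade to tightness in the exponentially weighted topology on $\mathcal{C}$, I would bound $\mathbb{E}\bigl[\sum_y\xi_n(y)e^{\lambda|y|/N^{1+\alpha}}\bigr]$ uniformly in $n\le TN^{2\alpha}$ and $\lambda<0$ by applying the transition operator to $e^{\lambda|\cdot|}$ and invoking a Gronwall inequality; the compact support of $f$ together with criticality of the branching keeps this weighted mass controlled. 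Equicontinuity in $x$ follows from second-moment estimates for $\langle A(\xi_n),\phi(\cdot+h)-\phi\rangle$, exploiting the smoothing effect of the discrete averaging.

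Identification: any subsequential limit $u_t$ satisfies that
$$\langle u_t,\phi\rangle-\langle f,\phi\rangle-\tfrac{1}{6}\int_0^t\langle u_s,\Delta\phi\rangle\,ds$$
is a continuous martingale with quadratic variation $\int_0^t\langle u_s,\phi^2\rangle\,ds$ for every smooth compactly supported $\phi$. This is the standard martingale characterization of one-dimensional super-Brownian motion with diffusion coefficient $1/6$; its solution is unique in law and coincides with the strong solution of the SPDE \eqref{eqn: brw spde} by the Konno--Shiga/Reimers equivalence. The main obstacle I expect is the tightness in the exponentially weighted $\mathcal{C}$: although the initial configuration has compact support, a small fraction of particles may spread across distances of order $N^{1+\alpha}$ by time $tN^{2\alpha}$, so one must carefully propagate exponential-weighted moments through both the binomial branching and the uniform long-range displacement. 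This is the core technical ingredient to be adapted from \cite{mueller} to our discrete-time setting.
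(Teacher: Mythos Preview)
Your overall strategy---semimartingale decomposition against smooth test functions, identification of the limit via the martingale problem, and uniqueness for the one-dimensional super-Brownian SPDE---matches the paper's. The discrepancy is in the tightness step, where your outline underestimates what is required and does not describe the key device the paper (following \cite{mueller}) actually uses.

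Aldous--Rebolledo for each real-valued process $\langle A(\xi_{\lfloor\cdot N^{2\alpha}\rfloor}),\phi\rangle$, together with exponentially weighted mass bounds, yields tightness of the \emph{measure}-valued process $\nu^N_{\lfloor tN^{2\alpha}\rfloor}$ under the vague topology; it does not by itself give tightness of the \emph{density} $x\mapsto A(\xi_{\lfloor tN^{2\alpha}\rfloor})(x)$ in the weighted sup-norm space $\mathcal{C}$. Your proposed equicontinuity estimate via $\langle A(\xi_n),\phi(\cdot+h)-\phi\rangle$ is a distributional modulus and cannot control $\sup_x|A(\xi_n)(x+h)-A(\xi_n)(x)|$. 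What the paper does instead is a Green-function representation: inserting the time-reversed discrete heat kernel $\phi^N_k=\psi^x_{n-k}$ (where $\psi^x_j(y)=N^{1+\alpha}\P(S_{j+1}=y-x)$ for the underlying uniform step walk) into the decomposition kills the drift and produces the pointwise identity
\[
A(\xi_n)(x)=(\nu^N_0,\psi^x_n)+M_n(\psi^x_{n-\cdot}).
\]
From this one proves a Kolmogorov-type moment bound (Lemma~\ref{tight}),
\[
\E\bigl|\hat A(\xi_{\lfloor tN^{2\alpha}\rfloor})(x)-\hat A(\xi_{\lfloor sN^{2\alpha}\rfloor})(y)\bigr|^p\le C\,\e_{\lambda p}(x)\bigl(|x-y|^{p/4}+|t-s|^{p/4}+N^{-\alpha p/2}\bigr),
\]
by applying BDG and a suite of heat-kernel increment estimates $\|\psi^x_k-\psi^y_k\|_\lambda$, $\|\psi^y_k-\psi^y_l\|_\lambda$ (Lemma~\ref{psi}). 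This pointwise space--time moment bound is what delivers tightness in $D([0,\infty),\mathcal{C})$; the measure-level tightness (Lemma~\ref{measure density}) is then a consequence, not the route in. You correctly flag $\mathcal{C}$-tightness as the main obstacle, but the mechanism you sketch would not close; the missing idea is precisely this choice of test function that turns $A(\xi_n)(x)$ itself into a discrete stochastic integral against the heat kernel, so that its H\"older regularity can be read off from heat-kernel gradient bounds.
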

The idea of the proof is to write the mechanism as a martingale problem, then introduce a Green function representation (see (\ref{approx})) to simplify the approximate density. The tightness criteria in \cite{ek} can be applied to get the weak convergence. We will follow the blueprint of \cite{mueller} to show the tightness.

Before starting the proof, we first explain the notation used in the following sections. For $f,g$ functions on our discrete space $\Z/N^{1+\alpha}$, we write, whenever meaningful,
$$(f,g)=\frac{1}{N^{1+\alpha}}\sum_xf(x)g(x)$$
Similarly, for  $f,g$ defined in $\R$, we write
$$(f,g)=\int fgdx.$$
Define the discrete measure generated by $\xi_n$ as
$$\nu_n^N=\frac{1}{N^{2\alpha}}\sum_x\xi_n(x)\delta_x$$
and for a function $f$ and measure $\nu$, we write
$$(f,\nu)=\int fd\nu$$for the integral whenever it is well defined.
In Lemma \ref{measure density}, we will see that for any test function $f$ which is bounded and with compact support
$$(f,A\xi_n)-(f,\nu_n^N)\rightarrow 0 \text{ in }L^2.$$
We define the amplitude of a function around a neighbourhood as 
\begin{equation}
\label{eqn:amp}
D(f,\delta)(x)=\sup \{ |f(y)-f(x)|:|y-x|\leq \delta \}.
\end{equation}

\subsection{Martingale Problem}
\label{green fun}

Suppose $\xi_0(x)$ is deterministic and with a finite (depending on $N$) support.
Rewriting the mechanism of $\xi_n(x)$, we have
\begin{equation}
\label{brw mech}
\begin{split}
\xi_{n+1}(x) &= \sum_{y\sim x}\sum_{w=1}^{\xi_n(y)}\left(\eta^w_{n+1}(y,x)-\frac{1}{2N}\right)+\frac{1}{2N}\sum_{y\sim x}\xi_n(y)\\
 &=\sum_{y\sim x}\sum_{w=1}^{\xi_n(y)}\left(\eta^w_{n+1}(y,x)-\frac{1}{2N}\right)+\frac{1}{2N}\sum_{y\sim x}(\xi_n(y)-\xi_n(x))+\xi_n(x).
\end{split}
\end{equation}
The first term will contribute to the space-time white noise part and the second term will contribute to the Laplacian in the SPDE.

Take discrete test function $\phi^N(k,x)$ for $x\in\Z/N^{1+\alpha}$ ($\phi^N_k(x)=\phi^N(k,x)$). $\phi^N(k,x):\mathbb{N}\times\Z/N^{1+\alpha}$ satisfying the following conditions:
\begin{equation}
\label{test fun cond}
\begin{split}
&\sum_{k=1}^{\lfloor TN^{2\alpha}\rfloor}(|\phi^N_k-\phi^N_{k-1}|,1)<\infty, \\
&\frac{1}{\lfloor TN^{2\alpha}\rfloor}\sum_{k=1}^{\lfloor TN^{2\alpha}\rfloor}(|\phi^N_k|+|\phi^N_k|^2,1)<\infty.
\end{split}
\end{equation}

Summation by parts and (\ref{brw mech}) give
\begin{align*}
(\nu_n^N,\phi^N_n) &= \frac{1}{N^{2\alpha}}\sum_x \xi_n(x)\phi^N_n(x)\\
 &=\frac{1}{N^{2\alpha}}\sum_x \xi_n(x)(\phi^N_n(x)-\phi^N_{n-1}(x))+\frac{1}{N^{2\alpha}}\sum_x\xi_n(x)\phi^N_{n-1}(x)\\
 &=(\nu_n^N,\phi^N_n-\phi^N_{n-1})+(\nu_{n-1}^N,\phi^N_{n-1})+\frac{1}{N^{2\alpha}}\sum_x\frac{1}{2N}\sum_{y\sim x}(\xi_{n-1}(y)-\xi_{n-1}(x))\phi^N_{n-1}(x)\\
 &~~~~+\frac{1}{N^{2\alpha}}\sum_x\sum_{y\sim x}\sum_{w=1}^{\xi_{n-1}(y)}\phi^N_{n-1}(x)\left( \eta^w_n(y,x)-\frac{1}{2N} \right),
\end{align*}
where we use the decomposition (\ref{brw mech}) in the last equality.
Denote $\Delta_D f(x)=\frac{N^{2\alpha}}{2N}\sum_{y\sim x}(f(y)-f(x))$. By summation by parts into the second term again, we can obtain
\begin{align*}
(\nu_n^N,\phi^N_n)-(\nu_{n-1}^N,\phi^N_{n-1}) &= (\nu_n^N,\phi^N_n-\phi^N_{n-1})+(\nu_{n-1}^N,N^{-2\alpha}\Delta_D \phi^N_{n-1})+\\
   &~~~~+\frac{1}{N^{2\alpha}}\sum_x\sum_{y\sim x}\sum_{w=1}^{\xi_{n-1}(y)}\phi^N_{n-1}(x)\left( \eta^w_n(y,x)-\frac{1}{2N} \right)\\
   &=(\nu_n^N,\phi^N_n-\phi^N_{n-1})+(\nu_{n-1}^N,N^{-2\alpha}\Delta_D\phi^N_{n-1})+d_n(\phi^N),
\end{align*}
where
\begin{equation}
\label{eqn:d_n}
d_n(\phi^N)=\frac{1}{N^{2\alpha}}\sum_x\sum_{y\sim x}\sum_{w=1}^{\xi_{n-1}(y)}\phi^N_{n-1}(x)\left( \eta^w_n(y,x)-\frac{1}{2N} \right).
\end{equation}

Summing up $n$ from $1$ to $m$, we get a semimartingale decomposition
\begin{equation}\label{decomp}
(\nu_m^N,\phi^N_m)-(A\xi_0,\phi^N_0)=(\nu_m^N,\phi^N_m-\phi^N_{m-1})+\sum_{i=1}^{m-1} (\nu_{i}^N,\phi^N_i-\phi^N_{i-1}+N^{-2\alpha}\Delta_D\phi_{i})+M_{m}(\phi^N),
\end{equation}
where we use the identity $(\nu^N,N^{-2\alpha}\Delta_D \phi^N+\phi^N)=(A\xi,\phi^N)$.

$M_m(\phi^N)=\sum_{k=1}^{m} d_{k}(\phi^N)$ is a martingale with square variation
\begin{equation}
\label{bracket}
\begin{split}
\langle M(\phi^N)\rangle_m &= \sum_{k=1}^m\E_{k-1}d_k^2\\
&=\sum_{k=1}^m\frac{1}{2N^{1+4\alpha}}\sum_x\sum_{y\sim x}\xi_{k-1}(y)(\phi^N_{k-1})^2(x)\left(1-\frac{1}{2N}\right)\\
&\leq \sum_{k=1}^m\frac{\|\phi^N_{k-1}\|_0}{2N^{1+4\alpha}}\sum_x\sum_{y\sim x}\xi_{k-1}(y)\phi^N_{k-1}(x)\\
&=\sum_{k=1}^m \frac{\|\phi^N_{k-1}\|_0}{N^{2\alpha}}(A\xi_{k-1},\phi^N_{k-1}).
\end{split}
\end{equation}
For any $x\in \Z/N^{1+\alpha}$, let $\psi_i^z(x)\geq 0$ be the solution to
\begin{align*}
\begin{cases}
\psi_i^z-\psi_{i-1}^z=N^{-2\alpha}\Delta_D \psi_{i-1}^z \\
\psi_0^z(x)=\frac{N^{\alpha}}{2}I(x\sim z).
\end{cases}
\end{align*}
The solution of this equation is $\psi_n^z=N^{1+\alpha}\P(S_{n+1}=x-z)$, where $S_n=\sum_{i=1}^n Y_i$, with $(Y_i)$ i.i.d. uniformly distributed on $\{i/N^{1+\alpha},|i|\leq N\}$.

$\Delta_D$ can be seen as the generator of this symmetric random walk $S_n$ with steps of variance $\frac{c_3}{3}N^{-2\alpha}$ and $\E [Y^4]=\frac{c_4}{5N^{4\alpha}}$, where $c_3(N),c_4(N)\rightarrow 1$. $\psi_t^z(x)$ behaves asymptotically as $p(\frac{c_3t}{3N^{2\alpha}},z-x)$ (ref. Lemma \ref{lem: diff}),where $p(t,x)$ is the Brownian transition probability.

We apply (\ref{decomp}) with test function $\phi^N_k=\psi_{n-k}$ for $k\leq n-1$, so that the first drift term vanishes and $(\nu^N_n,\phi^N_n)=(\nu^N_n,\psi_0^x)=A(\xi_n)(x)$. Thus we obtain an approximation
\begin{equation}
\label{approx}
A(\xi_n)(x)=(\nu^N_0,\psi_n^x)+M_n(\psi_{n-\cdot}^x),
\end{equation}

where 
$M_n(\psi_{n-\cdot}^x) =\sum_{k=1}^nd_k(\psi_{n-k}^x)$ and $d_k(\phi^N)$ is as in (\ref{eqn:d_n}).
Proving the tightness of $A(\xi_{\lfloor tN^{2\alpha}\rfloor})$ is equivalent to prove that of $M_{\lfloor tN^{2\alpha}\rfloor}$. Some estimations on $\psi_n$ and the moments of $A(\xi_n)$ used to show the tightness are stated in the appendix \ref{appendix}. We skip the proofs which are very similar to those in \cite{mueller}.

\subsection{Tightness}
\label{tightness}
In this section, we assume an initial condition so that the linear interpolation of $A(\xi_0)$ converges to $f$ under $\|\cdot\|_{-\lambda}$ for any $\lambda>0$. To get the centred approximated density, by (\ref{approx}), let
$$\hat{A}(\xi_n)(x)=A(\xi_n)(x)-(\nu_0^N,\psi_n^x).$$
\begin{lemma}
\label{tight}
For $0\leq s\leq t\leq T$, $x,y\in \mathbb{Z}/N^{1+\alpha}$, $|t-s|\leq 1$, $|x-y|\leq 1$, $\lambda>0$ and $p\geq 2$,
\begin{equation}
\label{eqn: tightness}
\E |\hat{A}(\xi_{\lfloor tN^{2\alpha}\rfloor})(x)-\hat{A}(\xi_{\lfloor sN^{2\alpha}\rfloor})(y)|^p\leq C(\lambda,p,f,T)\e_{\lambda p}(x)\left( |x-y|^{\frac{p}{4}}+|t-s|^{\frac{p}{4}}+N^{-\frac{\alpha p}{2}} \right).
\end{equation}
\end{lemma}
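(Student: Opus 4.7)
\emph{Proof plan.} The starting point is the martingale representation (\ref{approx}),
$$\hat A(\xi_n)(x)=M_n(\psi^x_{n-\cdot})=\sum_{k=1}^n d_k(\psi^x_{n-k}),$$
so the increment is itself a martingale sum. Assuming $s\le t$ and setting $m=\lfloor sN^{2\alpha}\rfloor$, $n=\lfloor tN^{2\alpha}\rfloor$, I split
$$\hat A(\xi_n)(x)-\hat A(\xi_m)(y) = \sum_{k=1}^{m} d_k\bigl(\psi^x_{n-k}-\psi^y_{m-k}\bigr) + \sum_{k=m+1}^{n} d_k(\psi^x_{n-k}) =: M^{(1)} + M^{(2)},$$
and apply the Burkholder--Davis--Gundy inequality to each piece to reduce the $p$-th moment to a $(p/2)$-th moment of the predictable quadratic variation.

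Applying the bound in (\ref{bracket}) to the pertinent absolute-valued test functions gives
\begin{align*}
\langle M^{(1)}\rangle &\le \sum_{k=1}^{m} \frac{\|\psi^x_{n-k}-\psi^y_{m-k}\|_0}{N^{2\alpha}}\bigl(A\xi_{k-1},\,|\psi^x_{n-k}-\psi^y_{m-k}|\bigr),\\
\langle M^{(2)}\rangle &\le \sum_{k=m+1}^{n} \frac{\|\psi^x_{n-k}\|_0}{N^{2\alpha}}\bigl(A\xi_{k-1},\psi^x_{n-k}\bigr),
\end{align*}
so it remains to control the kernels and the density. The discrete heat-kernel estimates of Lemma \ref{lem: diff} yield sup-norm bounds on $\psi^z_n$ and space-time Hölder bounds on the increments $\psi^x_{n-k}-\psi^y_{m-k}$, with the explicit $N^{-\alpha}$ lattice correction and with Gaussian decay centred at $x$ (or $y$). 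Combining these with the moment bounds on $(A\xi_{k-1},h)$ from the appendix, applying Jensen's inequality inside $\E\langle M^{(i)}\rangle^{p/2}$, and converting the resulting $k$-sum into the convergent integral $\int_0^t (t-r)^{-1/2}dr$, produces the three contributions $|x-y|^{p/4}$, $|t-s|^{p/4}$, and $N^{-\alpha p/2}$ on the right-hand side. The factor $e_{\lambda p}(x)$ arises from absorbing the Gaussian tails of $\psi^x$ against the exponentially weighted norm of the initial data $f$ (which has compact support).

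The main obstacle is the kernel step: one must establish discrete space-time Hölder bounds on $\psi^z_n$ with the sharp $N^{-\alpha}$ lattice correction and in a form compatible with the weighted norms $\|\cdot\|_\lambda$. The quartic exponent $p/4$ is obtained by interpolating between the $L^\infty$ and $L^1$ bounds on $\psi^x_{n-k}-\psi^y_{m-k}$; this interpolation has to be executed in the weighted setting while preserving the Gaussian tails, and care must also be taken that the short piece $M^{(2)}$, whose quadratic variation is controlled only by $|t-s|+N^{-2\alpha}$, contributes no worse than $|t-s|^{p/4}+N^{-\alpha p/2}$ after BDG. The relevant kernel lemmas are stated in the appendix and their proofs are omitted as being essentially those of Mueller--Tribe \cite{mueller} adapted to the discrete-time setting.
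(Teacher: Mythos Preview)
Your proposal is correct and follows essentially the same route as the paper: BDG applied to the martingale representation (\ref{approx}), with the quadratic variation controlled via the kernel H\"older estimates of Lemma \ref{psi}(c)--(e) and the density moment bounds of Lemma \ref{moment}. The only organisational difference is that the paper first separates the increment into a pure space difference (at time $n$) and a pure time difference (at point $y$) and then splits the latter into your $M^{(1)}$ and $M^{(2)}$, whereas you treat the combined kernel increment $\psi^x_{n-k}-\psi^y_{m-k}$ directly in $M^{(1)}$; since this combined increment is bounded by the triangle inequality using parts (d) and (e) of Lemma \ref{psi}, the two arguments coincide after that step, and in both cases the weight $e_{\lambda p}(x)$ enters through the $\|\cdot\|_\lambda$ bound on the kernel paired with $(A\xi_{k-1}e_{-\lambda},\psi^x+\psi^y)$ and Lemma \ref{psi}(b) rather than through the unweighted $\|\cdot\|_0$ you wrote.
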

\begin{proof}
We decompose this difference into space difference $\hat{A}(\xi_{\lfloor tN^{2\alpha}\rfloor})(x)-\hat{A}(\xi_{\lfloor tN^{2\alpha}\rfloor})(y)$ and time difference $\hat{A}(\xi_{\lfloor tN^{2\alpha}\rfloor})(y)-\hat{A}(\xi_{\lfloor sN^{2\alpha}\rfloor})(y)$. First, we deal with the space difference. The  Burkholder-Davis-Gundy (BDG) inequality (discrete version recalled in the appendix \ref{appendix}) gives that
\begin{align*}
\E |\hat{A}(\xi_{\lfloor tN^{2\alpha}\rfloor})(x)-\hat{A}(\xi_{\lfloor tN^{2\alpha}\rfloor})(y)|^p\leq \E \langle M(\psi_{\lfloor tN^{2\alpha}\rfloor-\cdot}^x-\psi_{\lfloor tN^{2\alpha}\rfloor-\cdot}^y)\rangle_{\lfloor tN^{2\alpha}\rfloor}^{\frac{p}{2}}.
\end{align*}
The constants $C(\lambda,p,f,T)$ in the following proof are generic constants. With a similar argument as in (\ref{bracket}),

\begin{align*}
&~~~~\langle M(\psi_{\lfloor tN^{2\alpha}\rfloor-\cdot}^x-\psi_{\lfloor tN^{2\alpha}\rfloor-\cdot}^y)\rangle_{\lfloor tN^{2\alpha}\rfloor} \\
&\leq \sum_{k=1}^{\lfloor tN^{2\alpha}\rfloor}\frac{\|\psi_{\lfloor tN^{2\alpha}\rfloor-k+1}^x-\psi_{\lfloor tN^{2\alpha}\rfloor-k+1}^y\|_{\lambda}}{N^{2\alpha}}\left(A\xi_{k-1}e_{-\lambda},\psi_{\lfloor tN^{2\alpha}\rfloor-k+1}^x+\psi_{\lfloor tN^{2\alpha}\rfloor-k+1}^y\right)
\end{align*}
By Lemma \ref{psi} (d),
\begin{align*}
&~~~~\langle M(\psi_{\lfloor tN^{2\alpha}\rfloor-\cdot}^x-\psi_{\lfloor tN^{2\alpha}\rfloor-\cdot}^y)\rangle_{\lfloor tN^{2\alpha}\rfloor}\\
&\leq \sum_{k=1}^{\lfloor tN^{2\alpha}\rfloor}\left((\lfloor tN^{2\alpha}\rfloor-k+1)^{-\frac{1}{2}}N^{-\alpha}|x-y|^{\frac{1}{2}}+N^{-\frac{3\alpha}{2}}k^{-\frac{3}{4}} \right)\cdot \\
&\qquad \qquad \qquad \qquad \qquad \qquad \qquad \qquad \left(A\xi_{k-1}e_{-\lambda},\psi_{\lfloor tN^{2\alpha}\rfloor-k+1}^x+\psi_{\lfloor tN^{2\alpha}\rfloor-k+1}^y\right).
\end{align*}

By using Lemma \ref{psi} (b) and Lemma \ref{moment} (c),
\begin{equation*}
\begin{split}
&~~~~\E |\hat{A}(\xi_{\lfloor tN^{2\alpha}\rfloor})(x)-\hat{A}(\xi_{\lfloor tN^{2\alpha}\rfloor})(y)|^p \\
&\leq C(\lambda,p,f,T)\e_{\lambda p}(x)\left( \sum_{k=1}^{\lfloor tN^{2\alpha}\rfloor}(\lfloor tN^{2\alpha}\rfloor-k+1)^{-\frac{1}{2}}N^{-\alpha}|x-y|^{\frac{1}{2}}+N^{-\frac{3\alpha}{2}}k^{-\frac{3}{4}} \right)^{\frac{p}{2}}.
\end{split}
\end{equation*}
It is easily seen that
\begin{equation}
\label{space}
\E |\hat{A}(\xi_{\lfloor tN^{2\alpha}\rfloor})(x)-\hat{A}(\xi_{\lfloor tN^{2\alpha}\rfloor})(y)|^p\leq C(\lambda,p,f,T)\e_{\lambda p}(x)\left( |x-y|^{\frac{p}{4}}+N^{-\frac{\alpha p}{2}} \right).
\end{equation}
For the time difference,
\begin{align*}
&~~~~\hat{A}(\xi_{\lfloor tN^{2\alpha}\rfloor})(y)-\hat{A}(\xi_{\lfloor sN^{2\alpha}\rfloor})(y) \\
&= M_{\lfloor tN^{2\alpha}\rfloor}(\psi^y_{\lfloor tN^{2\alpha}\rfloor-\cdot}) - M_{\lfloor sN^{2\alpha}\rfloor}(\psi^y_{\lfloor sN^{2\alpha}\rfloor-\cdot})\\
&=\frac{1}{N^{2\alpha}}\sum_{k=1}^{\lfloor tN^{2\alpha}\rfloor}\sum_x\sum_{z\sim x}\sum_{w=1}^{\xi_{k-1}(z)}\psi^y_{\lfloor tN^{2\alpha}\rfloor-k+1}(x)\left( \eta_k^w(z,x)-\frac{1}{2N} \right)\\
&~~~~-\frac{1}{N^{2\alpha}}\sum_{k=1}^{\lfloor sN^{2\alpha}\rfloor}\sum_x\sum_{z\sim x}\sum_{w=1}^{\xi_{k-1}(z)}\psi^y_{\lfloor sN^{2\alpha}\rfloor-k+1}(x)\left( \eta_k^w(z,x)-\frac{1}{2N} \right)\\
&=\frac{1}{N^{2\alpha}}\sum_{k=1}^{\lfloor sN^{2\alpha}\rfloor}\sum_x\sum_{z\sim x}\sum_{w=1}^{\xi_{k-1}(z)}(\psi^y_{\lfloor tN^{2\alpha}\rfloor-k+1}(x)-\psi^y_{\lfloor sN^{2\alpha}\rfloor-k+1}(x))\left( \eta_k^w(z,x)-\frac{1}{2N} \right)\\
&~~~~+\frac{1}{N^{2\alpha}}\sum_{k=\lfloor sN^{2\alpha}\rfloor+1}^{\lfloor tN^{2\alpha}\rfloor}\sum_x\sum_{z\sim x}\sum_{w=1}^{\xi_{k-1}(z)}\psi^y_{\lfloor tN^{2\alpha}\rfloor-k+1}(x)\left( \eta_k^w(z,x)-\frac{1}{2N} \right)\\
&= M^{(1)}_{\lfloor sN^{2\alpha}\rfloor} + \left( M^{(2)}_{\lfloor tN^{2\alpha}\rfloor} - M^{(2)}_{\lfloor sN^{2\alpha}\rfloor}\right),
\end{align*}

where the two martingales are
$$M^{(1)}_{\lfloor sN^{2\alpha}\rfloor} = \frac{1}{N^{2\alpha}}\sum_{k=1}^{\lfloor sN^{2\alpha}\rfloor}\sum_x\sum_{z\sim x}\sum_{w=1}^{\xi_{k-1}(z)}(\psi^y_{\lfloor tN^{2\alpha}\rfloor-k+1}(x)-\psi^y_{\lfloor sN^{2\alpha}\rfloor-k+1}(x))\left( \eta_k^w(z,x)-\frac{1}{2N} \right)$$
and 
$$M^{(2)}_{\lfloor sN^{2\alpha}\rfloor} = \frac{1}{N^{2\alpha}}\sum_{k=1}^{\lfloor sN^{2\alpha}\rfloor}\sum_x\sum_{z\sim x}\sum_{w=1}^{\xi_{k-1}(z)}\psi^y_{\lfloor tN^{2\alpha}\rfloor-k+1}(x)\left( \eta_k^w(z,x)-\frac{1}{2N} \right).$$

For $M^{(1)}_{\lfloor sN^{2\alpha}\rfloor}$, we use the similar argument as (\ref{bracket}), and get
\begin{align*}
\langle M^{(1)}\rangle_{\lfloor sN^{2\alpha}\rfloor} &\leq \sum_{k=1}^{\lfloor sN^{2\alpha}\rfloor} \frac{\| \psi^y_{\lfloor tN^{2\alpha}\rfloor-k+1}-\psi^y_{\lfloor sN^{2\alpha}\rfloor-k+1}\|_{\lambda}}{N^{2\alpha}}\left(A\xi_{k-1}e_{-\lambda},\psi^y_{\lfloor tN^{2\alpha}\rfloor-k+1}+\psi^y_{\lfloor tN^{2\alpha}\rfloor-k+1}\right)\\
&\leq \sum_{k=1}^{\lfloor sN^{2\alpha}\rfloor}\left( N^{-\frac{\alpha}{2}}(\lfloor sN^{2\alpha}\rfloor-k+1)^{-\frac{3}{4}}|t-s|^{\frac{1}{2}}+N^{-\frac{3\alpha}{2}}(\lfloor sN^{2\alpha}\rfloor-k+1)^{-\frac{3}{4}} \right)\\
&\qquad \qquad \qquad \qquad \qquad \qquad \qquad \qquad \qquad \qquad \cdot\left(A\xi_{k-1}e_{-\lambda},\psi_{\lfloor tN^{2\alpha}\rfloor-k+1}^y+\psi_{\lfloor sN^{2\alpha}\rfloor-k+1}^y\right).
\end{align*}
By BDG inequality,
\begin{align*}
\E \left| M^{(1)}_{\lfloor sN^{2\alpha}\rfloor}\right|^p &\leq \E\langle M^{(1)}\rangle^{\frac{p}{2}}_{\lfloor sN^{2\alpha}\rfloor}\\
&\leq C(p)\sum_{k=1}^{\lfloor sN^{2\alpha}\rfloor}\left( N^{-\frac{\alpha}{2}}(\lfloor sN^{2\alpha}\rfloor-k+1)^{-\frac{3}{4}}|t-s|^{\frac{1}{2}}+N^{-\frac{3\alpha}{2}}(\lfloor sN^{2\alpha}\rfloor-k+1)^{-\frac{3}{4}} \right)^{\frac{p}{2}}\\
&\qquad \qquad \qquad \qquad \qquad \qquad \qquad \qquad \qquad \qquad \cdot\E\left(A\xi_{k-1}e_{-\lambda},\psi_{\lfloor tN^{2\alpha}\rfloor-k+1}^y+\psi_{\lfloor sN^{2\alpha}\rfloor-k+1}^y\right)^{\frac{p}{2}}.
\end{align*} 
Writing $e_{-\frac{\lambda p}{2}}=e_{-\frac{3\lambda p}{2}}e_{\lambda p}$ and implementing Lemma \ref{moment} (c) give
$$\|\E(A^{p/2}(\xi_{k-1}))\|_{-\frac{3\lambda p}{2}}\leq C(\lambda,p,f,T).$$
Together with Lemma \ref{psi} (b), we get
\begin{align*}
\E\left(A\xi_{k-1}e_{-\lambda},\psi_{\lfloor tN^{2\alpha}\rfloor-k+1}^y+\psi_{\lfloor sN^{2\alpha}\rfloor-k+1}^y\right)^{\frac{p}{2}}&\leq \E\left( (A^{p/2}(\xi_{k-1}))e_{-\frac{\lambda p}{2}},\psi_{\lfloor tN^{2\alpha}\rfloor-k+1}^y+\psi_{\lfloor sN^{2\alpha}\rfloor-k+1}^y\right)\\
&\leq C(\lambda,p,T)e_{\lambda p}(y).
\end{align*}
Hence,
\begin{equation}\label{time1}
\begin{split}
\E \left| M^{(1)}_{\lfloor sN^{2\alpha}\rfloor}\right|^p&\leq C(\lambda,p,f,T)\e_{\lambda p}(y)\left( \sum_{k=1}^{\lfloor sN^{2\alpha}\rfloor}N^{-\frac{\alpha}{2}}(\lfloor sN^{2\alpha}\rfloor-k+1)^{-\frac{3}{4}}|t-s|^{\frac{1}{2}}+N^{-\frac{3\alpha}{2}}(\lfloor tN^{2\alpha}\rfloor-k+1)^{-\frac{3}{4}} \right)^{\frac{p}{2}}\\
&\leq C(\lambda,p,f,T)\e_{\lambda p}(y) \left( |t-s|^{\frac{p}{4}}+N^{-\frac{\alpha p}{2}} \right),
\end{split}
\end{equation}
where the second inequality is because of the fact that 
$$\sum_{k=1}^{\lfloor sN^{2\alpha}\rfloor}(\lfloor sN^{2\alpha}\rfloor-k+1)^{-\frac{3}{4}}\leq C(T)N^{\frac{\alpha}{2}}.$$
For $M^{(2)}_{\lfloor tN^{2\alpha}\rfloor} - M^{(2)}_{\lfloor sN^{2\alpha}\rfloor}$,
\begin{align*}
\E \left| M^{(2)}_{\lfloor tN^{2\alpha}\rfloor} - M^{(2)}_{\lfloor sN^{2\alpha}\rfloor} \right|^p\leq \E \left(\langle M^{(2)} \rangle_{\lfloor tN^{2\alpha}\rfloor}-\langle M^{(2)}\rangle_{\lfloor sN^{2\alpha}\rfloor}  \right)^{\frac{p}{2}}.
\end{align*}
Similar as the argument in (\ref{bracket}),
\begin{align*}
\langle M^{(2)} \rangle_{\lfloor tN^{2\alpha}\rfloor}-\langle M^{(2)}\rangle_{\lfloor sN^{2\alpha}\rfloor} &\leq \sum_{k=\lfloor sN^{2\alpha}\rfloor+1}^{\lfloor tN^{2\alpha}\rfloor}\frac{\| \psi^y_{\lfloor tN^{2\alpha}\rfloor-k+1} \|_{\lambda}}{N^{2\alpha}}(A\xi_{k-1}e_{-\lambda},\psi_{\lfloor tN^{2\alpha}\rfloor-k+1})\\
&\leq \sum_{k=\lfloor sN^{2\alpha}\rfloor+1}^{\lfloor tN^{2\alpha}\rfloor}\left( N^{-\alpha}(\lfloor tN^{2\alpha}\rfloor-k+1)^{-\frac{1}{2}} \right)(A\xi_{k-1}e_{-\lambda},\psi^y_{\lfloor tN^{2\alpha}\rfloor-k+1}).
\end{align*}
Thanks again to Lemma \ref{psi} (b) and Lemma \ref{moment} (c),
\begin{equation}\label{time2}
\begin{split}
\E \left| M^{(2)}_{\lfloor tN^{2\alpha}\rfloor} - M^{(2)}_{\lfloor sN^{2\alpha}\rfloor} \right|^p &\leq C(\lambda,p,f,T)\e_{\lambda p}(y)\left(  \sum_{k=\lfloor sN^{2\alpha}\rfloor+1}^{\lfloor tN^{2\alpha}\rfloor} N^{-\alpha}(\lfloor tN^{2\alpha}\rfloor-k+1)^{-\frac{1}{2}} \right)^{\frac{p}{2}}\\
&\leq C(\lambda,p,f,T)\e_{\lambda p}(y)|t-s|^{p/4}.
\end{split}
\end{equation}
Summarising (\ref{space})(\ref{time1})(\ref{time2}), we can get (\ref{eqn: tightness}).
\end{proof}
Tightness of $\left\{A(\xi_{\lfloor tN^{2\alpha}\rfloor}),N\geq 1\right\}$ follows from Lemma \ref{tight}.

\begin{lemma}\label{measure density}
For $\phi:\mathbb{Z}/N^{1+\alpha}\rightarrow [0,\infty)$ and $\lambda>0$,
\begin{align*}
|(\nu_k^N,\phi)-(A(\xi_k),\phi)|\leq \|D(\phi,N^{-\alpha})\|_{\lambda}(\nu_k^N,\e_{-\lambda}),
\end{align*}
with $D(\phi,N^{-\alpha})$ as defined in (\ref{eqn:amp}).
\end{lemma}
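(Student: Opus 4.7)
The plan is to unfold the definitions, swap the order of summation so that the weight $\xi_k$ sits outside, and then use the trivial pointwise estimate provided by $D(\phi,N^{-\alpha})$. Concretely, I would start from
\begin{equation*}
(A(\xi_k),\phi)=\frac{1}{N^{1+\alpha}}\sum_x A(\xi_k)(x)\,\phi(x)=\frac{1}{2N^{1+2\alpha}}\sum_x\sum_{y\sim x}\xi_k(y)\phi(x),
\end{equation*}
and use the symmetry of the relation $y\sim x$ together with the fact that each $y\in\Z/N^{1+\alpha}$ has exactly $2N$ neighbours to rewrite this as
\begin{equation*}
(A(\xi_k),\phi)=\frac{1}{N^{2\alpha}}\sum_y\xi_k(y)\cdot\frac{1}{2N}\sum_{x\sim y}\phi(x).
\end{equation*}

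Since $(\nu_k^N,\phi)=\frac{1}{N^{2\alpha}}\sum_y\xi_k(y)\phi(y)$, one can insert $\phi(y)=\phi(y)\cdot\frac{1}{2N}\sum_{x\sim y}1$ and subtract to obtain the exact identity
\begin{equation*}
(A(\xi_k),\phi)-(\nu_k^N,\phi)=\frac{1}{N^{2\alpha}}\sum_y\xi_k(y)\cdot\frac{1}{2N}\sum_{x\sim y}\bigl(\phi(x)-\phi(y)\bigr).
\end{equation*}
Because $y\sim x$ means $|x-y|\le N^{-\alpha}$ in the rescaled space, each term $|\phi(x)-\phi(y)|$ is bounded by $D(\phi,N^{-\alpha})(y)$ from the definition \eqref{eqn:amp}. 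Taking absolute values inside the sum gives
\begin{equation*}
|(A(\xi_k),\phi)-(\nu_k^N,\phi)|\le\frac{1}{N^{2\alpha}}\sum_y\xi_k(y)\,D(\phi,N^{-\alpha})(y).
\end{equation*}

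Finally, I would use the definition $\|D(\phi,N^{-\alpha})\|_\lambda=\sup_y|D(\phi,N^{-\alpha})(y)\,e_\lambda(y)|$ to bound pointwise
\begin{equation*}
D(\phi,N^{-\alpha})(y)\le\|D(\phi,N^{-\alpha})\|_\lambda\,\e_{-\lambda}(y),
\end{equation*}
and pull the sup out of the sum, recognising the remaining expression as $(\nu_k^N,\e_{-\lambda})$. This yields the claimed inequality. There is no genuine obstacle here; the only point requiring a small amount of care is the swap of summation indices and the verification that $\#\{x:x\sim y\}=2N$ for every $y$, so that the extra factor $1/(2N)$ produces an honest average against which $\phi(y)$ can be compared. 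The nonnegativity of $\phi$ plays no role in the argument beyond making the statement natural, since only $\phi(x)-\phi(y)$ enters the final bound.
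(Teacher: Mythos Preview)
Your argument is correct and follows essentially the same route as the paper's proof: expand $(A(\xi_k),\phi)$, add and subtract $\phi(y)$ inside the double sum, bound $|\phi(x)-\phi(y)|$ by $D(\phi,N^{-\alpha})(y)$, and then pull out $\|D(\phi,N^{-\alpha})\|_\lambda$ against $\e_{-\lambda}$. The only cosmetic difference is that you make the swap of summation and the count $\#\{x:x\sim y\}=2N$ explicit, whereas the paper absorbs this directly into the identity $\frac{1}{2N^{1+2\alpha}}\sum_x\sum_{y\sim x}\xi_k(y)\phi(y)=(\nu_k^N,\phi)$.
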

\begin{proof}
\begin{align*}
(A(\xi_k),\phi) &= \frac{1}{N^{1+\alpha}}\sum_x A(\xi_k)(x)\phi(x)\\
&=\frac{1}{2N^{1+2\alpha}}\sum_x\sum_{y\sim x}\xi_k(y)\phi(x)\\
&=\frac{1}{2N^{1+2\alpha}}\sum_x\sum_{y\sim x}\xi_k(y)(\phi(x)-\phi(y))+(\nu_k^N,\phi).
\end{align*}
Therefore,
\begin{align*}
|(\nu_k^N,\phi)-(A(\xi_k),\phi)| &\leq \frac{1}{2N^{1+2\alpha}}\sum_x\sum_{y\sim x}\xi_k(y)D(\phi,N^{-\alpha})(y)\\
&=(\nu_k^N,D(\phi,N^{-\alpha}))\\
&\leq \|D(\phi,N^{-\alpha})\|_{\lambda}(\nu_k^N,\e_{-\lambda}).
\end{align*}
\end{proof}
Lemma \ref{measure density} together with (a) of Lemma \ref{moment} give that
\begin{align*}
\E\left(\sup_{1\leq k\leq \lfloor tN^{2\alpha}\rfloor}\|(\nu_k^N,\phi)-(A(\xi_k),\phi)\|^p \right)\leq C(\lambda,p,f,T)\|D(\phi,N^{-\alpha})\|_{\lambda}^p.
\end{align*}
This implies the tightness of $\left\{(\nu^N_{\lfloor tN^{2\alpha}\rfloor},\phi),N\geq 1\right\}$ for each such test function and therefore the tightness of $\left\{\nu^N_{\lfloor tN^{2\alpha}\rfloor},N\geq 1\right\}$ as a measure-valued process under vague topology. Hence, with probability one, for all $T>0,\lambda>0$, and test function $\phi\in C^2_0(\R)$ with compact support, we can have a subsequence of $A(\xi_{\lfloor tN^{2\alpha}\rfloor}^N)$ and $\nu_{\lfloor tN^{2\alpha}\rfloor}^N$ such that
\begin{align*}
\sup_{t\leq T}\| A(\xi_{\lfloor tN^{2\alpha}\rfloor}^N)-u_t\|_{-\lambda}\rightarrow 0, \text{ as }N \rightarrow \infty,\\
\sup_{t\leq T}\left| \int \phi(x)\nu_{\lfloor tN^{2\alpha}\rfloor}^N(dx)-\int \phi(x)\nu_t(dx) \right|\rightarrow 0, \text{ as }N\rightarrow \infty.
\end{align*}
From Lemma \ref{measure density}, we can see that $\nu_t$ is absolutely continuous with density $\nu_t(dx)=u_t(x)dx$. By substituting $\phi^N_k=\phi\in C^2_0(\R)$ as the test function with compact support in the decomposition (\ref{decomp}),  we can see that
\begin{equation*}
M^N_{\lfloor tN^{2\alpha}\rfloor}(\phi) = (\nu^N_{\lfloor tN^{2\alpha}\rfloor},\phi)-(A(\xi_0),\phi)-\sum_{k=1}^{\lfloor tN^{2\alpha}\rfloor}\frac{1}{N^{2\alpha}}(\nu^N_k,\Delta_D \phi)
\end{equation*}
is a martingale and every term on the right-hand side converges almost surely by Lemma \ref{tight}. Hence $M^N_{\lfloor tN^{2\alpha}\rfloor}(\phi)$ converges to a local martingale
\begin{equation}
\label{decomp cont}
\begin{split}
m_t(\phi)&=\int \phi(x)\nu_t(dx)-\int \phi(x)\nu_0(dx)-\int_0^t \int \frac{1}{6}\Delta \phi(x)\nu_s(dx)ds\\
&= \int \phi(x)u_t(x)dx-\int \phi(x)f(x)dx-\int_0^t \int \frac{1}{6}\Delta \phi(x)u_s(x)dxds,
\end{split}
\end{equation}
which is continuous since every term on the right-hand side is continuous. Moreover, from (\ref{bracket}),
\begin{align*}
(M_{\lfloor tN^{2\alpha}\rfloor}^N)^2-\sum_{k=1}^{\lfloor tN^{2\alpha}\rfloor}\frac{1}{N^{2\alpha}}(A(\xi_{k-1}),\phi^2)\left(1-\frac{1}{2N}\right)
\end{align*}
is a martingale. As $N\rightarrow \infty$,
\begin{equation}\label{quad mart}
m_t^2(\phi)-\int_0^t \int \phi^2(x)u_s(x)dx ds
\end{equation}
is also a continuous local martingale. (\ref{decomp cont}) and (\ref{quad mart}) prove that any subsequential weak limit $\nu_t(dx)=u_t(x)dx$ solves (\ref{eqn: brw spde}). The uniqueness follows from Theorem 5.7.1 of \cite{dawson} which finishes the proof of Theorem \ref{thm: brw}.

\subsection{Multiple particles at one site}
\label{sec:multiple}
In this subsection, we show the probability of multiple particles at one site is negligible in the branching envelope. Then, the state function can be reduced from its number of particles to that it is occupied or vacant. 
We will first show a property (see Lemma \ref{finite Tk}) that refers to the weak limit of the envelope process. This is then used to deal with the discrete process.

Let $X_t$ denote the total mass of this system, that is
\begin{align*}
X_t &= (\nu_t,1)=\int u_t(x)dx.
\end{align*}
We then have that
\begin{align*}
X_t = \int_0^t\int \sqrt{u_s(x)} W(ds,dx),
\end{align*}
and therefore its quadratic variation is $\langle X\rangle_t=\int_0^t \int u_s(x)dxds$. Hence
\begin{align*}
\begin{cases}
X_t &= \int_0^t \sqrt{X_s}dB_s,\\
X_0&= 2.
\end{cases}
\end{align*}
For $k\geq 1$, let $T_k$ denote the stopping time given by
\begin{align*}
T_k = \inf \left\{ t>0: X_t\geq 2^k  \text{ or }\int_0^t X_sds\geq 2^{2k} \right\}=T'_k\wedge T''_k,
\end{align*}
where
\begin{align*}
T'_k=\inf \left\{ t>0:X_t\geq 2^k  \right\},~T''_k=\inf \left\{ \int_0^t X_sds\geq 2^{2k} \right\}.
\end{align*}

\begin{lemma}\label{finite Tk}
For a fixed initial condition $f$ which is continuous and compact supported satisfying $f(x)=1$ for $x\in[-1,1]$, there exists constant $C$ so that
\begin{align*}
\P(T_{k}<\infty)\leq C2^{-k}, k\in \mathbb{N}.
\end{align*}
\end{lemma}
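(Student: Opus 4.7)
The plan is to split the event $\{T_k<\infty\}=\{T'_k<\infty\}\cup\{T''_k<\infty\}$ and bound each probability by a martingale argument, then apply the union bound.

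First, for $\P(T'_k<\infty)$, I would observe that $X$ is a nonnegative continuous martingale with $X_0=2$, whose Laplace transform $\E[e^{-\mu X_t}]=\exp(-X_0\mu/(1+t\mu/2))$ forces $\P(X_t>0)\to 0$; combined with the a.s.\ convergence of nonnegative martingales, this gives $X_t\to 0$ a.s. Consequently $X_{T'_k\wedge t}$ is a bounded martingale converging a.s.\ to $2^k\,\indic_{\{T'_k<\infty\}}$, and optional stopping yields
$$\P(T'_k<\infty)=\frac{X_0}{2^k}=2^{1-k}.$$

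Next, for $\P(T''_k<\infty)$, the main device I would use is the exponential martingale
$$M^\lambda_t:=\exp\!\Big(-\lambda X_t-\frac{\lambda^2}{2}\int_0^t X_s\,ds\Big),\qquad\lambda>0.$$
Since $d\langle X\rangle_t=X_t\,dt$, Itô's formula makes the drift cancel, yielding $dM^\lambda_t=-\lambda\sqrt{X_t}\,M^\lambda_t\,dB_t$, so $M^\lambda$ is a local martingale bounded by $1$ and hence a true martingale. Optional stopping at $T''_k\wedge t$ followed by dominated convergence gives $\E[M^\lambda_{T''_k}]=e^{-\lambda X_0}$, with $M^\lambda_{T''_k}:=\lim_{t\to\infty}M^\lambda_t$ on $\{T''_k=\infty\}$. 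On $\{T''_k<\infty\}$ the continuity of the time-integral forces $\int_0^{T''_k}X_s\,ds=2^{2k}$, so $M^\lambda_{T''_k}\le e^{-\lambda^2 2^{2k-1}}$; on the complement I use only $M^\lambda_{T''_k}\le 1$. Splitting the expectation and rearranging gives
$$\P(T''_k<\infty)\le\frac{1-e^{-\lambda X_0}}{1-e^{-\lambda^2 2^{2k-1}}}\le\frac{\lambda X_0}{1-e^{-\lambda^2 2^{2k-1}}},$$
and choosing $\lambda=2^{-k}$ makes $\lambda^2 2^{2k-1}=1/2$, so the denominator equals the positive constant $1-e^{-1/2}$ and $\P(T''_k<\infty)\le C\,2^{-k}$. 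Summing the two bounds proves the lemma.

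The conceptual content is the exponential martingale $M^\lambda$ together with the balancing choice $\lambda=2^{-k}$ that equates the linear penalty on $X_{T''_k}$ with the quadratic penalty on $\int_0^{T''_k} X_s\,ds$ at the right scale. I do not anticipate a serious obstacle: the two technical points, namely that $M^\lambda$ is a genuine martingale (immediate from boundedness by $1$) and that the Feller diffusion becomes extinct a.s., are standard.
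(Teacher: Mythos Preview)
Your proof is correct. The treatment of $\P(T'_k<\infty)$ via optional stopping for the nonnegative martingale $X$ is the same as the paper's, but your bound on $\P(T''_k<\infty)$ via the exponential martingale $M^\lambda_t=\exp\bigl(-\lambda X_t-\tfrac{\lambda^2}{2}\int_0^t X_s\,ds\bigr)$ with the balanced choice $\lambda=2^{-k}$ is a genuinely different and cleaner argument. The paper instead bounds $\P(T''_k<T'_k)$ by decomposing over the successive level-crossing times $T'_j$, writing $\{T''_k<T'_k\}\subset\bigcup_{j<k}A_j$ with $A_j=\bigl\{T'_j<\infty,\ \int_{T'_j}^{T'_{j+1}}X_u\,du\ge c\,2^{2k}/(k-j)^2\bigr\}$, and then controlling each $\P(A_j)$ by Markov's inequality together with the second-moment identity $\E\int_{T'_j}^{T'_{j+1}}X_u\,du\le 2^{2j}$. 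Your route is shorter and conceptually tidy; the paper's route is more elementary (only second moments, no It\^o calculus) and, crucially, is recycled verbatim for the discrete branching random walk in a later lemma, where the exact exponential martingale is not immediately available. If you intend to reuse this bound for the discrete envelope process as the paper does, you should check that an analogue of your exponential martingale argument goes through there, or else fall back on the level-crossing decomposition at that point.
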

\begin{proof}
$\P(T_k<\infty)$ can be decomposed as
\begin{equation}\label{Tk}
\begin{split}
\P(T_k<\infty) &= \P\left(T'_k<\infty, T'_k<T''_k\right)+\P\left(T''_k<\infty,T''_k<T'_k\right)\\
&\leq \P\left(T'_k<\infty\right)+\P\left(T''_k<T'_k\right).
\end{split}
\end{equation}
If we denote $H_0=\inf\{t>0:X_t=0\}$ as the first hitting time of zero then $\P(T'_k<\infty)=\P(T'_k<H_0)$, hence the first term on the RHS of (\ref{Tk}) is simply
\begin{align*}
\P\left(T'_k<\infty\right)<\frac{X_0}{2^k}.
\end{align*}
The event $\{ T''_k<T'_k\}\subset\cup_{j=1}^{k-1}A_j$, where
\begin{align*}
A_j=\left\{ T'_j<\infty, \int_{T'_j}^{T'_{j+1}}X_udu\geq \frac{6\cdot2^{2k}}{\pi (k-j)^2} \right\}.
\end{align*}
By using the Markov property of $X_t$,
\begin{align}\label{Aj}
\P(A_j)\leq \P(T'_j<\infty)\P\left( \int_{T'_j}^{T'_{j+1}}X_udu\geq \frac{6\cdot2^{2k}}{\pi (k-j)^2} \right).
\end{align}
The total mass satisfies
\begin{align*}
X_t-X_s=\int_s^t \sqrt{X_u}dB_u,
\end{align*}
then
\begin{align*}
\E \left( (X_t-X_s)^2|X_s\right) = \E \left( \int_s^t X_udu \middle |  X_s\right).
\end{align*}
Hence we can get
\begin{align*}
\E \int_{T'_j}^{T'_{j+1}}X_udu\leq 2^{2j}.
\end{align*}
From this, the second term in (\ref{Aj}) can be bounded by using the Markov inequality
\begin{align*}
\P\left( \int_{T'_j}^{T'_{j+1}}X_udu\geq \frac{6\cdot2^{2k}}{\pi (k-j)^2} \right) \leq \frac{C(k-j)^2}{2^{2(k-j)}}.
\end{align*}
Therefore,
\begin{align*}
\P(A_j)\leq \frac{1}{2^k}\cdot \frac{CX_0(k-j)^2}{2^{k-j}}.
\end{align*}
After plugging in $\P(T_k<\infty)\leq \P(T'_k<\infty)+\sum_{j=1}^{k-1}\P(A_j)$, we have that,
\begin{align*}
\P(T_k<\infty)\leq \frac{C}{2^k}.
\end{align*}
\end{proof}
\begin{corollary}
\label{total mass}
For any $t>0$, $$\int u_t(x)dx\text{ and }\int_0^t\int u_s(x)dxds$$ are finite with probability one.
\end{corollary}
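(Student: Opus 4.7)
The plan is to deduce the corollary directly from Lemma \ref{finite Tk}, essentially as a Borel--Cantelli argument. First, I would observe that the bound $\P(T_k<\infty)\le C2^{-k}$ is summable in $k$, so by Borel--Cantelli the event $\bigcap_{k\ge 1}\{T_k<\infty\}$ has probability zero. Equivalently, with probability one there exists a (random) index $k_0$ such that $T_{k_0}=\infty$.

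On this full-measure event, the very definition of $T_{k_0}$ as the first time either $X_t\ge 2^{k_0}$ or $\int_0^t X_s\,ds\ge 2^{2k_0}$ gives the uniform bounds
\begin{equation*}
X_t = \int u_t(x)\,dx < 2^{k_0} \quad\text{and}\quad \int_0^t\!\!\int u_s(x)\,dx\,ds < 2^{2k_0} \qquad \text{for every } t>0.
\end{equation*}
In particular, both quantities are finite for each fixed $t>0$, which is the conclusion of the corollary.

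There is essentially no hard step: the whole content is contained in Lemma \ref{finite Tk}, and the remainder is bookkeeping. The only minor point to check is that the inequalities defining $T_{k_0}$ yield strict bounds on $X_t$ and on the time integral before $T_{k_0}$; since $T_{k_0}=\infty$ means the infimum is never attained, this is immediate and does not require any pathwise continuity of $X$ beyond what comes from the Feller-diffusion SDE $dX_t=\sqrt{X_t}\,dB_t$ recalled just before Lemma \ref{finite Tk}. As a byproduct, the same argument actually shows the stronger (pathwise uniform in $t$) statement that $\sup_{t\ge 0} X_t <\infty$ and $\int_0^\infty X_s\,ds<\infty$ almost surely, although only the weaker fixed-$t$ claim is asserted.
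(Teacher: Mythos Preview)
Your argument is correct and is exactly the intended one: the paper states the corollary without proof, as an immediate consequence of Lemma \ref{finite Tk}, and your Borel--Cantelli (or simply $\P(\bigcap_k\{T_k<\infty\})\le\inf_k\P(T_k<\infty)=0$) deduction is the natural way to spell it out. The observation that you in fact obtain the stronger uniform-in-$t$ bound is also accurate.
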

Then for the discrete state function, we have:
\begin{lemma}
\label{cor: multiple}
For any $k\in \mathbb{N}$ and any $x\in \Z/N^{1+\alpha}$, 
$$\P(\xi_k(x)>1\mid \F_{k-1})<(A\xi_{k-1}(x))^2N^{2\alpha-2},$$
where $\{\F_k\}_{k\geq0}$ is the natural filtration $\F_k=\sigma(\{\xi_j, 0\leq j\leq k\})$.
\end{lemma}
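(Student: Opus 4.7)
The plan is to read off the conditional distribution of $\xi_k(x)$ given $\F_{k-1}$ directly from the mechanism recorded in (\ref{brw mech}): given $\F_{k-1}$, $\xi_k(x)$ is the sum of
\[
n := \sum_{y\sim x}\xi_{k-1}(y)
\]
independent $\mathrm{Bernoulli}(1/(2N))$ random variables, hence is $\mathrm{Binomial}(n,1/(2N))$. Note that by definition of the averaging operator, $n = 2N^{\alpha}\,A\xi_{k-1}(x)$.

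Next I would bound the two-or-more probability of a Binomial by the standard pairwise union bound: the event $\{\xi_k(x)\ge 2\}$ is the union, over unordered pairs of the $n$ Bernoulli summands, of the events that both are equal to $1$, so
\[
\P(\xi_k(x)>1\mid\F_{k-1})\;\le\;\binom{n}{2}\Bigl(\frac{1}{2N}\Bigr)^{2}\;\le\;\frac{n^{2}}{2}\cdot\frac{1}{4N^{2}}\;=\;\frac{n^{2}}{8N^{2}}.
\]
Substituting $n=2N^{\alpha}A\xi_{k-1}(x)$ yields
\[
\P(\xi_k(x)>1\mid\F_{k-1})\;\le\;\frac{1}{2}\bigl(A\xi_{k-1}(x)\bigr)^{2}N^{2\alpha-2}\;<\;\bigl(A\xi_{k-1}(x)\bigr)^{2}N^{2\alpha-2},
\]
which is the claim (the final strict inequality is trivially true when $A\xi_{k-1}(x)>0$, and the case $A\xi_{k-1}(x)=0$ gives $\xi_k(x)=0$ almost surely on that conditional event, so the conditional probability vanishes).

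There is no real obstacle here; the proof is a one-line application of the union bound once the $\mathrm{Binomial}(n,1/(2N))$ structure has been identified from the update rule, and the factor $N^{2\alpha-2}$ appears purely because the neighbourhood of $x$ contains $2N$ sites while the rescaled density is normalised by $2N^{\alpha}$.
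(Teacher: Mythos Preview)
Your proof is correct and follows essentially the same route as the paper: identify that, conditionally on $\F_{k-1}$, $\xi_k(x)$ is $\mathrm{Binomial}(N',1/(2N))$ with $N'=2N^{\alpha}A\xi_{k-1}(x)$, and then bound $\P(\xi_k(x)\ge 2)$ elementarily. The only cosmetic difference is that the paper writes $1-\P(\xi_k(x)=0)-\P(\xi_k(x)=1)\le (N'/(2N))^2$ directly, whereas you use the pairwise union bound $\binom{N'}{2}(2N)^{-2}$, which in fact gives the slightly sharper constant $1/2$.
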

\begin{proof}
In the discrete system, we have
$$\xi_k(x)=\sum_{y\sim x}\sum_{w=1}^{\xi_{k-1}(y)}\eta^w_k(y,x).$$
Denote $N'=\sum_{y\sim x}\xi_{k-1}(y)=2N^{\alpha}(A\xi_{k-1}(x))$. Given $\{\xi_{k-1}(y),y\in\Z/N^{1+\alpha}\}$, we have
$$\xi_k(x)\overset{d}{=}\text{Binomial}(N',1/(2N)).$$
Note $\P_{k-1}(\cdot)$ as conditional probability given $\F_{k-1}$.
\begin{align*}
\P_{k-1}(\xi_k(x)\geq 2) &=1-\P_{k-1}(\xi_k(x)=0)-\P_{k-1}(\xi_k(x)=1)\\
&=1-\left(1-\frac{1}{2N}\right)^{N'}-N'\frac{1}{2N}\left(1-\frac{1}{2N}\right)^{N'-1}\\
&\leq \left( \frac{N'}{2N} \right)^2\\
&=N^{2\alpha-2}(A\xi_{k-1}(x))^2.
\end{align*}
\end{proof}
Since the branching envelope dominates the true horizontal process, this property will also hold for the true horizontal process.

\section{The true horizontal process}
\label{sec:real}
The true process we consider is dominated by the branching random walk in the preceding section, which means that at each time step, the particles will move and reproduce following the mechanism of $\xi_k$. But if the site $x$ has been occupied by some particle before, then it cannot be occupied again. We denote $\{\hat{\xi}_{k}(x)\in \{0,1\},k\in \mathbb{Z}_+, x\in \mathbb{Z}/N^{1+\alpha}\}$ as the mechanism of the true process. 
It can be expressed as
\begin{align*}
\hat{\xi}_{k+1}(x)=\begin{cases}
1 &\text{ if } \sum_{j\leq k}\hat{\xi}_j(x)=0 \text{ and }\sum_{y\in\mathcal{N}_k(x)}\eta_{k+1}(y,x)\geq 1,\\
0 &\text{otherwise},
\end{cases}
\end{align*}
where $\mathcal{N}_k(x)=\{y\sim x:\hat\xi_k(y)=1\}$ having cardinality $N_k(x)=\sum_{y\sim x}\hat{\xi}_k(y)$ and $(\eta_{k+1}(y,x))_{k,y,x}$ is an i.i.d. sequence with distribution $\text{Bernoulli}(1/(2N))$.
$\hat{\xi}_{k+1}(x)$ can be rewritten as
\begin{equation}
\label{real mech}
\begin{split}
\hat{\xi}_{k+1}(x)&=\indic_{\{\sum_{y\in\mathcal{N}_k(x)}\eta_{k+1}(y,x)\geq 1\}}\left( 1-\sum_{j\leq k}\hat{\xi}_j(x) \right)\\
&=\underbrace{\sum_{y\in\mathcal{N}_k(x)}\eta_{k+1}(y,x)\left( 1-\sum_{j\leq k}\hat{\xi}_j(x) \right)}_{\text{main}}\\
&~~~~+\underbrace{\left(\indic_{\{\sum_{y\in\mathcal{N}_k(x)}\eta_{k+1}(y,x)\geq 1\}}-\sum_{y\in\mathcal{N}_k(x)}\eta_{k+1}(y,x)\right)\left( 1-\sum_{j\leq k}\hat{\xi}_j(x) \right)}_{\text{error}}\\
&=\left( \sum_{y\in\mathcal{N}_k(x)}\eta_{k+1}(y,x) \right)\left( 1-\sum_{j\leq k}\hat{\xi}_j(x) \right)+E_k(x).
\end{split}
\end{equation}

The main goal of this section is to describe the limiting behaviour of the true horizontal process, summarized in the following result:
\begin{theorem}
\label{thm real spde}
Assume that as $N\rightarrow\infty$, $A(\hat\xi_0)$ converges in $\mathcal{C}$ to a continuous function $f$ with compact support.
When $\alpha=1/5$, as $N\rightarrow \infty$, $A(\hat{\xi}_{\lfloor tN^{2\alpha}\rfloor})(x)$ converges in law to $\hat{u}_t(x)$, which is the unique in law solution to the following SPDE
\begin{align}\label{real spde}
\begin{cases}
\frac{\partial \hat{u}_t}{\partial t}=\frac{1}{6}\Delta \hat{u}_t-\hat{u}_t\int_0^t\hat{u}_sds+\sqrt{\hat{u}_t}\dot{W}(t,\cdot) \\
\hat{u}_0=f,
\end{cases}
\end{align}
where $\dot{W}$ is the space-time white noise.
\end{theorem}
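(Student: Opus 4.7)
The strategy is to parallel Theorem \ref{thm: brw}: derive a semimartingale decomposition for $(\nu^N_n,\phi)$ with an additional drift from attrition, establish tightness of $\{A(\hat\xi_{\lfloor tN^{2\alpha}\rfloor})\}_N$ in $D([0,\infty),\mathcal C)$ via domination by the envelope $\xi$, identify subsequential limits as solutions of \eqref{real spde}, and conclude by uniqueness in law. A key observation is that each site can be occupied at most once in the true process, so $V_n(x):=\sum_{j\le n}\hat\xi_j(x)\in\{0,1\}$ is simply the indicator that $x$ has been visited by time $n$. From (\ref{real mech}) a direct calculation of the one-step conditional mean yields
\begin{align*}
\E[\hat\xi_{n+1}(x)-\hat\xi_n(x)\mid\F_n]=\left(\tfrac{1}{2N}\sum_{y\sim x}\hat\xi_n(y)-\hat\xi_n(x)\right)-\tfrac{V_n(x)}{2N}\sum_{y\sim x}\hat\xi_n(y)+\mathcal E_n(x),
\end{align*}
where $\mathcal E_n$ is the Binomial-vs-indicator correction $E_n(x)$ from (\ref{real mech}); by Lemma \ref{cor: multiple} (transferred from $\xi$ to $\hat\xi$ via the coupling $\hat\xi\le\xi$) its total contribution is of order $N^{3\alpha-1}=N^{-2/5}$ and hence negligible. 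Summation by parts exactly as in (\ref{decomp}) turns the first bracket into the Laplacian drift $(\nu^N_{k-1},N^{-2\alpha}\Delta_D\phi)$ plus a martingale $\hat M_n(\phi)$ whose bracket satisfies the same bound (\ref{bracket}) with $\hat\xi$ in place of $\xi$, while the second bracket yields the new attrition drift $D^{\mathrm{att}}_k(\phi):=N^{-\alpha-1}\sum_x\phi(x)A\hat\xi_{k-1}(x)V_{k-1}(x)$. Tightness then follows: $\hat\xi\le\xi$ transfers the moment bounds of Lemma \ref{moment} from $A(\xi)$ to $A(\hat\xi)$, and the modulus estimate of Lemma \ref{tight} carries through unchanged because the extra attrition drift only reduces masses and does not inflate the martingale bracket.

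The main obstacle is identifying the scaling limit of $\sum_k D^{\mathrm{att}}_k(\phi)$. Using $V_{k-1}(x)=\sum_{j<k}\hat\xi_j(x)$ together with the identity $\sum_x\phi(x)A\hat\xi_{k-1}(x)\hat\xi_j(x)=N^{2\alpha}(\nu^N_j,\phi A\hat\xi_{k-1})$, a swap of sums gives
\begin{align*}
\sum_{k=1}^{\lfloor tN^{2\alpha}\rfloor}D^{\mathrm{att}}_k(\phi)=N^{\alpha-1}\sum_{k=1}^{\lfloor tN^{2\alpha}\rfloor}\sum_{j=0}^{k-1}\left(\nu^N_j,\phi\,A\hat\xi_{k-1}\right).
\end{align*}
The $(j,k)$-sum has $\sim N^{4\alpha}/2$ terms, each of order $O(1)$, so the total is of order $N^{5\alpha-1}$ --- exactly $O(1)$ when $\alpha=1/5$, which is precisely where the critical exponent enters. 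Writing the double sum as a Riemann sum in the rescaled times $r=j/N^{2\alpha}$, $s=(k-1)/N^{2\alpha}$, invoking tightness to obtain joint convergence of $\bigl(A(\hat\xi_{\lfloor rN^{2\alpha}\rfloor}),A(\hat\xi_{\lfloor sN^{2\alpha}\rfloor})\bigr)\Rightarrow(\hat u_r,\hat u_s)$ for any subsequential limit $\hat u$, and applying the continuous mapping theorem to the bilinear functional $(u,v)\mapsto\int\phi\,u\,v\,dx$ identifies the limit as
\begin{align*}
\int_0^t\!\int_0^s\!\int\phi(x)\hat u_s(x)\hat u_r(x)\,dx\,dr\,ds=\int_0^t\!\int\phi(x)\hat u_s(x)\!\left(\int_0^s\hat u_r(x)\,dr\right)\!dx\,ds,
\end{align*}
which is precisely the tested form of the killing drift $-\hat u_t\int_0^t\hat u_s\,ds$ in \eqref{real spde}. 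The delicate points are (i) joint convergence at all pairs of rescaled times (from tightness in the Skorohod path space), (ii) uniform-in-$(j,k)$ replacement of $(\nu^N_j,\phi A\hat\xi_{k-1})$ by $\int\phi\hat u_s\hat u_r\,dx$, handled by Lemma \ref{measure density} combined with the uniform integrability supplied by Lemma \ref{moment}(c), and (iii) near-diagonal control for $j\approx k$, by splitting the sum at $|k-j|\le\delta N^{2\alpha}$ and sending $\delta\downarrow 0$ at the end.

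The convergence of the martingale bracket $\langle\hat M(\phi)\rangle_{\lfloor tN^{2\alpha}\rfloor}\to\int_0^t\!\int\phi^2\hat u_s\,dx\,ds$ follows from (\ref{bracket}) and the tightness, so every subsequential limit solves \eqref{real spde}. For uniqueness in law I would enlarge the state space to the Markovian pair $(\hat u_t,w_t)$ with $w_t(x):=\int_0^t\hat u_s(x)ds$, so that the killing coefficient $\hat u_t\,w_t$ becomes pathwise locally Lipschitz in $\hat u$. On the localizing stopping times $T_k$ of Lemma \ref{finite Tk}, $w_t$ is bounded, and a Girsanov/duality comparison with the Dawson--Watanabe process of Theorem \ref{thm: brw} (cf.\ \cite{lalley} and Theorem 5.7.1 of \cite{dawson}) yields uniqueness. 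The principal difficulty throughout is identifying the scaling limit of the attrition drift; the other parts are bookkeeping extensions of the envelope analysis.
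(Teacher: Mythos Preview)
Your overall strategy matches the paper's: a semimartingale decomposition with a new attrition drift, tightness, identification of the limit as a solution of \eqref{real spde}, and uniqueness by a Girsanov change of measure against the Dawson--Watanabe envelope. Your identification of the drift limit via the Riemann-sum/continuous-mapping argument and your uniqueness sketch are essentially what the paper does in Subsection~\ref{sec: uniqueness}.

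The gap is in your tightness argument. You assert that ``the modulus estimate of Lemma~\ref{tight} carries through unchanged because the extra attrition drift only reduces masses and does not inflate the martingale bracket.'' Domination $\hat\xi\le\xi$ does transfer the moment bounds of Lemma~\ref{moment} and does control the martingale bracket, but it does \emph{not} yield a modulus of continuity for $A(\hat\xi)$: if $0\le f\le g$ and $g$ is H\"older, $f$ need not be. In the Green function representation
\[
A(\hat\xi_n)(x)=(\hat\nu^N_0,\psi_n^x)\;-\;\sum_{k=1}^{n}\sum_{j<k}\frac{1}{N^{1-\alpha}}\bigl(A(\hat\xi_{k-1})\psi^x_{n-k},\hat\nu^N_j\bigr)\;+\;\hat M_n(\psi^x_{n-\cdot})\;+\;\text{error},
\]
the attrition drift is a new term whose increments in $t$ and $x$ must themselves be shown to satisfy a bound of the form $C\bigl(|x-y|^{p/4}+|t-s|^{p/4}+N^{-\alpha p/2}\bigr)$. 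The paper does this explicitly (Lemma~\ref{real proc tight}): using Lemma~\ref{psi}(c)--(e) and Lemma~\ref{moment}(a),(c), the drift increments pick up factors $N^{(5\alpha-1)p}$ and $N^{(4\alpha-1)p}$, which are $O(1)$ or vanishing precisely when $\alpha=1/5$. This is where the critical exponent enters \emph{quantitatively}, not only in the power-counting heuristic for the total drift. The same lemma also controls the error term against the singular test functions $\psi^x_{n-\cdot}$ (not just compactly supported $\phi$), which your appeal to Lemma~\ref{cor: multiple} for smooth $\phi$ does not cover. Without these estimates you have not established tightness of $A(\hat\xi_{\lfloor tN^{2\alpha}\rfloor})$ in $D([0,\infty),\mathcal C)$, and then your Riemann-sum identification of the attrition drift---which presupposes joint convergence in the path space---does not get off the ground.
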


\begin{remark}
In the later proof, we frequently choose $f$ such that $f(x)=1$ for $x\in[-r,r],r>0$.
\end{remark}

The proof is given in the next two subsections: we first prove the tightness and that any weak limit satisfies (\ref{real spde}), and in Subsection \ref{sec: uniqueness} we prove the uniqueness. 
\subsection{Limit behaviour of the rescaled horizontal process}
Similarly to (\ref{brw mech}) and by (\ref{real mech}),
\begin{align*}
\hat{\xi}_{k+1}(x) &=\left( \frac{1}{2N}\sum_{y\sim x}\hat{\xi}_k(y)+\sum_{y\in\mathcal{N}_k(x)}\left(\eta_{k+1}(y,x)-\frac{1}{2N} \right) \right)\left( 1-\sum_{j\leq k}\hat{\xi}_j(x) \right)+E_k(x)\\
&=\hat{\xi}_k(x)+\frac{1}{2N}\sum_{y\sim x}(\hat{\xi}_k(y)-\hat{\xi}_k(x))+\sum_{y\in\mathcal{N}_k(x)}\left(\eta_{k+1}(y,x)-\frac{1}{2N} \right)\\
&~~~~-\frac{1}{2N}\sum_{y\sim x}\sum_{j\leq k}\hat{\xi}_k(y)\hat{\xi}_j(x)-\sum_{y\in\mathcal{N}_k(x)}\left(\eta_{k+1}(y,x)-\frac{1}{2N} \right)\sum_{j\leq k}\hat{\xi}_j(x)+E_k(x)\\
&=\hat{\xi}_k(x)+\frac{1}{2N}\sum_{y\sim x}(\hat{\xi}_k(y)-\hat{\xi}_k(x))+\sum_{y\in\mathcal{N}_k(x)}\left(\eta_{k+1}(y,x)-\frac{1}{2N} \right)\\
&~~~~-\frac{1}{N^{1-\alpha}}A(\hat{\xi}_k(x))\sum_{j\leq k}\hat{\xi}_j(x)-\sum_{y\in\mathcal{N}_k(x)}\left(\eta_{k+1}(y,x)-\frac{1}{2N} \right)\sum_{j\leq k}\hat{\xi}_j(x)+E_k(x).
\end{align*}

Denote $\hat{\nu}^N_k=\frac{1}{N^{2\alpha}}\sum_x\hat{\xi}_k(x)\delta_x$ as the measure generated by $\hat{\xi}_k$. Choose test function $\phi^N$ satisfying (\ref{test fun cond}) and sum by parts,
\begin{align*}
(\hat{\nu}^N_k,\phi^N_k)-(\hat{\nu}^N_{k-1},\phi^N_{k-1}) &=(\hat{\nu}^N_k,\phi^N_k-\phi^N_{k-1})+(\hat{\nu}^N_{k-1},N^{-2\alpha}\Delta_D \phi^N_{k-1})+d_k^{(1)}(\phi^N)\\
&~~~~-\frac{1}{N^{1-\alpha}}\sum_{j\leq k-1}(A(\hat{\xi}_{k-1}) \phi^N_{k-1},\hat{\nu}^N_j)-d_k^{(2)}(\phi)+E_k(\phi^N),
\end{align*}
with the error term$$E_k(\phi^N)=\frac{1}{N^{2\alpha}}\sum_xE_k(x)\phi^N_k(x),$$
and martingale terms
\begin{align*}
d_k^{(1)}(\phi^N)&=\frac{1}{N^{2\alpha}}\sum_x\phi^N_{k-1}(x)\sum_{y\in\mathcal{N}_{k-1}(x)}\left(\eta_{k}(y,x)-\frac{1}{2N}\right),\\
d_k^{(2)}(\phi^N)&=\frac{1}{N^{2\alpha}}\sum_x\phi^N_{k-1}(x)\sum_{y\in\mathcal{N}_{k-1}(x)}\left(\eta_{k}(y,x)-\frac{1}{2N}\right)\sum_{j\leq k-1}\hat{\xi}_j(x).
\end{align*}
Summing $k$ from $1$ to $n$, we can get a semimartingale decomposition
\begin{equation}
\label{real proc decomp}
\begin{split}
(\hat{\nu}^N_n,\phi^N_n)-(A(\hat{\xi}_0),\phi_0)&=(\hat{\nu}^N_n,\phi^N_n-\phi^N_{n-1})+\sum_{k=1}^{n-1}(\hat{\nu}^N_k,\phi^N_k-\phi^N_{k-1}+N^{-2\alpha}\Delta_D \phi^N)\\
&~~~~-\sum_{k=1}^{n-1}\sum_{j\leq k}\frac{1}{N^{1-\alpha}}(A(\hat{\xi}_k)\phi^N_k,\hat{\nu}^N_j)+\hat{M}_n(\phi^N)+\sum_{k=1}^nE_k(\phi^N),
\end{split}
\end{equation}
where the martingale
$\hat{M}_n(\phi^N)=M_n^{(1)}(\phi^N)-M_n^{(2)}(\phi^N)=\sum_{k=1}^n\left(d_k^{(1)}(\phi^N)-d_k^{(2)}(\phi^N)\right)$ has square variation
\begin{equation}
\label{real proc bracket}
\begin{split}
\langle \hat{M}(\phi^N)\rangle_n &=\sum_{k=1}^n \E_{k-1}(d_k^{(1)}(\phi^N)-d_k^{(2)}(\phi^N))^2\\
&=\sum_{k=1}^n\frac{1}{2N^{1+4\alpha}}\sum_x\sum_{y\sim x}\hat{\xi}_{k-1}(y)\left( 1-\left(\sum_{j\leq k-1}\hat{\xi}_j(x)\right)^2 \right)(\phi^N_{k-1})^2(x)\left(1-\frac{1}{2N}\right)\\
&\leq \sum_{k=1}^n\frac{\|\phi^N_{k-1}\|_0}{2N^{1+4\alpha}}\sum_x\sum_{y\sim x}\hat{\xi}_{k-1}(y)\left( 1-\sum_{j\leq k-1}\hat{\xi}_j(x)\right)\phi^N_{k-1}(x)\\
&=\sum_{k=1}^n\frac{\|\phi^N_{k-1}\|_0}{N^{2\alpha}}(A(\hat{\xi}_{k-1}),\phi)-\frac{\|\phi\|_0}{N^{1+\alpha}}\sum_{k=1}^n\sum_{j\leq k-1}(A(\hat{\xi}_{k-1})\phi^N_{k-1},\hat{\nu}^N_j),
\end{split}
\end{equation}
where we use the fact that $\sum_{j\leq k}\hat{\xi}_j(x)\in \{0,1\}$ to get the first inequality.
 We first show that the error term in (\ref{real proc decomp}) is negligible.
\begin{lemma}
\label{small error}
When $\alpha<1/3$, for $t\leq T$, the cumulative error term over time $\lfloor tN^{2\alpha}\rfloor$
$$\frac{1}{N^{2\alpha}}\sum_{k=1}^{\lfloor tN^{2\alpha}\rfloor}\sum_xE_k(x)\phi^N_k(x)\rightarrow 0\text{ in }L^2 \text{ as }N\rightarrow\infty.$$
The test function $\phi^N_k(x)$ is chosen as the discrete approximation of $\phi(t,x):\R_+\times\R\rightarrow\R$ by taking $\phi^N(k,x)=\phi\left(\frac{k}{N^{2\alpha}},x\right)$ for $x\in\Z/N^{1+\alpha}$, where $\phi$ is compact supported and twice differentiable in $t$ and $x$.
\end{lemma}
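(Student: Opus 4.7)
The plan is to exploit the fact that $E_k(x)$ is supported on the event $\{S_k(x)\ge 2\}$, where $S_k(x):=\sum_{y\in\mathcal{N}_k(x)}\eta_{k+1}(y,x)$ is, conditionally on $\F_k$, $\mathrm{Binomial}(N_k(x),1/(2N))$ with $N_k(x)=2N^\alpha A(\hat\xi_k)(x)$. Since $\indic_{\{S\ge 1\}}-S=-(S-1)\indic_{\{S\ge 2\}}$ and $(S-1)^2\indic_{\{S\ge 2\}}\le S(S-1)$, the elementary factorial moment identity $\E[S(S-1)\mid\F_k]=N_k(x)(N_k(x)-1)/(2N)^2$ yields the two key pointwise-in-$k$ estimates
\begin{equation*}
\big|\E[E_k(x)\mid \F_k]\big|\le C\,N^{2\alpha-2}\,A(\hat\xi_k)(x)^2,\qquad \E\big[E_k(x)^2\mid \F_k\big]\le C\,N^{2\alpha-2}\,A(\hat\xi_k)(x)^2,
\end{equation*}
the attrition factor $1-\sum_{j\le k}\hat\xi_j(x)\in\{0,1\}$ only improving the inequalities.

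Next I would split the quantity of interest as $P_N(t)+M_N(t)$, where $P_N(t)$ collects the $\F_k$-conditional means and $M_N(t)$ the centered summands. For fixed $k$ the variables $\{E_k(x)\}_x$ are conditionally independent given $\F_k$ since the Bernoullis $\eta_{k+1}(\cdot,x)$ and $\eta_{k+1}(\cdot,x')$ are independent for $x\ne x'$, and across $k$ the centered partial sums form a martingale in $n$. Orthogonality of the martingale increments together with the conditional variance bound gives
\begin{equation*}
\E|M_N(t)|^2\;\le\; \frac{C\,N^{2\alpha-2}}{N^{4\alpha}}\sum_{k=1}^{\lfloor tN^{2\alpha}\rfloor}\sum_x \E\big[A(\hat\xi_k)(x)^2\big]\,\phi^N_k(x)^2.
\end{equation*}
Using the pathwise domination $\hat\xi_k\le \xi_k$ together with the envelope moment bounds of Lemma \ref{moment}(c), and the Riemann-sum conversion $\sum_x=O(N^{1+\alpha})$ on the compact support of $\phi^N_k$, this is of order $N^{\alpha-1}$ and hence negligible for any $\alpha<1$.

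For the compensator I would apply Minkowski's inequality in $L^2(\P)$ to the pointwise bound $|P_N(t)|\le C N^{-2}\sum_{k,x}A(\hat\xi_k)(x)^2|\phi^N_k(x)|$, again using Lemma \ref{moment}(c) to estimate $\|A(\hat\xi_k)(x)^2\|_{L^2}=(\E[A^4])^{1/2}\le C$ uniformly (up to an exponential weight absorbed by the compact support of $\phi^N_k$). The same scaling argument then gives $\|P_N(t)\|_{L^2}\le C_T\,N^{3\alpha-1}$, which vanishes exactly when $\alpha<1/3$. Combining both pieces yields the claimed $L^2$ convergence to $0$.

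The main obstacle is the compensator estimate: the mean of $E_k(x)$ has a definite sign (it equals $-\E[(S-1)_+\mid\F_k]\le 0$), so no additional signed-fluctuation cancellation is available beyond the second-order Binomial bound, and the exponent $3\alpha-1$ is sharp for this method. This is precisely why the hypothesis $\alpha<1/3$ appears in the statement. The remaining ingredients -- conditional independence of $E_k(x)$ in $x$, the martingale structure in $k$, and the uniform moment control on $A(\hat\xi_k)^2$ via domination by the envelope -- are already in place from Section \ref{sec:envelope}.
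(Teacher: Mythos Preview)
Your proposal is correct, and the key estimates---the factorial-moment bound $\E[E_k(x)^2\mid\F_k]\le CN^{2\alpha-2}A(\hat\xi_k)(x)^2$, conditional independence of $\{E_k(x)\}_x$ given $\F_k$, and the uniform moment control on $A(\hat\xi_k)$ via domination by the envelope---are exactly those used in the paper. Both arguments arrive at the rate $N^{3\alpha-1}$ for the dominant contribution, which is where the hypothesis $\alpha<1/3$ enters.

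The organization differs. The paper first applies Cauchy--Schwarz over the time sum $\sum_k$ and then invokes conditional independence in $x$ to pass from $\E\big(\sum_x E_k(x)\phi^N_k(x)\big)^2$ to $\sum_x \E[E_k(x)^2](\phi^N_k)^2(x)$; as written this step glosses over the fact that $E_k(x)$ is not conditionally centered (indeed $E_k(x)\le 0$ pointwise, so the cross terms are nonnegative rather than zero). Your Doob decomposition $P_N+M_N$ handles this cleanly: after centering, the martingale increments are genuinely orthogonal in both $k$ and $x$, giving the better rate $O(N^{\alpha-1})$ for $\E|M_N|^2$, while the compensator $P_N$ is bounded directly via Minkowski and the conditional-mean estimate, yielding $\|P_N\|_{L^2}=O(N^{3\alpha-1})$. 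Your route is thus a bit more careful and makes explicit which piece forces the restriction $\alpha<1/3$; the paper's shortcut arrives at the same final exponent but needs the compensator bound implicitly to justify the dropped cross terms.
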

\begin{proof}
By H\"older inequality,
\begin{align*}
&~~~~\E\left| \frac{1}{N^{2\alpha}}\sum_{k=1}^{\lfloor tN^{2\alpha}\rfloor}\sum_xE_k(x)\phi^N_k(x) \right|^2\\
&\leq \frac{t}{N^{2\alpha}}\sum_{k=1}^{\lfloor tN^{2\alpha}\rfloor}\E\left(\sum_xE_k(x)\phi^N_k(x)\right)^2\\
&\leq \frac{T}{N^{2\alpha}}\sum_{k=1}^{\lfloor tN^{2\alpha}\rfloor}\sum_x\E\left(\indic_{\{\sum_{y\in\mathcal{N}_k(x)}\eta_{k+1}(y,x)\geq 1\}}-\sum_{y\in\mathcal{N}_k(x)}\eta_{k+1}(y,x)\right)^2(\phi^N_k)^2(x),
\end{align*}
where in the second inequality, we used the facts that $\left|1-\sum_{j\leq k}\hat{\xi}_j(x)\right|\leq 1$ and, given $\F_k$, $$\indic_{\{\sum_{y\in\mathcal{N}_k(x)}\eta_{k+1}(y,x)\geq 1\}}-\sum_{y\in\mathcal{N}_k(x)}\eta_{k+1}(y,x),x\in \Z/N^{1+\alpha}$$ are conditionally independent. Following similar reason as Lemma \ref{cor: multiple},
$$\E\left(\indic_{\{\sum_{y\in\mathcal{N}_k(x)}\eta_{k+1}(y,x)\geq 1\}}-\sum_{y\in\mathcal{N}_k(x)}\eta_{k+1}(y,x)\right)^2\leq\frac{\E [N_k(x)^2]}{4N^2}.$$
$N_k(x)=\sum_{y\sim x}\hat{\xi}_k(y)$ can be written as $2N^{\alpha}A\hat{\xi}_k(x)$. Hence
\begin{align*}
\E\left| \frac{1}{N^{2\alpha}}\sum_{k=1}^{\lfloor tN^{2\alpha}\rfloor}\sum_xE_k(x)\phi^N_k(x) \right|^2 &\leq \frac{T}{N^2}\sum_{k=1}^{\lfloor tN^{2\alpha}\rfloor}\sum_x \E(A\hat{\xi}_k(x))^2(\phi^N_k)^2(x)\\
&\leq \frac{C(\lambda,f,T)}{N^{1-3\alpha}}\sum_{k=1}^{\lfloor tN^{2\alpha}\rfloor}\frac{1}{N^{2\alpha}}(\phi^N_k,e_{\lambda})^2 \text{ (Lemma \ref{moment} (c))}.
\end{align*}
The result follows by using the properties of test functions (\ref{test fun cond}).
\end{proof}

Choosing $\phi^N_k=\psi_{n-k}$ as in Section \ref{green fun}, we can obtain
\begin{align}
\label{real proc approx}
A(\hat{\xi}_n)(x)=(\hat{\nu}^N_0,\psi_n^x)-\sum_{k=1}^{n}\sum_{j\leq k-1}\frac{1}{N^{1-\alpha}}\left(A(\hat{\xi}_{k-1})\psi_{n-k}^x,\hat{\nu}^N_j\right)+\hat{M}_n(\psi_{n-\cdot}^x)+\sum_{k=1}^nE_k(\psi^x_{n-k}).
\end{align}

Since $\hat{\xi}_k(x)$ is dominated by $\xi_k(x)$, the estimations in Lemma \ref{moment} also hold for $\hat{\xi}_k(x)$. As in Section \ref{tightness}, we will use the estimations in Lemma \ref{psi} and Lemma \ref{moment} to get the tightness of $A(\hat{\xi}_{\lfloor tN^{2\alpha}\rfloor})(x)$. We assume that  the linear interpolation of $A(\hat{\xi}_0)$ converges to $f$ under $\|\cdot\|_{-\lambda}$ for any $\lambda>0$ as $N\rightarrow \infty$ and let
$$\hat{A}(\hat{\xi}_k)(x)=A(\hat{\xi}_k)(x)-(\hat{\nu}^N_0,\psi_k^x).$$

\begin{lemma}
\label{real proc tight}
When $\alpha=1/5$, for $0\leq s\leq t\leq T$, $x,y\in \mathbb{Z}/N^{1+\alpha}$, $|t-s|\leq 1$, $|x-y|\leq 1$, $\lambda>0$ and $p\geq 2$,
\begin{equation}
\label{eqn: true tightness}
\E |\hat{A}(\hat{\xi}_{\lfloor tN^{2\alpha}\rfloor})(x)-\hat{A}(\hat{\xi}_{\lfloor sN^{2\alpha}\rfloor})(y)|^p \leq C(\lambda,p,f,T)\e_{\lambda p}(x)\left( |x-y|^{\frac{p}{4}}+|t-s|^{\frac{p}{4}}+N^{-\frac{\alpha p}{2}} \right).
\end{equation}
\end{lemma}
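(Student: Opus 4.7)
The proof parallels Lemma \ref{tight} term by term, starting from the semimartingale representation (\ref{real proc approx}). Write the centred approximation as
\begin{align*}
\hat{A}(\hat{\xi}_n)(x) &= \hat{M}_n(\psi^x_{n-\cdot}) - D_n(x) + \mathcal{E}_n(x),
\end{align*}
where
\begin{align*}
D_n(x) &= \sum_{k=1}^n\sum_{j\le k-1}\frac{1}{N^{1-\alpha}}\bigl(A(\hat{\xi}_{k-1})\psi^x_{n-k},\hat{\nu}^N_j\bigr), \\
\mathcal{E}_n(x) &= \sum_{k=1}^n E_k(\psi^x_{n-k}).
\end{align*}
Then split the two-point difference into the spatial increment at time $\lfloor tN^{2\alpha}\rfloor$ and the temporal increment at site $y$, and bound the contributions of $\hat{M}$, $D$ and $\mathcal{E}$ separately.

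For the martingale part, observe from (\ref{real proc bracket}) that the bracket of $\hat{M}(\phi^N)$ is pointwise dominated by the bracket of $M(\phi^N)$ in (\ref{bracket}), since the attrition subtraction is non-negative. Combined with the BDG inequality, Lemma \ref{psi}(b),(d), and Lemma \ref{moment}(c) applied to $\hat{\xi}$ (legitimate by domination $\hat{\xi}_k\le\xi_k$), this reproduces verbatim the bounds (\ref{space}), (\ref{time1}), (\ref{time2}) of Lemma \ref{tight} and contributes $C(\lambda,p,f,T)\e_{\lambda p}(x)(|x-y|^{p/4}+|t-s|^{p/4}+N^{-\alpha p/2})$, exactly of the required shape.

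For the drift term $D_n$, handle the spatial increment at fixed time by replacing $\psi^x_{n-k}$ with $\psi^x_{n-k}-\psi^y_{n-k}$ and using the increment estimate of Lemma \ref{psi}(d); handle the temporal increment by splitting at index $\lfloor sN^{2\alpha}\rfloor$, bounding the bulk by the time-regularity of $\psi$ from Lemma \ref{psi}(c) and the boundary sum over $\lfloor sN^{2\alpha}\rfloor<k\le\lfloor tN^{2\alpha}\rfloor$ by direct counting in $|t-s|$. In each case apply H\"older's inequality in the double index $(k,j)$ and use the moment bound of Lemma \ref{moment}(c) for $A(\hat{\xi}_{k-1})$ together with the control of $(\e_{-\lambda},\hat\nu^N_j)$ that Lemma \ref{moment} provides. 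For the error term, upgrade Lemma \ref{small error} to $L^p$: the conditional second-moment bound $\E[E_k(x)^2\mid\F_k]\le N^{2\alpha-2}(A\hat{\xi}_k(x))^2$ established in the proof of that lemma, combined with the trivial pointwise bound $|E_k(x)|\le N_k(x)\le 2N^\alpha$ and Lemma \ref{moment}(c), yields $\|\mathcal{E}_{\lfloor tN^{2\alpha}\rfloor}(x)\|_p=O(N^{-\alpha p/2})$ uniformly in $x$ up to the exponential weight, so it is absorbed into the stated right-hand side.

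The main obstacle is the drift $D_n$: the prefactor $N^{\alpha-1}$ and the cardinality $O(N^{4\alpha})$ of the double sum combine as $N^{5\alpha-1}$, which is $O(1)$ \emph{precisely} at the critical value $\alpha=1/5$. This is the quantitative incarnation of the heuristic in Remark~(a), and it is exactly the reason the limiting equation (\ref{real spde}) acquires the nonlocal term $-\hat u_t\int_0^t\hat u_s\,ds$. The delicate point is to absorb the $\psi$-differencing inside this balanced double sum while preserving the H\"older exponent $p/4$ in both $|x-y|$ and $|t-s|$: Lemma \ref{psi}(c)-(d) must be invoked sharply enough that the extra regularity of $\psi$ gained from differencing is not eaten by the worsened time-integrability $(n-k+1)^{-3/4}$ produced in the estimates.
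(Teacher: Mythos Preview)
Your strategy matches the paper's: decompose via (\ref{real proc approx}) into martingale, attrition drift, and error; split into space and time increments; invoke Lemma~\ref{psi} and Lemma~\ref{moment} (transferred to $\hat\xi$ by domination). Your treatment of the martingale is in fact a little cleaner than the paper's: you bound the bracket of $\hat M$ directly by that of the envelope martingale $M$ and recycle Lemma~\ref{tight} wholesale, whereas the paper splits $\hat M = M^{(1)}-M^{(2)}$ and separately estimates the $M^{(2)}$ increments via BDG before appealing to Lemma~\ref{tight} for $M^{(1)}$. Both routes work; yours saves some writing.

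There is one genuine slip. In your error-term argument you claim the pointwise bound $|E_k(x)|\le N_k(x)\le 2N^\alpha$. The second inequality is false: $N_k(x)=\sum_{y\sim x}\hat\xi_k(y)$ can be as large as $2N$, not $2N^\alpha$ (equivalently, $A\hat\xi_k(x)$ is not uniformly bounded). So the $L^2$--$L^\infty$ interpolation you sketch does not produce the stated $O(N^{-\alpha p/2})$. The paper instead decomposes $E_k(x')=E_k^{(1)}(x')+E_k^{(2)}(x')$ into its $\F_k$-conditional mean (bounded by $N_k(x')^2/(4N^2)=N^{2\alpha-2}(A\hat\xi_k(x'))^2$) and a martingale difference; the predictable part is summed directly via H\"older, Lemma~\ref{psi}(a),(b) and Lemma~\ref{moment}(c), while the martingale part goes through BDG and Lemma~\ref{psi}(c). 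Both pieces come out as $O(N^{-(1-3\alpha)p})$ and $O(N^{-(1-\alpha)p/2})$, each $\ll N^{-\alpha p/2}$ at $\alpha=1/5$. A minor point: for the time increment of the drift you want Lemma~\ref{psi}(e) (the $\|\psi_k-\psi_l\|_\lambda$ estimate), not~(c).
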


\begin{proof}
We first deal with the error term and the remaining terms will be shown as in the proof of Lemma \ref{tight}, where we decompose this difference into space and time differences.

The error term is $$\frac{1}{N^{2\alpha}}\sum_{k=1}^{\lfloor tN^{2\alpha}\rfloor}\sum_{x'}\psi^z_{\lfloor tN^{2\alpha}\rfloor-k}(x')E_k(x'),\text{ for }z=x \text{ or }y,$$ where $$E_k(x')=\indic_{\{\sum_{y\in\mathcal{N}_k(x')}\eta_{k+1}(y,x)\geq 1\}}-\sum_{y\in\mathcal{N}_k(x')}\eta_{k+1}(y,x).$$
We can decompose $E_k(x')=E_k^{(1)}(x')+E_k^{(2)}(x')$, where
\begin{align*}
E_k^{(1)}(x')&=\E[E_k(x')\mid \F_k],
\end{align*}
satisfying
$$
\left|E_k^{(1)}(x)\right|\leq\left|1-\left(1-\frac{1}{2N}\right)^{N_k(x')}-\frac{N_k(x')}{2N}\right|\leq\frac{N_k(x')^2}{4N^2},
$$
and
\begin{align*}
E_k^{(2)}(x)&=E_k(x')-\E[E_k(x')\mid \F_k].
\end{align*}

With respect to the first term $E_k^{(1)}$, we have
\begin{align*}
&~~~~\E\left|\frac{1}{N^{2\alpha}}\sum_{k=1}^{\lfloor tN^{2\alpha}\rfloor}\sum_{x'}\psi^z_{\lfloor tN^{2\alpha}\rfloor-k}(x')E_k^{(1)}(x')\right|^p\\
&\leq\frac{C(p,T)}{N^{2\alpha}}\sum_{k=1}^{\lfloor tN^{2\alpha}\rfloor}\E\left(\sum_{x'}\psi^z_{\lfloor tN^{2\alpha}\rfloor-k}(x')E_k^{(1)}(x')\right)^p\\
&\leq\frac{C(p,T)}{N^{2\alpha+(2-2\alpha)p}}\sum_{k=1}^{\lfloor tN^{2\alpha}\rfloor}\E\left(\sum_{x'}(A\hat{\xi}_k(x'))^2\psi^z_{\lfloor tN^{2\alpha}\rfloor-k}(x')\right)^p\\
&\leq\frac{C(p,T)}{N^{2\alpha+(2-2\alpha)p}}\sum_{k=1}^{\lfloor tN^{2\alpha}\rfloor}\E\left(\sum_{x}(A\hat{\xi}_k(x'))^{2p}\psi_{\lfloor tN^{2\alpha}\rfloor-k}(x')\right)\cdot\left(\sum_{x'}\psi^z_{\lfloor tN^{2\alpha}\rfloor-k}(x')\right)^{p-1}\\
&\leq\frac{C(p,T)N^{(1+\alpha)(p-1)}}{N^{2\alpha+(2-2\alpha)p}}\sum_{k=1}^{\lfloor tN^{2\alpha}\rfloor}\E\left(\sum_{x'}(A\hat{\xi}_k(x'))^{2p}\psi^z_{\lfloor tN^{2\alpha}\rfloor-k}(x')\right) \text{ (Lemma \ref{psi} (a))}\\
&\leq\frac{C(\lambda,p,f,T)N^{(1+\alpha)(p-1)}}{N^{2\alpha+(2-2\alpha)p}}\sum_{k=1}^{\lfloor tN^{2\alpha}\rfloor}\left(\sum_{x'}\psi^z_{\lfloor tN^{2\alpha}\rfloor-k}(x')e_{\lambda p}(x')\right) \text{ (Lemma \ref{moment} (c))}\\
&\leq C(\lambda,p,f,T)e_{\lambda p}(z)N^{-(1-3\alpha)p} \text{ (Lemma \ref{psi} (b))}.
\end{align*}
Moreover,
$$M_n^{(2)}=\frac{1}{N^{2\alpha}}\sum_{k=1}^{n}\sum_{x'}\psi_{\lfloor tN^{2\alpha}\rfloor-k}(x')E_k^{(2)}(x'),n\leq \lfloor tN^{2\alpha}\rfloor.$$
is a martingale. Hence,
\begin{align*}
\langle M^{(2)}\rangle_{\lfloor tN^{2\alpha}\rfloor}\leq \frac{C}{N^{2+2\alpha}}\sum_{k=1}^{\lfloor tN^{2\alpha}\rfloor}\sum_{x'}(A\hat{\xi}_k(x'))^2(\psi^z_{\lfloor tN^{2\alpha}\rfloor-k}(x'))^2.
\end{align*}
By BDG inequality, we have
\begin{align*}
&~~~~\E\left|\frac{1}{N^{2\alpha}}\sum_{k=1}^{\lfloor tN^{2\alpha}\rfloor}\sum_{x'}\psi^z_{\lfloor tN^{2\alpha}\rfloor-k}(x')E_k^{(2)}(x')\right|^p\\
&\leq\frac{C(p,T)}{N^{(1+\alpha)p-2\alpha(p/2-1)}}\sum_{k=1}^{\lfloor tN^{2\alpha}\rfloor}\E\left(\sum_{x'}(A\hat{\xi}_k(x'))^2(\psi^z_{\lfloor tN^{2\alpha}\rfloor-k}(x'))^2\right)^{\frac{p}{2}}\\
&\leq\frac{C(\lambda,p,T)e_{\lambda p}(z)}{N^{p+2\alpha}}\sum_{k=1}^{\lfloor tN^{2\alpha}\rfloor}N^{\frac{\alpha p}{2}}(\lfloor tN^{2\alpha}\rfloor-k)^{-\frac{p}{4}}
\E\left(\sum_{x'}(A\hat{\xi}_k(x'))^2\psi^z_{\lfloor tN^{2\alpha}\rfloor-k} (x')e_{-2\lambda}(x')\right)^{\frac{p}{2}}\text{ (Lemma \ref{psi} (c))}\\
&\leq\frac{C(\lambda,p,T)e_{\lambda p}(z)}{N^{p+2\alpha}}\sum_{k=1}^{\lfloor tN^{2\alpha}\rfloor}N^{\frac{\alpha p}{2}}(\lfloor tN^{2\alpha}\rfloor-k)^{-\frac{p}{4}}\E\left(\sum_{x'}(A\hat{\xi}_k(x'))^pe_{-\lambda p}(x')\psi^z_{\lfloor tN^{2\alpha}\rfloor-k}(x')\right)\left(\sum_{x'}\psi^z_{\lfloor tN^{2\alpha}\rfloor-k}(x')\right)^{\frac{p}{2}-1}\\
&\leq\frac{C(\lambda,p,f,T)e_{\lambda p}(z)N^{(1+\alpha)\frac{p}{2}}}{N^{p+2\alpha}}\sum_{k=1}^{\lfloor tN^{2\alpha}\rfloor}N^{\frac{\alpha p}{2}}(\lfloor tN^{2\alpha}\rfloor-k)^{-\frac{p}{4}} \text{ (Lemma \ref{psi} (a), (b) and Lemma \ref{moment} (c))}\\
&\leq C(\lambda,p,f,T)e_{\lambda p}(z)N^{-\frac{1-\alpha}{2}p},
\end{align*}
where the third inequality is from the fact that $(\psi^z_k,1)=1$ and Lemma \ref{moment} (c).

To get the estimation of space difference, first we need to deal with
\begin{align*}
&~~~~\E \left| \sum_{k=1}^{\lfloor tN^{2\alpha}\rfloor}\sum_{j\leq k-1}\frac{1}{N^{1-\alpha}}\left(A(\hat{\xi}_{k-1})(\psi^x_{\lfloor tN^{2\alpha}\rfloor-k}-\psi^y_{\lfloor tN^{2\alpha}\rfloor-k}),\hat{\nu}^N_j\right) \right|^p\\
&\leq C(\lambda,p,f,T)\e_{\lambda p}(x)\left( \frac{1}{N^{1-\alpha}}\sum_{k=1}^{\lfloor tN^{2\alpha}\rfloor}\sum_{j\leq k-1}(\lfloor tN^{2\alpha}\rfloor-k+1)^{-\frac{1}{2}}N^{\alpha}|x-y|^{\frac{1}{2}}+N^{\frac{\alpha}{2}}(\lfloor tN^{2\alpha}\rfloor-k+1)^{-\frac{3}{4}} \right)^p\\
&\leq C(\lambda,p,f,T)\e_{\lambda p}(x)\left( N^{(5\alpha-1)p}|x-y|^{\frac{p}{2}}+N^{(4\alpha-1)p} \right)\\
&\leq C(\lambda,p,f,T)\e_{\lambda p}(x)\left( |x-y|^{\frac{p}{2}}+N^{-\alpha p} \right),
\end{align*}
where the last inequality is because of $\alpha=1/5$. Next, we will use BDG inequality to estimate
\begin{align*}
\E\left| M_{\lfloor tN^{2\alpha}\rfloor}^{(2)}(\psi_{\lfloor tN^{2\alpha}\rfloor-\cdot}^x-\psi_{\lfloor tN^{2\alpha}\rfloor\cdot}^y) \right|^p &\leq \E \langle M^{(2)}(\psi_{\lfloor tN^{2\alpha}\rfloor-\cdot}^x-\psi_{\lfloor tN^{2\alpha}\rfloor\cdot}^y) \rangle_{\lfloor tN^{2\alpha}\rfloor}^{\frac{p}{2}}.
\end{align*}
As the argument in (\ref{real proc bracket}),
\begin{align*}
&~~~~ \langle M^{(2)}(\psi_{\lfloor tN^{2\alpha}\rfloor-\cdot}^x-\psi_{\lfloor tN^{2\alpha}\rfloor-\cdot}^y) \rangle_{\lfloor tN^{2\alpha}\rfloor}\\
&\leq \frac{1}{N^{1+\alpha}}\sum_{k=1}^{\lfloor tN^{2\alpha}\rfloor}\sum_{j\leq k-1}\|\psi_{\lfloor tN^{2\alpha}\rfloor-k+1}^x-\psi_{\lfloor tN^{2\alpha}\rfloor-k+1}^y\|_{\lambda}(A(\hat{\xi}_{k-1})e_{-\lambda}(\psi_{\lfloor tN^{2\alpha}\rfloor-k+1}^x+\psi_{\lfloor tN^{2\alpha}\rfloor-k+1}^y),\hat{\nu}^N_j)\\
 &\leq \sum_{k=1}^{\lfloor tN^{2\alpha}\rfloor}\sum_{j\leq k-1}\left( N^{-1}|x-y|^{\frac{1}{2}}(\lfloor tN^{2\alpha}\rfloor-k+1)^{-\frac{1}{2}}k+N^{\frac{\alpha}{2}}(\lfloor tN^{2\alpha}\rfloor-k+1)^{-\frac{3}{4}}k \right)\\
 &\qquad \qquad \qquad \qquad \qquad \qquad \qquad \qquad \qquad \qquad\cdot(A(\hat{\xi}_{k-1})e_{-\lambda}(\psi_{\lfloor tN^{2\alpha}\rfloor-k+1}^x+\psi_{\lfloor tN^{2\alpha}\rfloor-k+1}^y)),\hat{\nu}^N_j)
\end{align*}
Using (b), (c), (d) of Lemma \ref{psi} and (a), (c) of Lemma \ref{moment},
\begin{equation}
\label{attrition mart est}
\begin{aligned}
(A(\hat{\xi}_{k-1})e_{-\lambda}\psi_{\lfloor tN^{2\alpha}\rfloor-k+1}^x),\hat{\nu}^N_j)&\leq \| A^p(\hat{\xi}_{k-1})\|^{\frac{1}{p}}_{-\lambda p}(\psi_{\lfloor tN^{2\alpha}\rfloor-k+1}^x,\hat{\nu}^N_j)\\
&\leq \| A^p(\hat{\xi}_{k-1})\|^{\frac{1}{p}}_{-\lambda p}\sup_{1\leq j\leq \lfloor tN^{2\alpha}\rfloor}(\e_{-\lambda},\hat{\nu}^N_j)\|\psi_{\lfloor tN^{2\alpha}\rfloor-k+1}^x\|_{\lambda}\\
&\leq  \| A^p(\hat{\xi}_{k-1})\|^{\frac{1}{p}}_{-\lambda p}\sup_{1\leq j\leq \lfloor tN^{2\alpha}\rfloor}(\e_{-\lambda},\hat{\nu}^N_j)\e_{\lambda}(x)N^{\alpha}(\lfloor tN^{2\alpha}\rfloor-k+1)^{-\frac{1}{2}}.
\end{aligned}
\end{equation}
Therefore, by using the fact that $\alpha=1/5$,
\begin{align*}
&~~~~\E\left| M_{\lfloor tN^{2\alpha}\rfloor}^{(2)}(\psi_{\lfloor tN^{2\alpha}\rfloor-\cdot}^x-\psi_{\lfloor tN^{2\alpha}\rfloor\cdot}^y) \right|^p \\
&\leq C(\lambda,p,f,T)\e_{\lambda p}(x)\cdot\left(\sum_{k=1}^{\lfloor tN^{2\alpha}\rfloor}  N^{\alpha-1}|x-y|^{\frac{1}{2}}(\lfloor tN^{2\alpha}\rfloor-k+1)^{-1}k+N^{\frac{\alpha}{2}-1}(\lfloor tN^{2\alpha}\rfloor-k+1)^{-\frac{5}{4}}k \right)^{\frac{p}{2}}\\
&\leq C(\lambda,p,f,T)\e_{\lambda p}(x)\left( N^{\frac{5\alpha-1}{2}p}|x-y|^{\frac{p}{4}}+N^{\frac{2\alpha-1}{2}p} \right)\\
&\leq C(\lambda,p,f,T)\e_{\lambda p}(x)\left( |x-y|^{\frac{p}{4}}+N^{-\frac{3\alpha }{2}p} \right).
\end{align*}
Similarly, for the time difference, we first deal with the drift term
\begin{align*}
&~~~~\sum_{k=1}^{\lfloor tN^{2\alpha}\rfloor}\sum_{j\leq k-1}\frac{1}{N^{1-\alpha}}\left( A(\hat{\xi}_{k-1})\psi^y_{\lfloor tN^{2\alpha}\rfloor-k+1},\hat{\nu}^N_j \right)-\sum_{k=1}^{\lfloor sN^{2\alpha}\rfloor}\sum_{j\leq k-1}\frac{1}{N^{1-\alpha}}\left( A(\hat{\xi}_{k-1})\psi^y_{\lfloor sN^{2\alpha}\rfloor-k+1},\hat{\nu}^N_j \right)\\
&=\sum_{k=1}^{\lfloor sN^{2\alpha}\rfloor}\sum_{j\leq k-1}\frac{1}{N^{1-\alpha}}\left( A(\hat{\xi}_{k-1})\left(\psi^y_{\lfloor tN^{2\alpha}\rfloor-k+1}-\psi^y_{\lfloor sN^{2\alpha}\rfloor-k+1}\right) ,\hat{\nu}^N_j\right)\\
&~~~~+\sum_{k=\lfloor sN^{2\alpha}\rfloor+1}^{\lfloor tN^{2\alpha}\rfloor}\sum_{j\leq k-1}\frac{1}{N^{1-\alpha}}\left( A(\hat{\xi}_{k-1})\psi^y_{\lfloor tN^{2\alpha}\rfloor-k+1},\hat{\nu}^N_j \right).
\end{align*}
By (b), (e) of Lemma \ref{psi},  (c) of Lemma \ref{moment} and the fact that $\alpha=1/5$, the $p$-th moment of the first term above can be bounded by
\begin{align*}
&~~~~\E \left| \sum_{k=1}^{\lfloor sN^{2\alpha}\rfloor}\sum_{j\leq k-1}\frac{1}{N^{1-\alpha}}\left( A(\hat{\xi}_{k-1})\left(\psi^y_{\lfloor tN^{2\alpha}\rfloor-k+1}-\psi^y_{\lfloor sN^{2\alpha}\rfloor-k+1}\right) ,\hat{\nu}^N_j\right) \right| \\
&\leq C(\lambda,p,f,T)\e_{\lambda p}(y)\left( \sum_{k=1}^{\lfloor sN^{2\alpha}\rfloor} N^{\frac{5}{2}\alpha-1}|t-s|^{\frac{1}{2}}(\lfloor sN^{2\alpha}\rfloor-k+1)^{-\frac{3}{4}}k+N^{\frac{3}{2}\alpha-1}(\lfloor sN^{2\alpha}\rfloor-k+1)^{-\frac{3}{4}}k \right)^p\\
&\leq C(\lambda,p,f,T)\e_{\lambda p}(y)\left( N^{(5\alpha-1)p}|t-s|^{\frac{p}{2}}+N^{(4\alpha-1)p} \right)\\
&\leq C(\lambda,p,f,T)\e_{\lambda p}(y)\left( |t-s|^{\frac{p}{2}}+N^{-\alpha p} \right).
\end{align*}
By (b), (c) of Lemma \ref{psi}, (c) of Lemma \ref{moment} and the fact that $\alpha=1/5$, the $p$-th moment of the second term above can be bounded by
\begin{align*}
&~~~~\E \left| \sum_{k=\lfloor sN^{2\alpha}\rfloor+1}^{\lfloor tN^{2\alpha}\rfloor}\sum_{j\leq k-1}\frac{1}{N^{1-\alpha}}\left( A(\hat{\xi}_{k-1})\psi^y_{\lfloor tN^{2\alpha}\rfloor-k+1},\hat{\nu}^N_j \right) \right|^p\\
&\leq C(\lambda,p,f,T)\e_{\lambda p}(y) \left( \sum_{k=\lfloor sN^{2\alpha}\rfloor+1}^{\lfloor tN^{2\alpha}\rfloor}N^{2\alpha-1}(\lfloor tN^{2\alpha}\rfloor-k+1)^{-\frac{1}{2}}k \right)^p\\
&\leq C(\lambda,p,f,T)\e_{\lambda p}(y) \left( N^{(5\alpha-1)p}|t-s|^{\frac{p}{2}} \right)\\
&\leq C(\lambda,p,f,T)\e_{\lambda p}(y)|t-s|^{\frac{p}{2}}.
\end{align*}
To deal with the part of $M^{(2)}_{\lfloor tN^{2\alpha}\rfloor}(\psi^y_{\lfloor tN^{2\alpha}\rfloor-\cdot})-M^{(2)}_{\lfloor sN^{2\alpha}\rfloor}(\psi^y_{\lfloor sN^{2\alpha}\rfloor-\cdot})$, we can separate it into two parts and use BDG inequality.

The first part is $M^{(2)}_{\lfloor sN^{2\alpha}\rfloor}(\psi^y_{\lfloor tN^{2\alpha}\rfloor-\cdot}-\psi^y_{\lfloor sN^{2\alpha}\rfloor-\cdot})$ with quadratic variation
\begin{align*}
&~~~~\langle M^{(2)}(\psi^y_{\lfloor tN^{2\alpha}\rfloor-\cdot}-\psi^y_{\lfloor sN^{2\alpha}\rfloor-\cdot}) \rangle_{\lfloor sN^{2\alpha}\rfloor}\\
&\leq \sum_{k=1}^{\lfloor sN^{2\alpha}\rfloor}\sum_{j\leq k-1} \frac{\|\psi^y_{\lfloor tN^{2\alpha}\rfloor-k+1}-\psi^y_{\lfloor sN^{2\alpha}\rfloor-k+1}\|_{\lambda}}{N^{1+\alpha}}\left(  A(\hat{\xi}_{k-1})e_{-\lambda}(\psi^y_{\lfloor tN^{2\alpha}\rfloor-k+1}+\psi^y_{\lfloor sN^{2\alpha}\rfloor-k+1}),\hat{\nu}^N_j\right)\\
&\leq \sum_{k=1}^{\lfloor sN^{2\alpha}\rfloor}\left( N^{\frac{\alpha}{2}-1}|t-s|^{\frac{1}{2}}(\lfloor sN^{2\alpha}\rfloor-k+1)^{-\frac{3}{4}}k+N^{-1-\frac{\alpha}{2}}(\lfloor sN^{2\alpha}\rfloor-k+1)^{-\frac{3}{4}}k\right)\\
&\qquad \qquad \qquad \qquad \qquad \qquad \qquad \qquad \qquad \qquad\cdot\left(  A(\hat{\xi}_{k-1})e_{-\lambda}(\psi^y_{\lfloor tN^{2\alpha}\rfloor-k+1}+\psi^y_{\lfloor sN^{2\alpha}\rfloor-k+1}),\hat{\nu}^N_j\right).
\end{align*}
Inequality (\ref{attrition mart est}) gives us
\begin{align*}
&~~~~\E \left| M^{(2)}_{\lfloor sN^{2\alpha}\rfloor}(\psi^y_{\lfloor tN^{2\alpha}\rfloor-\cdot}-\psi^y_{\lfloor sN^{2\alpha}\rfloor-\cdot}) \right|^p \\
&\leq C(\lambda,p,f,T)\e_{\lambda p}(y)\cdot\left(\sum_{k=1}^{\lfloor sN^{2\alpha}\rfloor} N^{\frac{3}{2}\alpha-1}|t-s|^{\frac{1}{2}}(\lfloor sN^{2\alpha}\rfloor-k+1)^{-\frac{5}{4}}k+N^{\frac{\alpha}{2}-1}(\lfloor sN^{2\alpha}\rfloor-k+1)^{-\frac{5}{4}}k\right)^{\frac{p}{2}}\\
&\leq C(\lambda,p,f,T)\e_{\lambda p}(y)\left( N^{\frac{3.5\alpha-1}{2}p}|t-s|^{\frac{p}{4}}+N^{\frac{2.5\alpha-1}{2}p}\right)\\
&\leq C(\lambda,p,f,T)\e_{\lambda p}(y)\left( N^{-\frac{3\alpha}{4}p}|t-s|^{\frac{p}{4}}+N^{-\frac{5\alpha}{4}p} \right).
\end{align*}
The second part is $M^{(2)}_{\lfloor tN^{2\alpha}\rfloor}(\psi^y_{\lfloor tN^{2\alpha}\rfloor-\cdot})-M^{(2)}_{\lfloor sN^{2\alpha}\rfloor}(\psi^y_{\lfloor tN^{2\alpha}\rfloor-\cdot})$ with quadratic variation
\begin{align*}
~~~~&\langle M^{(2)}(\psi^y_{\lfloor tN^{2\alpha}\rfloor}-\cdot)\rangle_{\lfloor tN^{2\alpha}\rfloor}-\langle M^{(2)}(\psi^y_{\lfloor tN^{2\alpha}\rfloor}-\cdot)\rangle_{\lfloor sN^{2\alpha}\rfloor}\\
&\leq \sum_{k=\lfloor sN^{2\alpha}\rfloor+1}^{\lfloor tN^{2\alpha}\rfloor}\sum_{j\leq k-1}\frac{\|\psi^y_{\lfloor tN^{2\alpha}\rfloor-k+1}\|_{\lambda}}{N^{1+\alpha}}\left( A(\hat{\xi}_{k-1})e_{-\lambda}\psi^y_{\lfloor tN^{2\alpha}\rfloor-k+1},\hat{\nu}^N_j \right)\\
&\leq \sum_{k=\lfloor sN^{2\alpha}\rfloor}^{\lfloor tN^{2\alpha}\rfloor}\sum_{j\leq k-1}N^{-1}(\lfloor tN^{2\alpha}\rfloor-k+1)^{-\frac{1}{2}}\left( A(\hat{\xi}_{k-1})e_{-\lambda}\psi^y_{\lfloor tN^{2\alpha}\rfloor-k+1},\hat{\nu}^N_j \right).
\end{align*}
Inequality (\ref{attrition mart est}) again gives us
\begin{align*}
&~~~~\E \left| M^{(2)}_{\lfloor tN^{2\alpha}\rfloor}(\psi^y_{\lfloor tN^{2\alpha}\rfloor-\cdot})-M^{(2)}_{\lfloor sN^{2\alpha}\rfloor}(\psi^y_{\lfloor tN^{2\alpha}\rfloor-\cdot}) \right|^p \\
&\leq C(\lambda,p,f,T)\e_{\lambda p}(y)\left( \sum_{k=\lfloor sN^{2\alpha}\rfloor+1}^{\lfloor tN^{2\alpha}\rfloor} N^{\alpha-1}(\lfloor tN^{2\alpha}\rfloor-k+1)^{-1}k \right)^{\frac{p}{2}}\\
&\leq C(\lambda,p,f,T)\e_{\lambda p}(y)\left( N^{\frac{5\alpha-1}{2}p}|t-s|^{\frac{p}{2}} \right)\\
&\leq C(\lambda,p,f,T)\e_{\lambda p}(y) |t-s|^{\frac{p}{2}}.
\end{align*}
Combining with Lemma \ref{tight}, we get (\ref{eqn: true tightness}).
\end{proof}
The tightness of $A(\hat{\xi}_{\lfloor tN^{2\alpha}\rfloor})$ follows from Lemma \ref{real proc tight}, which means that we can find a subsequence with a limit $\hat{u}_t$. Since the true process is dominated by the branching envelope, we easily see that Lemma \ref{measure density} also holds for the true horizontal process. This implies the tightness of $\hat{\nu}^N_{\lfloor tN^{2\alpha}\rfloor}$ under vague topology. Let $\hat{\nu}_t$ be a weak limit. By substituting $\phi^N_k=\phi\in C^2_0(\R)$ in the semimartingale decomposition (\ref{real proc decomp}) and Lemma \ref{small error}, if $\alpha=1/5$, we can see that the martingale $\hat{M}^N_{\lfloor tN^{2\alpha}\rfloor}$ can be written as
\begin{align*}
\hat{M}^N_{\lfloor tN^{2\alpha}\rfloor}(\phi)=(\hat{\nu}^N_{\lfloor tN^{2\alpha}\rfloor},\phi)-(A(\hat{\xi}_0),\phi)-\sum_{k=1}^{\lfloor tN^{2\alpha}\rfloor}(\hat{\nu}^N_k,\Delta_D \phi)-\sum_{k=1}^{\lfloor tN^{2\alpha}\rfloor}\sum_{j\leq k-1}\frac{1}{N^{1-\alpha}}(A(\hat{\xi}_{k-1}\phi),\hat{\nu}^N_j)+O(N^{-2/5})
\end{align*}
and every term on the right-hand side converges almost surely by Lemma \ref{real proc tight}. Hence $\hat{M}^N_{\lfloor tN^{2\alpha}\rfloor}(\phi)$ converges to a local martingale
\begin{equation}
\label{true decomp cont}
\begin{split}
\hat{m_t}(\phi)&=(\hat{\nu}_t,\phi)-(\hat{\nu}_0,\phi)-\frac{1}{6}\int_0^t (\hat{\nu}_s,\Delta\phi)ds-\int_0^t\left( \hat{\nu}_s,\int_0^s \hat{u}_r\phi\right)ds\\
&=\int \phi(x)\hat{u}_t(x)dx-\int \phi(x)f(x)dx-\frac{1}{6}\int_0^t \int \Delta \phi(x)\hat{u}_s(x)dxds-\int_0^t\int_0^s \int \phi(x) \hat{u}_s(x)\hat{u}_r(x)dxdrds,
\end{split}
\end{equation}
which is continuous since every term on the right-hand side is continuous. Moreover, from (\ref{real proc bracket}),
\begin{align*}
(\hat{M}^N_{\lfloor tN^{2\alpha}\rfloor})^2-\sum_{k=1}^{\lfloor tN^{2\alpha}\rfloor}\frac{1}{N^{2\alpha}}(A(\hat{\xi}_{k-1}),\phi^2)\left(1-\frac{1}{2N}\right)-\sum_{k=1}^{\lfloor tN^{2\alpha}\rfloor}\sum_{j\leq k-1}\frac{1}{N^{1+\alpha}}(A(\hat{\xi}_{k-1})\phi^2,\hat{\nu}^N_j)\left(1-\frac{1}{2N}\right)
\end{align*}
is a martingale. As $N\rightarrow \infty$,
\begin{equation}
\label{true quad mart}
\hat{m}_t^2(\phi)-\int_0^t\int \phi^2(x)\hat{u}_s(x)dxds
\end{equation}
is also a continuous local martingale. (\ref{true decomp cont}) and (\ref{true quad mart}) prove that any subsequential weak limit $\hat{\nu}_t(dx)=\hat{u}_t(x)dx$ solves (\ref{real spde}).

\subsection{Girsanov transformation. Proof of the uniqueness in Theorem \ref{thm real spde}}
\label{sec: uniqueness}
As is discussed in Section \ref{tightness}, the envelope measure $\nu_t$ solves the martingale problem: $\forall \phi\in C_0^2(\mathbb{R})$ test function twice differentiable with compact support, the process
$$m_t(\phi)=(\nu_t,\phi)-(\nu_0,\phi)-\frac{1}{6}\int_0^t (\nu_s, \Delta \phi)ds$$
is a continuous local martingale with quadratic variation process
$$\langle m(\phi)\rangle_t=\int_0^t (\nu_s,\phi^2)ds.$$
From this, we know that
\begin{align*}
\e^{-(\nu_t,\phi)}-\e^{-(\nu_0,\phi)}-\int_0^t \e^{-(\nu_s,\phi)}\left( \nu_s,-\frac{1}{6}\Delta \phi+\phi^2 \right)ds
\end{align*}
is a continuous local martingale. Using the duality method in Section 4.4 of \cite{ek}, we can choose triplet $(h,0,0)$ on the space $\mathcal{M}_F\times C_0^2$, where $\mathcal{M}_F$ is the collection of finite Borel measures and $h(\cdot,\cdot)$ is defined as $$h(\nu,\phi)=\e^{-(\nu,\phi)}.$$
Then $$\E h(\nu_t,\phi)=h(\nu_0,u_t^*),$$ $u_s^*$ is the solution to the deterministic equation
\begin{align}
\label{dual eqn}
\begin{cases}
&\frac{\partial u_t^*}{\partial t} = \frac{1}{6}\Delta u_t^*-(u_t^*)^2  \\
 & u_0^* = \phi .
\end{cases}
\end{align}
$\{u_t^*\}_{t\geq 0}$ is the dual process of the solution to the martingale problem. The existence of solution to (\ref{dual eqn}) gives the uniqueness of $\{ \nu_t \}_{t\geq 0}$.

Let $m(ds,dx)$ be the orthogonal martingale measure of $m_t(\cdot)$, which means that it is of intensity measure $$\nu((0,t]\times A)=\int_A \int_0^t u_s(x)dsdx,$$ for any Borel measurable set  $A\subset\mathbb{R}$. Then the Radon-Nykodym derivative of the true process with respect to the envelope is
\begin{align}
\label{r-n derivative}
\left.\frac{d\Q}{d\mathbb{P}}\right|_t=\exp \left\{ -\int \int_0^t \theta(s,x)m(ds,dx)-\frac{1}{2} \int_0^t (u_s,\theta(s,\cdot)^2)ds \right\},
\end{align}
where the drift term
$$\theta(s,x)=\int_0^s u_r(x)dr.$$
The uniqueness of $\{ \hat{\nu}_t\}_{t\geq 0}$ follows directly from the uniqueness of $\{ \nu_t \}_{t\geq 0}$. This concludes the proof of Theorem \ref{thm real spde}.

\section{Existence of Percolation}
\label{sec:existence}
In the past two sections, we have shown that $\alpha=1/5$ (in the sense of Theorem \ref{thm real spde}) is a critical exponent for the horizontal process. The envelope process on each horizontal layer follows the law with asymptotic approximate density  given by the solution of (\ref{eqn: brw spde}). In the anisotropic percolation model, the horizontal movement has an attrition compared to the envelope process. The attrition comes from two parts:
\begin{itemize}
\item In the envelope process, it is allowed to have multiple particles at each site. However, in the true mechanism, we only consider if a site is occupied or not hence the configuration at each site can only take values in  $0$ or $1$. Fortunately, the probability of multiple particles is negligible when $\alpha=1/5$ (Corollary \ref{cor: multiple}).
\item As was explained in the Introduction, the vertical interaction should be only considered once for any site in the anisotropic percolation. When we consider the horizontal movement, any site that has been visited before cannot be visited again. Under the critical exponent $\alpha=1/5$, this attrition becomes significant and leads to the part
$$-\hat{u_t}\int_0^t\hat{u}_sds$$
 in the asymptotic approximate density.
\end{itemize}
In this section we prove Theorem \ref{thm:main}, by investigating the occurrence (or not) of percolation when on each layer we have the true model, and the vertical bonds between neighbouring sites are open with probability $p_v=\kappa N^{-2/5}$, all independently.

\subsection{The case $\kappa<C_1$}
\label{sec:sub}
As we have discussed in Section \ref{intro}, the occupied sites at each layer follow a horizontal process with attrition whose asymptotic approximate density follows the SPDE (\ref{real spde}). Here we abuse the notation $\mathcal{C}^i_x$ as the cluster starting from $x$ at layer $i$. in the rescaled space $\Z/N^{6/5}\times \Z$. The main theorem to show in this subsection is as follows.
\begin{theorem}
\label{thm:sub}
For the horizontal process with attrition, there exists a constant $L$ such that the cumulated number of occupied sites (or the cluster size) starting from zero satisfies
$$\E |\mathcal{C}^0_0|\leq LN^{2/5}.$$
\end{theorem}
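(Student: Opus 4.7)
The key point is that although the branching random walk envelope of Section \ref{sec:envelope} has \emph{infinite} expected total size when started from a single particle (it is critical), the attrition brings finiteness at the correct scale $N^{2/5}$.

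My plan is to write $|\mathcal{C}^0_0|=\sum_{k\geq 0} S_k$ with $S_k=\sum_x\hat\xi_k(x)$, and to decompose at the natural critical time $k^\ast=\lfloor CN^{2/5}\rfloor$. Below $k^\ast$ the envelope domination $S_k\leq Z_k$ of Section \ref{sec:envelope} together with $\E Z_k=1$ (critical Galton--Watson with $\mathrm{Bin}(2N,1/(2N))$ offspring) gives
\[
\E\sum_{k\leq k^\ast}S_k\leq k^\ast+1=O(N^{2/5}),
\]
which already exhausts the desired budget. Above $k^\ast$ a naive envelope bound is useless, so I would use the attrition budget: from (\ref{real mech}) the one-step mean satisfies
\[
\E[S_{k+1}\mid\F_k]=S_k-\tfrac{1}{2N}\sum_y\hat\xi_k(y)W_k(y)+\mathrm{err}_k,
\]
where $W_k(y)=|\{x\sim y:V_k(x)>0\}|$ counts previously visited $N$-neighbours of $y$ and $\mathrm{err}_k$ is negligible by Lemma \ref{cor: multiple}. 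Since $S_\infty=0$ a.s., telescoping yields the global bound
\[
\sum_{k\geq 0}\E\Bigl[\tfrac{1}{2N}\sum_y\hat\xi_k(y)W_k(y)\Bigr]\leq 1.
\]

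I would then combine this budget with Kolmogorov's survival estimate $\P(Z_{k^\ast}>0)\leq 2/k^\ast=O(N^{-2/5})$ together with the spatial scaling of the conditioned cluster. Conditional on the envelope being alive at $k^\ast$, the Yaglom-type picture says it has $\sim N^{2/5}$ particles spread over a window of diameter $\sim \sqrt{k^\ast}\,N=N^{6/5}$ (diffusive spread of critical BRW with step scale $N$). Thus the cumulative number of visited sites $V_k$ satisfies $V_k/N^{6/5}\gtrsim N^{-2/5}$ per site, so a typical occupied $y$ has $W_k(y)\gtrsim N^{1/5}$. Feeding this lower bound into the mean dynamics, the ODE picture $\dot V=S$, $\dot S\lesssim -SV/N^{6/5}$ gives $|\mathcal{C}|\lesssim \sqrt{N^{6/5}\cdot S_{k^\ast}}\lesssim N^{4/5}$ on the surviving event, and multiplying by the survival probability $O(N^{-2/5})$ matches the desired $O(N^{2/5})$ bound for the tail.

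The main obstacle is making the ``typical spread'' statement quantitative in the pre-limit rather than just at the level of the SPDE (\ref{real spde}), for which the hypothesis of Theorem \ref{thm real spde} (convergence of $A(\hat\xi_0)$ to a nice $f$) does not apply to a starting single particle. I would handle this by a stopping-time argument at level $\tau_M:=\inf\{k:S_k\geq MN^{1/5}\}$, treating trajectories with $\tau_M\leq k^\ast$ via the attrition lower bound above and those with $\tau_M>k^\ast$ (atypically sparse) via envelope domination plus the Kolmogorov estimate. The moment and Green-kernel estimates of Lemmas \ref{psi}--\ref{moment} already proved in Section \ref{sec:envelope} should give the uniform-in-$N$ tail control required for both sides of the dichotomy to balance at order $N^{2/5}$.
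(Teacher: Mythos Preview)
Your decomposition at $k^\ast$ and the bound $\E\sum_{k\le k^\ast}S_k\le k^\ast+1$ are correct and match the first half of the paper's argument. The gap is entirely in the tail $\sum_{k>k^\ast}S_k$.

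The telescoping identity $\sum_k\E\bigl[(2N)^{-1}\sum_y\hat\xi_k(y)W_k(y)\bigr]\le 1$ is a genuine constraint, but it is a \emph{global budget on the killing}, not a bound on $|\mathcal{C}^0_0|$; it does not by itself prevent $|\mathcal{C}^0_0|$ from being large with moderate probability. Your ODE picture $\dot V=S$, $\dot S\lesssim -SV/N^{6/5}$ is the correct heuristic (and matches the drift in (\ref{real spde})), but turning it into an expectation bound requires controlling the tails of $S_{k^\ast}$ and the spatial spread simultaneously, uniformly in $N$. The proposed fix via $\tau_M=\inf\{k:S_k\ge MN^{1/5}\}$ (the exponent should presumably be $2/5$) does not close the loop: on the event $\{\tau_M>k^\ast,\ S_{k^\ast}>0\}$ you appeal to ``envelope domination plus Kolmogorov'', but for the envelope one has $\E\bigl[\sum_{k>k^\ast}Z_k\mid\F_{k^\ast}\bigr]=\infty$ whenever $Z_{k^\ast}>0$, so Kolmogorov's $\P(Z_{k^\ast}>0)=O(N^{-2/5})$ buys nothing by itself. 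You are forced back to the attrition on that event too, which is precisely the regime the dichotomy was meant to avoid. The estimates of Lemmas~\ref{psi}--\ref{moment} also do not apply here, since they assume $A(\xi_0)\to f$ with $f$ a fixed nontrivial profile, not a single particle.

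The paper's route is structurally different. Instead of splitting at a fixed time, it decomposes by hitting levels: with $\hat T_k=\inf\{n:\hat X_n\ge 2^k\text{ or }\sum_{m\le n}\hat X_m\ge 2^{2k}\}$ one has $\E|\mathcal{C}^0_0|\le\sum_k 2^{2(k+1)}\P(\hat T_k<\infty)$. For $2^k\le M_1N^{2/5}$ the envelope bound $\P(\tilde T_k<\infty)\le C2^{-k}$ (Lemma~\ref{finite Tk discrete}) already sums to $O(N^{2/5})$. For larger $k$ the paper proves a one-step contraction (Proposition~\ref{prop 1/8}): $\P(\hat T_{k+1}<\infty\mid\hat T_k<\infty)\le 1/8$, so each further doubling costs a factor $8^{-1}$, beating the $4$ from $2^{2(k+1)}$. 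The engine behind this is Proposition~\ref{extraprop}: starting from a single particle, $\E[\hat X_{M_3N^{2/5}}]<\delta$ for $M_3$ large. Its proof does not use the ODE; it conditions on the process having survived with bounded displacement (via Kesten's maximal-displacement estimate for critical BRW), locates a spatial interval carrying $\gtrsim\epsilon^3N^{4/5}$ previously visited sites, and then shows by a second-moment random-walk hitting argument that each surviving lineage is killed with uniformly positive probability on every excursion through that interval. This killing-zone argument is the missing ingredient in your outline: it converts the qualitative attrition into a quantitative contraction of $\E\hat X_n$ over a time window of length $O(N^{2/5})$, which is exactly what makes the level-set sum converge and what your time-split cannot supply.
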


Before proving the main theorem, let us show how it implies that there is no percolation when $\kappa<C_1$ for $C_1$ small enough.
\begin{corollary}
\label{cor:sub}
Let $p_v=\kappa N^{-2/5}$ denote the probability of a vertical edge being open. There exists $C_1$ such that for $\kappa<C_1$, there is no percolation in the anisotropic percolation system for all $N$ large.
\end{corollary}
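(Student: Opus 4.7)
The plan is to dominate the full cluster $\mathcal{C}_{\mathbf{0}}$ of $(0,0)$ in the layered graph by the total progeny of a subcritical Galton--Watson process, using Theorem~\ref{thm:sub} to bound each generation. First I would organize the exploration of $\mathcal{C}_{\mathbf{0}}$ as a breadth-first search on seeds: start with the single seed $(0,0)$; each time a seed $(x,i)$ is popped, reveal its full horizontal cluster in layer $i$ (using only the still-unexposed horizontal edges in that layer, which are independent Bernoulli$(1/(2N))$); then, for every site of that horizontal cluster, expose its two vertical edges and enqueue as new seeds those open vertical endpoints in layers $i\pm 1$ that are not already in the explored region. This BFS terminates exactly when every site of $\mathcal{C}_{\mathbf{0}}$ has been added.

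Second, I would couple this exploration with an auxiliary Galton--Watson process $\mathcal{T}$ whose individuals are seeds, and in which, when a seed is popped, we produce children by (i)~growing an \emph{independent fresh copy} of the single-layer horizontal cluster starting from that seed (using brand-new horizontal randomness rather than the already-revealed edges) and (ii)~for each site of that fresh copy, opening two independent Bernoulli$(p_v)$ vertical edges with fresh randomness. Because the true BFS only explores sites in the complement of the previously revealed region, while all previously exposed boundary horizontal edges are known to be closed, the true number of new seeds produced at a BFS step is stochastically bounded by the number produced by the fresh-copy step. Hence $|\mathcal{C}_{\mathbf{0}}|$ is stochastically dominated by the total progeny $|\mathcal{T}|$, and the individuals of $\mathcal{T}$ branch i.i.d.

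Third, I would compute the per-individual offspring mean. By translation invariance and Theorem~\ref{thm:sub}, the fresh horizontal cluster from any seed has expected size at most $L N^{2/5}$. Each of its sites has two vertical edges, each open with probability $p_v = \kappa N^{-2/5}$ independently of the horizontal randomness, so the mean number of children of any individual of $\mathcal{T}$ is bounded by
$$\mu := 2 p_v \cdot L N^{2/5} = 2\kappa L.$$
Setting $C_1 := 1/(2L)$, for $\kappa < C_1$ we have $\mu < 1$, so $\mathcal{T}$ is subcritical; in particular $\E|\mathcal{T}| \leq L N^{2/5}/(1 - 2\kappa L) < \infty$ uniformly in $N$, so $|\mathcal{C}_{\mathbf{0}}| < \infty$ almost surely and there is no percolation for all $N$ large.

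The main obstacle will be to rigorously justify the stochastic domination in the coupling: one must specify the BFS filtration precisely, verify that the still-unexposed horizontal edges remain independent Bernoulli$(1/(2N))$ when each new cluster is grown, and argue that replacing the true cluster (which is constrained by previously exposed closed boundary edges and already-explored sites) by an i.i.d.\ fresh copy only enlarges the set of new seeds generated. Once this domination is in place, the corollary reduces to the one-line branching-process computation above.
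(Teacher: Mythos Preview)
Your proposal is correct and follows essentially the same approach as the paper: both dominate the exploration of the full cluster by a Galton--Watson process whose offspring law is ``grow an i.i.d.\ copy of the single-layer horizontal cluster, then open two independent Bernoulli$(p_v)$ vertical edges at each of its sites,'' and both conclude by noting that the offspring mean $2p_v\cdot LN^{2/5}=2\kappa L$ is less than $1$ when $\kappa<C_1=(2L)^{-1}$. The paper indexes the branching process by the number of vertical moves (its $\mathcal{S}_m$ and $Z_m$) rather than by BFS seeds, but the content is the same; your discussion of the filtration and the stochastic-domination step is in fact more careful than the paper's, which simply asserts the domination.
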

\begin{proof}[Proof of Corollary \ref{cor:sub}]
Recall that the horizontal edges, i.e. edges between $(x,i)$ and $(y,i)$ for some $i$ and $x\sim y$, are open with probability $1/(2N)$, while the vertical ones between $(x,i)$ and $(x,j)$ for some $x$ and $|j-i|=1$ are open with probability $p_v$, all independently. 
We say that there is a path from $(x,i)$ at layer $i$ to $(y,k)$ at layer $k$ denoted by $(x,i)\rightarrow (y,k)$ if there is $n$ and $x_j,i_j,1\leq j\leq n$ so that $(x_1,i_1)=(x,i)$, $(x_n,i_n)=(y,k)$ and $\forall 1\leq j\leq n-1$, the edge between $(x_j,i_j)$ and $(x_{j+1},i_{j+1})$ is open.

We want to explore all sites that are connected to $(0,0)$, i.e. that can be reached by an open path from $(0,0)$. Once an open path reaches layer $i$, it can continue through vertical neighbours at layers $i\pm 1$, moving upward or downward; we can count the number of connected sites with a certain number of vertical movements from layer $0$ rather than its layer number.

After $n$ movements which contain $m$ vertical movements (upward or downward), there is a collection of open paths from the origin $(x_0,i_0)=(0,0)\rightarrow(x_n,i_n)$. Let $I_v\subset{1,2,\cdots,n}$ be the set of vertical movements such that $|I_v|=m$ and $\forall k\in I_v,|i_k-i_{k-1}|=1,x_k=x_{k-1}$. For $k\in \{1,2,\cdots,n\}\backslash I_v$ i.e. the horizontal movement indices, $i_k=i_{k-1}, x_k\sim x_{k-1}$. Denote $\mathcal{S}_m$ as the collection of points which are the ends of these paths from the origin after $m$ vertical movements (with any number $n\geq m$ of total movements). 

These sites are possibly to be distributed on different layers.
In the development of $\{\mathcal{S}_m\}_{m\geq 0}$, we consider the horizontal movements and vertical movements separately at each time. More precisely (ref. Figure \ref{fig:sub}), we start with $(0,0)$, and following the law $\mathcal{C}^0_0$ we produce connected sites at layer $0$. In the first vertical movement, these sites at layer $0$ can connect to sites at layer $\pm1$. Before the second vertical movement, these connected sites at layers $\pm1$ will produce an horizontal cluster following the law of $\mathcal{C}^0_0$ at its layer, which will then connect to sites at layers $\pm2$ and $0$. $\mathcal{S}_m$ can be constructed inductively by considering the total number of horizontal connected sites and then their vertical movements.

\begin{figure}
\center
\includegraphics[scale=0.7]{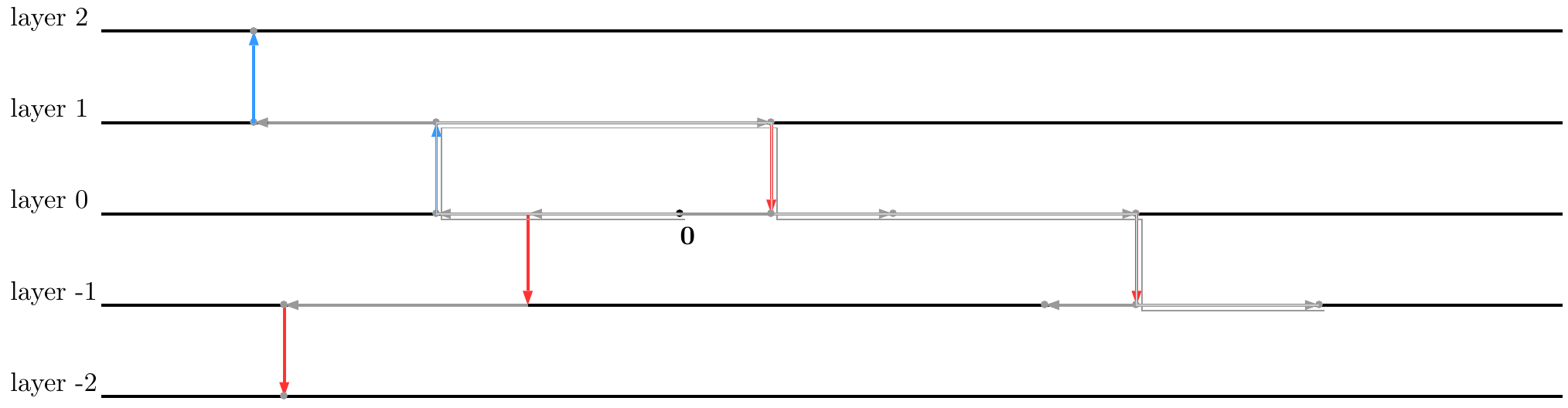}
\caption{Movement of $\mathcal{S}_m$}
\label{fig:sub}
\end{figure}

Due to attrition, in the horizontal connection we only consider a site to be occupied or not, rather than the number of particles at each site,
the cardinality $\{|\mathcal{S}_m|\}_{m \geq 0}$ is stochastically dominated (in the sense of Definition II.2.3 of \cite{liggett}) by a branching process $\{Z_m\}_{m\geq 0}$ following the law
\begin{align*}
Z_{m+1}=\sum_{i=1}^{Z_m}Y_{m,i}, &\text{ where }Y_{m,i} \sim\text{Binomial}(2\mathcal{N}_{m,i},p_v) \text{ for }1\leq i\leq Z_m,\\
&\mathcal{N}_{m,i}\text{ is independent of }Z_1,\cdots,Z_m \text{ for each }i,m\\
&\text{and }(\mathcal{N}_{m,i})_{m,i}\text{ is an i.i.d. sequence with distribution as }|\mathcal{C}_0^0|.
\end{align*}

Theorem \ref{thm:sub} gives the upper bound of $\E|\mathcal{C}_0^0|$ to be $2LN^{2/5}$.
When $\kappa$ is small enough to make $2\kappa L<1$, $\{Z_m\}_{m\geq 0}$ is a sub-critical branching process which will die out (ref. Theorem A.5.1 of \cite{branching}). Therefore, there exists positive constant $C_1=(2L)^{-1}$ such that for $\kappa<C_1$, there is no percolations in this layered system.
\end{proof}

We now move to the proof of Theorem \ref{thm:sub}.  For this we will need two inequalities (Lemma \ref{finite Tk discrete} and Proposition \ref{prop 1/8} below) which concern the following hitting times for the branching envelope and for the true horizontal process:
\begin{align*}
\tilde{T}_k=\inf\left\{n: \sum_x \xi_n(x)\geq 2^k \text{ or }\sum_{i=1}^n\sum_x \xi_i(x)\geq 2^{2k}\right\},
\end{align*}
which is just the discrete version of the hitting time $T_k$ in Section \ref{sec:multiple}, and 
\begin{align*}
\hat{T}_k=\inf\left\{n: \sum_x \hat{\xi}_n(x)\geq 2^k \text{ or }\sum_{i=1}^n\sum_x \hat{\xi}_i(x)\geq 2^{2k}\right\}=\hat{T}'_k\wedge \hat{T}''_k,
\end{align*}
where
\begin{align*}
\hat{T}'_k=\inf\{n:\sum_x \hat{\xi}_n(x)\geq 2^k\}, \hat{T}''_k=\inf \left\{n:\sum_{i=0}^n \sum_x \hat{\xi}_i(x)\geq 2^{2k}  \right\}.
\end{align*}

\begin{lemma}
\label{finite Tk discrete}
Suppose $\xi_0(x)=1$ if $x=0$ and $\xi_0(x)=0$ otherwise, then we have
$$\P(\tilde{T}_k<\infty)<C2^{-k}.$$
\end{lemma}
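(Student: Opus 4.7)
\medskip
\noindent\emph{Proof proposal.} The plan is to adapt the level-doubling argument of Lemma~\ref{finite Tk} to the discrete branching envelope. Let $X_n := \sum_x \xi_n(x)$ denote the total population; it is a critical Galton--Watson chain with offspring distribution $\mathrm{Bin}(2N,1/(2N))$, starting from $X_0 = 1$, and hence a non-negative $(\F_n)$-martingale whose conditional offspring variance is $\sigma^2 := 1 - 1/(2N) \in [\tfrac{1}{2},1]$. Writing $\tilde{T}_k = \tilde{T}'_k \wedge \tilde{T}''_k$, we have $\P(\tilde{T}_k < \infty) \le \P(\tilde{T}'_k < \infty) + \P(\tilde{T}''_k < \tilde{T}'_k)$, and Doob's maximal inequality for the non-negative martingale $X_n$ immediately gives $\P(\tilde{T}'_k < \infty) = \P(\sup_n X_n \ge 2^k) \le 2^{-k}$.

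For the remaining term I mimic the decomposition used in the proof of Lemma~\ref{finite Tk}. Set $\tau_j := \tilde{T}'_j = \inf\{n : X_n \ge 2^j\}$ and, for $j = 0,\dots, k-1$,
$$A_j := \Bigl\{\tau_j < \infty,\; S_j \ge \tfrac{6 \cdot 2^{2k}}{\pi^2 (k-j)^2}\Bigr\}, \qquad S_j := \sum_{i=\tau_j}^{\tau_{j+1}-1} X_i.$$
On $\{\tilde{T}''_k < \tilde{T}'_k\}$ the chain dies without ever attaining $2^k$, so $\bigcup_{j=0}^{k-1} [\tau_j, \tau_{j+1}) \supset [0, \tilde{T}''_k]$ and $\sum_j S_j \ge 2^{2k}$; since $\sum_{j=0}^{k-1} 6\pi^{-2}(k-j)^{-2} \le 1$, pigeon-hole forces $\{\tilde{T}''_k < \tilde{T}'_k\} \subset \bigcup_j A_j$. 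The bulk of the work is the conditional estimate $\E[S_j \mid \F_{\tau_j}] \le C \cdot 2^{2j}$ on $\{\tau_j < \infty\}$, which I would derive from the Doob--Meyer martingale $M_n := X_n^2 - \sigma^2 \sum_{i=0}^{n-1} X_i$: after the strong Markov shift at $\tau_j$, optional stopping at $\tau_{j+1} \wedge m$ gives $\E[X_{\tau_{j+1}\wedge m}^2] = X_{\tau_j}^2 + \sigma^2 \E\bigl[\sum_{i=\tau_j}^{\tau_{j+1}\wedge m - 1} X_i\bigr]$; letting $m \to \infty$, the bound $X_{\tau_{j+1}-1} < 2^{j+1}$ combined with the branching identity $\E[X_{\tau_{j+1}}^2 \mid \F_{\tau_{j+1}-1}] = X_{\tau_{j+1}-1}^2 + \sigma^2 X_{\tau_{j+1}-1}$ controls the $\{\tau_{j+1} \le m\}$ piece, while almost-sure extinction of the critical chain together with dominated convergence (with dominating bound $2^{2j+2}$ valid on $\{\tau_{j+1} > m\}$) kills the residual $\E[X_m^2 \indic_{\{\tau_{j+1} > m\}}]$. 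Combining this with Markov's inequality and the Doob estimate $\P(\tau_j < \infty) \le 2^{-j}$ yields $\P(A_j) \le C \cdot 2^j (k-j)^2 / 2^{2k}$, and therefore $\sum_{j=0}^{k-1} \P(A_j) \le (C/2^k) \sum_{l \ge 1} l^2 2^{-l} \le C'/2^k$.

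The main technical subtlety, absent in the continuous-time setting, is \emph{overshoot}: $X$ can jump past $2^{j+1}$ in a single step, so $X_{\tau_{j+1}}$ need not equal $2^{j+1}$. Two observations keep the scheme going. First, the conditional estimate for $S_j$ uses only $X_{\tau_{j+1}-1} < 2^{j+1}$, which is automatic from the definition of $\tau_{j+1}$. Second, the degenerate case $X_{\tau_j} \ge 2^{j+1}$ forces $\tau_{j+1} = \tau_j$, so $S_j = 0$ and $A_j$ cannot occur; the large-jump mass is then accounted for by the first later $A_{j'}$ at whose hitting time the overshoot lands below $2^{j'+1}$. Because $\sigma^2 \ge 1/2$ uniformly in $N$, the constants obtained this way are independent of $N$, giving the required bound $\P(\tilde{T}_k < \infty) \le C \cdot 2^{-k}$.
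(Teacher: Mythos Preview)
Your argument mirrors the paper's almost exactly: both split $\tilde T_k = \tilde T'_k \wedge \tilde T''_k$, handle $\tilde T'_k$ via the martingale (Doob) bound, cover $\{\tilde T''_k < \tilde T'_k\}$ by the level-doubling events $A_j$, and control $\E[S_j]$ through the quadratic-variation identity $\E[(X_n-X_m)^2\mid\F_m]=\sigma^2\,\E\bigl[\sum_{i=m}^{n-1}X_i\,\big|\,\F_m\bigr]$. Your discussion of overshoot is in fact more explicit than the paper's, which simply writes $\P(A_j)\le (2^{j+2}-2^j)^2\pi(k-j)^2/2^{2k}$ without addressing the jump at $\tilde T'_{j+1}$.

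There is, however, one step that does not close as stated (and the paper shares this gap). Optional stopping gives $\sigma^2\,\E[S_j\mid\F_{\tau_j}]=\E\bigl[X_{\tau_{j+1}}^2\indic_{\tau_{j+1}<\infty}\,\big|\,\F_{\tau_j}\bigr]-X_{\tau_j}^2$, and you want the right side to be $O(2^{2j})$. You propose to extract this from ``$X_{\tau_{j+1}-1}<2^{j+1}$ combined with the branching identity $\E[X_{\tau_{j+1}}^2\mid\F_{\tau_{j+1}-1}]=X_{\tau_{j+1}-1}^2+\sigma^2 X_{\tau_{j+1}-1}$''. But $\tau_{j+1}-1$ is not a stopping time, and the event $\{\tau_{j+1}=n\}$ depends on $X_n$ through $\{X_n\ge 2^{j+1}\}$; conditioning on $\F_{n-1}$ and summing over $n$ just reproduces the identity $\E[X_{\tau_{j+1}}^2\indic_{\tau_{j+1}<\infty}]=X_{\tau_j}^2+\sigma^2\E[S_j]$, which is circular. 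What is actually needed is an honest overshoot estimate: since $X_{n-1}<2^{j+1}$ on $\{\tau_{j+1}>n-1\}$, the one-step increment $X_n-X_{n-1}$ is a centred sum of at most $2^{j+1}$ i.i.d.\ $\mathrm{Bin}(2N,1/(2N))$ variables, whose sub-exponential tails (uniform in $N$; cf.\ Lemma~\ref{large deviation Tk}) yield $\E\bigl[X_n^2\indic_{X_n\ge 2^{j+1}}\,\big|\,\F_{n-1}\bigr]\le C\,2^{2j}\,\P\bigl(X_n\ge 2^{j+1}\,\big|\,\F_{n-1}\bigr)$ on that event. Summing over $n$ then gives $\E[X_{\tau_{j+1}}^2\indic_{\tau_{j+1}<\infty}]\le C\,2^{2j}\,\P(\tau_{j+1}<\infty)\le C\,2^{2j}$, after which the rest of your scheme goes through verbatim.
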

\begin{proposition}
\label{prop 1/8}
Let integer $k_0$ be defined by $2^{k_0}\leq N^{2/5}<2^{k_0+1}$. There exists $M_1$ large such that for any $k=k_0+\log_2 M_1+r,r\geq 0$, we have
$$\P(\hat{T}_k<\infty\mid \hat{T}_{k_0+\log_2M_1}<\infty)\leq \frac{C}{8^r}.$$
\end{proposition}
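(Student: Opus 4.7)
The plan is to prove the bound inductively in $r$, showing a per-doubling factor of at most $1/8$ once the scale exceeds $M_1 N^{2/5}$. Writing $k_1 := k_0 + \log_2 M_1$ and applying the strong Markov property at the successive stopping times $\hat{T}_{k_1 + j}$ for $j = 0, 1, \ldots, r-1$, the statement reduces to proving the per-doubling estimate
$$\P(\hat{T}_{k+1} < \infty \mid \mathcal{F}_{\hat{T}_k}) \leq \tfrac{1}{8}$$
on the event $\{\hat{T}_k < \infty\}$, uniformly for $k \geq k_1$, provided $M_1$ is chosen large enough. Multiplying over the $r$ doublings yields the claimed bound $C/8^r$.

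For this per-doubling bound I would decompose $\{\hat{T}_{k+1}<\infty\} \subseteq \{\hat{T}'_{k+1}<\infty\}\cup\{\hat{T}''_{k+1}<\hat{T}'_{k+1}\}$ and aim for $1/16$ on each piece. Summing the semimartingale decomposition (\ref{real proc decomp}) with $\phi\equiv 1$ exhibits the total mass $\hat{X}_n=\sum_x\hat{\xi}_n(x)$ as a nonnegative supermartingale with compensator $A_n$ given by the attrition double sum appearing there. The naive supermartingale estimate alone gives only $\P(\hat{T}'_{k+1}<\infty\mid\mathcal{F}_{\hat{T}_k})\leq \hat{X}_{\hat{T}_k}/2^{k+1}\leq 1/2$, so the extra factor $1/8$ has to come from the attrition. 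The key quantitative input is the cumulative-attrition identity --- continuum form $\int_0^t\int \hat{u}_s\theta\,dx\,ds = \tfrac{1}{2}\int\theta(t,x)^2\,dx$ with $\theta(t,x)=\int_0^t\hat{u}_s(x)ds$, and its discrete analogue obtained by summation by parts inside $A_n$ --- combined with Cauchy--Schwarz $\int\theta^2\,dx\geq (\int\theta\,dx)^2/|\mathrm{supp}|=\hat{Y}_n^2/|\mathrm{supp}|$ and an a priori rescaled support bound $|\mathrm{supp}(\hat{\xi}_n)|\lesssim 2^{k/2}$ (derivable from Lemma \ref{psi}(b) applied to the dominating envelope $\xi$). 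On $\{\hat{T}_k<\infty\}$ the cumulated mass $\hat{Y}_{\hat{T}_k}:=\sum_{i\leq \hat{T}_k}\hat{X}_i$ is already of order $2^{2k}$, so the resulting cumulative compensator bound dwarfs $\hat{X}_{\hat{T}_k}$ whenever $M_1$ is large, and a Freedman-type maximal inequality for $\hat{X}_n$ with this enhanced negative drift delivers the required $1/16$. For the complementary event $\{\hat{T}''_{k+1}<\hat{T}'_{k+1}\}$, I would argue as in Lemma \ref{finite Tk}: partition as $\bigcup_{j} A_j$ with
$$A_j=\Bigl\{\hat{T}'_{k_1+j}<\infty,\ \sum_{i=\hat{T}'_{k_1+j}}^{\hat{T}'_{k_1+j+1}-1}\hat{X}_i \geq c\cdot 2^{2(k+1)}/(k+1-j)^2\Bigr\},$$
and combine the inductively improved decay of $\P(\hat{T}'_{k_1+j}<\infty)$ from the first part with the Markov-inequality step of Lemma \ref{finite Tk} to control the inner sum.

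The main obstacle is the quantitative extraction of the additional factor $4$ beyond the naive supermartingale estimate. Turning the cumulative-attrition identity into a per-step drift bound for $\hat{X}_n$ that is sharp enough to give $1/16$ requires keeping constants uniform in $n$ up to the stopping time and controlling the rescaled support of $\hat{\xi}$ uniformly throughout the relevant regime; the naive ``$\int \hat{u}\,\theta\,dx\geq \hat{X}_t\hat{Y}_t/|\mathrm{supp}|$'' is not immediate (it would need positive spatial correlation of $\hat{u}$ and $\theta$), so the time-integrated identity for $\int \theta^2\,dx$ is essential. A potentially cleaner alternative is to first carry out the computation inside the SPDE limit $\hat{u}_t$ of Theorem \ref{thm real spde}, where $dX_t = -(\int\hat{u}_t\theta\,dx)\,dt + dM_t$ has tractable structure, and then transfer back to the discrete process by the tightness already established in Section \ref{sec:real}. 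The sharpness needed to hit $1/16$ per piece --- rather than some weaker constant --- is exactly what dictates the choice of $M_1$.
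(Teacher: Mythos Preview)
Your reduction to the per-doubling estimate $\P(\hat T_{k+1}<\infty\mid\hat T_k<\infty)\le 1/8$ for $k\ge k_1$ is exactly the reduction the paper makes. From that point on, however, the paper proceeds quite differently, and your proposed route has a real gap.

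The paper does \emph{not} try to squeeze the extra factor out of the global compensator via Cauchy--Schwarz and a support bound. Instead it introduces an auxiliary result, Proposition~\ref{extraprop}: starting from a single particle, after a fixed number $M_3N^{2/5}$ of steps the expected mass of the true process is at most $\delta$. The per-doubling argument then runs in four pieces: (i) control the overshoot at $\hat T_k$ by Lemma~\ref{large deviation Tk}; (ii) use the envelope martingale to bound the maximal mass on $[\hat T_k,\hat T_k+M_3N^{2/5}]$; (iii) apply Proposition~\ref{extraprop} and Markov to make $\hat X_{\hat T_k+M_3N^{2/5}}$ a tiny multiple of $2^k$; (iv) apply Corollary~\ref{tkcor} to the envelope from that moment on. The proof of Proposition~\ref{extraprop} is a \emph{local} recurrence argument: on the complementary events one finds a small interval carrying $\epsilon^3N^{4/5}$ visited sites, and then a random walk started anywhere nearby hits this killing zone with probability bounded below before escaping, so after enough returns the particle dies. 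No global support control is needed.

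The gap in your plan is precisely the support bound $|\mathrm{supp}(\hat\xi_n)|\lesssim 2^{k/2}$. Lemma~\ref{psi}(b) gives exponential moments for a single random-walk step kernel, not a bound on the spatial extent of the branching envelope conditioned on $\{\hat T_k<\infty\}$; and on that event you have no a~priori control on $\hat T_k$ itself (it could be reached via $\hat T''_k$, making the support enormous). Without this bound your Cauchy--Schwarz step $\int\theta^2\ge(\int\theta)^2/|\mathrm{supp}|$ yields nothing usable, and the ``Freedman-type'' step is also problematic since the increments of $\hat X_n$ are not bounded. Your alternative of working in the SPDE limit and transferring back runs into the usual problem that weak convergence of $A\hat\xi$ does not by itself give the uniform-in-$N$ tail bounds you need here. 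The paper's waiting-time-plus-local-recurrence mechanism sidesteps all of this.
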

Postponing the proofs of these estimates, we first see how they allow us to conclude the proof of Theorem \ref{thm:sub}.

\begin{proof}[Proof of Theorem \ref{thm:sub}]
The proof  is given in the following steps. As we can see in the proof of Theorem \ref{real spde}, the attrition part is negligible when $\alpha<1/5$ becomes significant when $\alpha=1/5$. Because of attractiveness, we only need to consider the attrition once the total mass is of order $O(N^{2/5})$. So we consider a process that dominates the horizontal process, which follows the pure branching random walk before the total mass reaches $M_1N^{2/5}$ for some $M_1$  large and includes the attrition part after that. We are first interested in the crossing time of $\sum_x\xi_n(x)$ over level $M_1N^{2/5}$. 

The dominating process that we consider in this subsection follows $\{\xi_k(x)\}$ before $\tilde{T}_{k_0+\log_2 M_1}$ and follows $\{\hat{\xi}_k(x)\}$ after $\tilde{T}_{k_0+\log_2 M_1}$.
The reason of separating the time is as follows. The size of cluster containing the origin satisfies
\begin{align*}
\E |\mathcal{C}^0_0| &\leq \sum_{k=0}^{\infty}2^{2(k+1)}\P(\hat{T}_k<\infty)\\
&\leq \sum_{k=0}^{k_0+\log_2 M_1}2^{2(k+1)} \P(\tilde{T}_k<\infty)+\sum_{k\geq k_0+\log_2 M_1}2^{2(k+1)}\P(\hat{T}_{k+1}<\infty)\\
&\leq \sum_{k=0}^{k_0+\log_2 M_1}2^{2(k+1)}C2^{-k}+\sum_{k\geq k_0+\log_2 M_1}2^{2(k+1)}\P(\hat{T}_{k+1}<\infty)\\
&\leq 8CM_1N^{2/5}+\sum_{k\geq k_0+\log_2 M_1}2^{2(k+1)}\P(\hat{T}_{k+1}<\infty).
\end{align*}
The third inequality is by Lemma \ref{finite Tk discrete} and the fourth inequality is due to the fact that $2^{k_0}\leq N^{2/5}$. The last work is to bound the second term in the last inequality. By Proposition \ref{prop 1/8},
the size of cluster containing zero can be bounded by
\begin{align*}
\E |\mathcal{C}^0_0| &\leq 8CN^{2/5}+\sum_{k\geq k_0+\log_2 M_1}2^{2k+2}C2^{-k_0-\log_2 M_1}8^{-(k-k_0-\log_2 M_1)}\\
&\leq 8CM_1N^{2/5}+8CM_1N^{2/5}.
\end{align*}
This finishes the proof of Theorem \ref{thm:sub}.
\end{proof}
In the following part of this subsection, we will show Lemma \ref{finite Tk discrete} and Proposition \ref{prop 1/8}.

\begin{proof}[Proof of Lemma \ref{finite Tk discrete}]
The proof is similar as in Lemma \ref{finite Tk}. It is followed by replacing the corresponding part in the proof of Lemma \ref{finite Tk} that
$$\E \left( (X_t-X_s)^2\mid X_s \right)=\E \left( \int_s^t X_udu \middle| X_s \right)$$
into the fact that

\begin{align*}
\E \left( \left(\sum_x\xi_n(x)-\sum_x\xi_m(x)\right) \middle| \mathcal{F}_m \right) &=\left(1-\frac{1}{2N}\right)\E \left( \sum_{j=m}^{n-1} \sum_x\xi_j(x) \middle| \mathcal{F}_m\right).
\end{align*}

Then we can use the similar martingale technique and the fact that
\begin{equation}
\label{eqn:discrete mass}
\sum_x \xi_{n+1}(x)=\sum_x \xi_n(x)+\sum_x\sum_{y\sim x}\sum_{w=1}^{\xi_n(y)}\left( \eta_{n+1}^w(y,x)-\frac{1}{2N} \right)
\end{equation}
is a martingale. Denote the discrete mass as $$\tilde{X}_n=\sum_x \xi_n(x).$$
The desired probability can be decomposed as
\begin{align*}
\P(\tilde{T}_k<\infty) &= \P(\tilde{T}'_k<\infty,\tilde{T}'_k<\tilde{T}''_k)+\P(\tilde{T}''_k<\infty,\tilde{T}''_k<\tilde{T}'_k)\\
&\leq \P(\tilde{T}'_k<\infty)+\P(\tilde{T}''_k<T'_k),
\end{align*}
where
\begin{align*}
\tilde{T}'_k=\inf\{n:\sum_x \xi_n(x)\geq 2^k\}, \tilde{T}''_k=\inf \left\{n:\sum_{i=0}^n \sum_x \xi_i(x)\geq 2^{2k}  \right\}.
\end{align*}
Denote $\tilde{H}_0=\inf\{n:\tilde{X}_n=0\}$ as the first hitting time of zero, then $\P(\tilde{T}'_k<\infty)=\P(\tilde{T}'_k<\tilde{H}_0)$, and this is simply
$$\P(\tilde{T}'_k<\infty)\leq \frac{\tilde{X}_0}{2^k}=\frac{1}{2^k}.$$
The event $\{\tilde{T}''_k<\tilde{T}'_k\}\subset \bigcup_{j=1}^{k-1}A_j$, where
$$A_j=\left\{ \tilde{T}'_j<\infty,\sum_{i=\tilde{T}'_j+1}^{\tilde{T}'_{j+1}}\tilde{X}_i\geq \frac{6\cdot 2^{2k}}{\pi (k-j)^2} \right\}.$$
For $m<n$, we have
\begin{align*}
&~~~~\E \left[ (\tilde{X}_{n}-\tilde{X}_m)^2\mid \tilde{X}_m \right]\\
&=\E\left[ \left(\sum_{i=m}^{n-1}(\tilde{X}_{i+1}-\tilde{X}_i)\right)^2\mid \tilde{X}_m \right]\\
&=\E \left[ \sum_{i=m}^{n-1}(\tilde{X}_{i+1}-\tilde{X}_i)^2\mid \tilde{X}_m \right]\\
&=\left(1-\frac{1}{2N}\right)\E\left[\sum_{i=m}^{n-1}\tilde{X}_i\mid \tilde{X}_m\right],
\end{align*}
where the second equality follows at once from the definition of $\{\tilde{X}_k\}_{k\geq0}$ (a branching process).
Letting $n=\tilde{T}'_{j+1}$ and $m=\tilde{T}'_j$ gives that
\begin{align*}
\P(A_j)&\leq \frac{(2^{j+2}-2^j)^2\cdot \pi(k-j)^2}{2^{2k}},
\end{align*}
by the strong Markov property of $\{\tilde{X}_k)_{k\geq0}$ at stopping time $\tilde{T}'_j$.
Therefore,
\begin{align*}
\P(\tilde{T}_k<\infty)&\leq \frac{1}{2^k}+\sum_{j=1}^{k-1}\P(A_j)\\
&\leq \frac{C}{2^k}.
\end{align*}
\end{proof}

The above proof immediately yields
\begin{corollary} \label{tkcor}
Given a stopping time $T$ with respect to the natural filtration of the $\{\xi_n(x), n \ge 0, x \in \Z / N^{6/5}\}$,
the stopping time $T(k) = \inf \left\{n \geq T: \sum_x\xi_n(x) \ge 2^k \mbox{ or } \sum_{m=0}^n\sum_x\xi_m(x) \geq 2^{2k} \right\} $ satisfies
$$
\P( T(k) < \infty \vert \F _T ) \leq  C 2^{-r}
$$
on the set
\begin{align*}
\left\{\tilde{X}_T \leq 2^{k-r}, \sum_{m=0}^T \tilde{X}_m \leq 2^{2k} /2\right\}
\end{align*}
for universal finite $C$ (uniform in $N$) and integer $r$.
\end{corollary}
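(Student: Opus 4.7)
The plan is to mimic the proof of Lemma \ref{finite Tk discrete}, but with the initial configuration replaced by the conditional state at the stopping time $T$, using the strong Markov property to reduce the calculation to exactly the form of that lemma. Write $T'(k) = \inf\{n \ge T : \tilde X_n \ge 2^k\}$ and $T''(k) = \inf\{n \ge T : \sum_{m=0}^n \tilde X_m \ge 2^{2k}\}$, so that $T(k) = T'(k)\wedge T''(k)$ and
$$\P(T(k)<\infty\mid \F_T) \le \P(T'(k)<\infty\mid \F_T) + \P(T''(k)<T'(k)\mid \F_T).$$
On the set $\{\tilde X_T\le 2^{k-r}\}$, the first summand is handled directly by optional stopping applied to the nonnegative martingale $\tilde X_n$ (recall (\ref{eqn:discrete mass})): since $\tilde X_{T'(k)}\ge 2^k$ on $\{T'(k)<\infty\}$, one gets $\P(T'(k)<\infty\mid \F_T)\le \tilde X_T/2^k \le 2^{-r}$.

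For the second summand, I would decompose according to the last level $2^j$ that $\tilde X$ reaches before $T''(k)$, for $j=k-r,k-r+1,\ldots,k-1$. Setting $T^*_j = \inf\{n\ge T : \tilde X_n\ge 2^j\}$ and noting that by assumption the cumulated mass up to time $T$ is at most $2^{2k}/2$, the event $\{T''(k)<T'(k)\}$ is contained in $\bigcup_{j=k-r}^{k-1} A_j$, where
$$A_j = \left\{T^*_j<\infty,\ \sum_{i=T^*_j+1}^{T^*_{j+1}}\tilde X_i \ge \frac{6\cdot 2^{2k}/2}{\pi(k-j)^2}\right\},$$
the weights being chosen so that $\sum_j \pi^{-1}6(k-j)^{-2}\le 1/2$. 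Strong Markov applied to the martingale $\tilde X$ at $T^*_j$ and optional stopping give $\P(T^*_j<\infty\mid\F_T)\le 2^{k-r-j}$, while the identity
$$\E\bigl[(\tilde X_{T^*_{j+1}}-\tilde X_{T^*_j})^2\,\big|\,\F_{T^*_j}\bigr] = \Bigl(1-\tfrac1{2N}\Bigr)\E\Bigl[\sum_{i=T^*_j}^{T^*_{j+1}-1}\tilde X_i\,\Big|\,\F_{T^*_j}\Bigr]$$
together with the deterministic bound $\tilde X_{T^*_{j+1}}\le 2\cdot 2^{j+1}$ (the jumps of $\tilde X$ are at most $O(2^j)$, since at each step each particle produces $\text{Bin}(2N,1/(2N))$ offspring which concentrate around one) bounds the conditional expectation by $C\cdot 2^{2j}$. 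A direct application of Markov's inequality then yields $\P(A_j\mid \F_T)\le C\,2^{j-k-r}(k-j)^2$. Summing over $j=k-r,\ldots,k-1$ with the geometric factor gives
$$\P(T''(k)<T'(k)\mid \F_T) \le C\,2^{-r}\sum_{\ell\ge 1}2^{-\ell}\ell^2 \le C'2^{-r},$$
finishing the proof.

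The only delicate point is that one must control not only the probability of hitting level $2^j$ but also the first-moment bound on the mass accumulated between consecutive level crossings; this is where the quadratic variation identity for $\tilde X$ enters. Once the book-keeping of the stopping times is set up (in particular, using the strong Markov property at $T$ to identify $(\tilde X_{T+n})_{n\ge 0}$ conditional on $\F_T$ with the same branching envelope dynamics starting from $\tilde X_T$), the rest is a verbatim repetition of the argument in Lemma \ref{finite Tk discrete}, with the factor $2^{-k}$ there replaced by $2^{-r}$, i.e.\ by the gap between the starting mass $2^{k-r}$ and the target $2^k$.
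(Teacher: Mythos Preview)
Your approach is exactly what the paper intends: the corollary is stated immediately after Lemma~\ref{finite Tk discrete} with the remark that ``the above proof immediately yields'' it, and your use of the strong Markov property at $T$ together with the level-crossing decomposition is precisely that argument.

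Two small points of care are worth flagging. First, your containment $\{T''(k)<T'(k)\}\subset\bigcup_{j=k-r}^{k-1}A_j$ omits the mass accumulated between time $T$ and the first crossing of level $2^{k-r}$; this is harmless, since the same quadratic-variation identity bounds the expected accumulation in that initial stretch by $C\cdot 2^{2(k-r)}$, contributing a further $C\cdot 2^{-2r}$ after Markov's inequality. Second, the bound $\tilde X_{T^*_{j+1}}\le 2\cdot 2^{j+1}$ is not literally deterministic (the overshoot is a binomial sum and can be large with small probability); what is actually needed, and what the paper's proof of Lemma~\ref{finite Tk discrete} implicitly uses, is the second-moment bound $\E[\tilde X_{T^*_{j+1}}^2\mid\F_{T^*_{j+1}-1}]\le C\cdot 2^{2j}$, which follows from $\tilde X_{T^*_{j+1}-1}<2^{j+1}$ and the variance of a binomial. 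With these adjustments the argument is complete.
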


To show Proposition \ref{prop 1/8}, we need two properties of the branching processes: on the large deviations and the next one is on the population size of the critical branching process. 

\begin{lemma}
\label{large deviation Tk}
For a sequence of random variables $Y_i\overset{\text{i.i.d.}}{\sim}\text{Binomial}(2N,1/(2N)),i=1,\dots,n$, we have for $a>\frac{1}{2}$,
$$\P\left( \sum_{i=1}^n(Y_i-1)\geq n^{a} \right)\leq e^{-cn^{a\wedge(2a-1)}}.$$
\end{lemma}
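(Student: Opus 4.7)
The plan is a standard Chernoff exponential Markov argument, with the main subtlety being that although $Y_i$ can be as large as $2N$, its log-moment generating function admits a bound that is uniform in $N$. Setting $S_n = \sum_{i=1}^n (Y_i - 1)$ and $\phi_N(\lambda) = \log \E[e^{\lambda(Y_1 - 1)}]$, a direct computation gives
$$\phi_N(\lambda) = -\lambda + 2N \log\!\left(1 + \frac{e^\lambda - 1}{2N}\right) \leq e^\lambda - 1 - \lambda,$$
where the inequality $\log(1+x) \leq x$ has absorbed all the $N$-dependence. Taylor expansion then gives $\phi_N(\lambda) \leq C\lambda^2$ for $\lambda \in [0,1]$, with $C$ an absolute constant.

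Next I would split on the value of $a$ to match the two regimes that the exponent $a \wedge (2a-1)$ encodes. In the \emph{Gaussian/moderate deviation regime} $1/2 < a \leq 1$, I choose $\lambda = \lambda_n := c\, n^{a-1}$ for a small constant $c$; since $a \leq 1$, this $\lambda_n$ stays bounded so the quadratic bound $\phi_N(\lambda_n) \leq C\lambda_n^2$ applies. Chernoff then yields
$$\P(S_n \geq n^a) \leq \exp\bigl(-\lambda_n n^a + n \phi_N(\lambda_n)\bigr) \leq \exp\bigl(-(c - C c^2)\, n^{2a-1}\bigr),$$
and taking $c$ small enough makes the coefficient positive, giving the stated bound with exponent $n^{2a-1}$. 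In the \emph{Cramer/large deviation regime} $a \geq 1$, I instead fix $\lambda = 1$ and use the uniform bound $\phi_N(1) \leq e - 2$, which gives
$$\P(S_n \geq n^a) \leq \exp\bigl(- n^a + (e-2)n\bigr).$$
Since $a > 1$ forces $n^a \gg n$, the linear term is negligible for $n$ large and the bound collapses to $e^{-cn^a}$; the borderline $a = 1$ is already handled by the first regime (both exponents coincide there).

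The principal obstacle to worry about is exactly the one I flagged at the outset: Bernstein-type inequalities that rely on an $L^\infty$ bound would introduce a factor of $2N$ and produce a useless estimate, so the argument crucially exploits the fact that the $N$-dependence of $\phi_N$ disappears in the inequality $\phi_N(\lambda) \leq e^\lambda - 1 - \lambda$. Beyond this, the calculation is routine optimization of the Chernoff exponent, with the scale $\lambda \sim n^{a-1}$ chosen precisely so that the linear and quadratic contributions in the exponent balance when $a < 1$, and a fixed $\lambda$ of order one handling the Cramer regime.
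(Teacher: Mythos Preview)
Your proof is correct and follows essentially the same Chernoff route as the paper, with the same case split on $a$. The only difference is in execution: the paper optimizes the tilt parameter exactly (solving for the critical point $t=\log(1+n^{a-1})-\log\bigl(1-\tfrac{n^{a-1}}{2N-1}\bigr)$, which still carries $N$) and then argues by approximation, whereas you first remove the $N$-dependence via $\log(1+x)\le x$ to obtain the uniform bound $\phi_N(\lambda)\le e^\lambda-1-\lambda$ and then pick a near-optimal $\lambda$. Your version makes the uniformity in $N$ explicit and is a bit cleaner; the paper's exact optimization would in principle yield the sharpest constant but leaves the $N$-uniformity implicit.
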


\begin{proof}
The proof is followed by large deviation technique.
\begin{align*}
\P\left( \sum_{i=1}^n(Y_i-1)\geq n^{a} \right) &=\P\left( e^{t\sum_{i=1}^n Y_i}\geq e^{(n+n^{a})t} \right)\\
&\leq \exp\left( -(n+n^{a})t+2Nn\log\left( 1+\frac{e^t-1}{2N} \right) \right),\forall t\in\R,
\end{align*}
by the Markov inequality.
The r.h.s. reaches the minimum if $t$ satisfies $$\frac{ne^t}{1+\frac{e^t-1}{2N}}=n+n^{a}.$$ From this, $t=\log(1+n^{a-1})-\log(1-\frac{n^{a-1}}{2N-1})$. If $\frac{1}{2}<a<1$, $t\approx n^{a-1}$ and hence
$$-(n+n^{a})t+2Nn\log\left( 1+\frac{e^t-1}{2N} \right)\leq -cn^{2a-1}.$$
But for $a\geq 1$, $e^t\approx n^{a-1}$ and
$$-(n+n^{a})t+2Nn\log\left( 1+\frac{e^t-1}{2N} \right)\leq -cn^{a}.$$

\end{proof}

\begin{lemma}
\label{binomial critical}
Denote the critical binomial branching process as $\{Z^N_n\}_{n\geq 0}$ with $Z^N_0=1$ and $$Z^N_{n+1}=\sum_{i=1}^{Z^N_n}Y^{n+1}_i,$$
where $(Y^{n+1}_i)_{n,i}$ is an i.i.d. sequence with distribution $\text{Binomial}(2N,1/(2N))$. 
\begin{enumerate}[label=(\roman*)]
\item Given any $T>1$, 
$$\P\left(\frac{2Z^N_{\lfloor tN^{2/5}\rfloor}}{tN^{2/5}}>x\mid Z^N_{\lfloor tN^{2/5}\rfloor}>0\right)\rightarrow e^{-x}$$
as $N\rightarrow\infty$ uniformly in $t\in[1/T,T]$.
\item $(tN^{2/5})\cdot\P(Z^N_{\lfloor tN^{2/5}\rfloor}>0)\rightarrow 2$ as $N\rightarrow\infty$.
\end{enumerate}
\end{lemma}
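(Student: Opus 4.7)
Both parts are Kolmogorov--Yaglom asymptotics for the critical Galton--Watson process $\{Z^N_n\}$, whose offspring law $\mathrm{Bin}(2N,1/(2N))$ has mean $1$, variance $\sigma_N^2=1-1/(2N)$, and all higher factorial moments bounded uniformly in $N$. The plan is to run the classical iterated generating function argument, carefully keeping all error terms uniform in $N$ and (for (i)) in $t\in[1/T,T]$.

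The starting point is the offspring p.g.f.\ $\phi_N(s)=(1+(s-1)/(2N))^{2N}$, so that $\E[s^{Z^N_n}]=\phi_N^{(n)}(s)$, together with the $N$-uniform Taylor expansion
\[
\phi_N(1-u)=1-u+\tfrac12\sigma_N^2 u^2+O(u^3),\qquad u\in[0,1].
\]
Setting $u_k:=1-\phi_N^{(k)}(s)$ yields the recursion $u_{k+1}=u_k-\tfrac12\sigma_N^2u_k^2+O(u_k^3)$. The reciprocal $v_k:=1/u_k$ linearises this to $v_{k+1}=v_k+\tfrac12\sigma_N^2+O(u_k)$, and an induction giving $u_k\le C/k$ then yields
\[
v_n=v_0+\tfrac{n\sigma_N^2}{2}+O(\log n),\qquad\text{i.e.,}\qquad u_n=\frac{2}{n\sigma_N^2+2v_0+O(\log n)},
\]
all $O$-constants independent of $N$. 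For $n=\lfloor tN^{2/5}\rfloor$ one has $n\sigma_N^2=n+O(n/N)=n+o(1)$, so $u_n=2/(n+2v_0+O(\log n))$ uniformly in $t\in[1/T,T]$.

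Statement (ii) follows by taking $s=0$, so that $v_0=1$ and $u_n=\P(Z^N_n>0)$, giving $n\P(Z^N_n>0)=2+O(\log n/n)\to 2$. For (i) I would take $s=e^{-\lambda/n}$ with $\lambda>0$, so $u_0=\lambda/n+O(1/n^2)$ and $v_0=n/\lambda+O(1/\lambda)$; substitution gives $u_n(\lambda)=2\lambda/(n(\lambda+2))+o(1/n)$, and combining with (ii),
\[
\E\!\left[e^{-\lambda Z_n^N/n}\,\big|\,Z_n^N>0\right]=1-\frac{u_n(\lambda)}{\P(Z_n^N>0)}\longrightarrow\frac{2}{\lambda+2},
\]
uniformly in $t\in[1/T,T]$. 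Since $2/(\lambda+2)$ is the Laplace transform of $\mathrm{Exp}(2)$, L\'evy's continuity theorem gives $Z^N_n/n\mid Z^N_n>0\Rightarrow\mathrm{Exp}(2)$; a standard compactness argument over $t\in[1/T,T]$ (using continuity of the $\mathrm{Exp}(2)$ distribution function together with uniform convergence of Laplace transforms) upgrades this to uniform convergence of CDFs in $t$, and the rescaling $x\leftrightarrow x/2$ yields (i).

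The main obstacle is purely technical: ensuring that all $O$-constants in the Taylor expansion and in the inductive bound $u_k\le C/k$ are uniform in $N$ throughout the range $1\le k\le TN^{2/5}$, so that the single recursion above applies simultaneously to both initial conditions $u_0=1$ (for (ii)) and $u_0=\lambda/n$ (for (i)). Both facts follow from the bounded-moments property of $\mathrm{Bin}(2N,1/(2N))$ and a routine induction, but must be carried out with care because the offspring law itself depends on $N$.
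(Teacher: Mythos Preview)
Your argument is correct and is precisely the classical Kolmogorov--Yaglom computation; the paper's own proof simply invokes Theorem~I.10.1 of Harris after checking that $(f^N)''(1)\to 1$ and $\sup_N (f^N)'''(1)<\infty$, which are exactly the ingredients that make your $O(u^3)$ term and the bound $u_k\le C/k$ uniform in $N$. In other words, you have written out explicitly the content of the reference the paper cites, so the two approaches coincide in substance; the only difference is presentation (self-contained versus citation), and your version has the advantage of making transparent where the uniformity in $N$ and in $t\in[1/T,T]$ enters.
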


\begin{proof}
The moment generating function of $Z^N_n$, $f^N_n(t)=\E[t^{Z^N_n}]$ is given by
$$f^N_{n+1}(t)=f^N(f^N_n(t)), f^N(t)=f^N_1(t)=\left(1+\frac{t-1}{2N}\right)^{2N}$$
with $\sup_N(f^N)'''(1)$ bounded. The assertion follows from Theorem I.10.1 of Harris \cite{harris} and the proof therein,
after observing that
$$(f^N)''(1)=\frac{2N(2N-1)}{(2N)^2}\rightarrow 1\text{ as }N\rightarrow\infty$$
hence the result is uniform in $t$.
\end{proof}

With the help of these two properties, we can prove Proposition \ref{prop 1/8} used in the proof of Theorem \ref{thm:sub}.

We first show how Proposition \ref{prop 1/8} follows from the following.

\begin{proposition} \label{extraprop}
Suppose $\hat\xi_0(x)=1$ if $x=0$ and $\hat\xi_0(x)=0$ otherwise, and $\delta > 0$, then there exists $M_3$ sufficiently large that 
$$
\E\left( \hat{X}_{M_3 N^{2/5}} \right) < \delta,
$$
for all $N$, where $\hat{X}_n=\sum_x\hat{\xi}_n(x)$ is the discrete mass of the true horizontal process.
\end{proposition}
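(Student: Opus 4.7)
The naive bound $\E\hat X_n\le\E\tilde X_n=1$ coming from the envelope's martingale property yields nothing, and Theorem \ref{thm real spde} does not apply directly: with a single particle at the origin the rescaled initial density $A\hat\xi_0$ vanishes as $N\to\infty$. The plan is a two-stage decomposition via a restart at time $\tau=T_1 N^{2/5}$ for a moderate fixed $T_1>0$, after which, on the rare survival event, the system has a macroscopic rescaled profile and Theorem \ref{thm real spde} becomes available. Couple $\hat\xi$ with the envelope $\tilde\xi$ of Section \ref{sec:envelope} so that $\hat X_n\le\tilde X_n$ pointwise and $\hat X_n=0$ whenever $\tilde X_{n'}=0$ for some $n'\le n$; since extinction of $\tilde\xi$ forces extinction of $\hat\xi$,
\[
\E\hat X_{M_3 N^{2/5}}=\E\bigl[\hat X_{M_3 N^{2/5}}\indic_{\tilde X_\tau>0}\bigr].
\]
By Lemma \ref{binomial critical}, $\P(\tilde X_\tau>0)\sim 2/(T_1 N^{2/5})$ and, conditional on survival, $\tilde X_\tau/(T_1 N^{2/5}/2)$ is asymptotically $\mathrm{Exp}(1)$; so essentially all of $\E\tilde X_n=1$ is concentrated on $\{\tilde X_\tau>0\}$.

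Next, on $\{\tilde X_\tau>0\}$ a Dawson--Watanabe-from-a-point scaling argument (following from the moment bounds and tightness of Section \ref{sec:envelope} applied conditionally on survival, jointly with Lemma \ref{binomial critical}) gives that $\hat\nu^N_\tau$ converges, jointly with $\tilde X_\tau/N^{2/5}$, to a random measure on $\R$ of compact support with total rescaled mass of order $T_1$. The true restart profile $A\hat\xi_\tau$ is dominated by the envelope's and differs from it only by the rescaled attrition accumulated up to time $\tau$ (a bounded fraction, of order $T_1^{5/2}$, by Remark (a) after Theorem \ref{thm:main}), hence also has a non-degenerate limit $\mu^*$. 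Applying the strong Markov property at $\tau$ together with a random-initial-condition extension of Theorem \ref{thm real spde} (the pre-$\tau$ attrition only adds extra killing terms and so preserves the upper bound), $A\hat\xi_{\tau+s N^{2/5}}$ converges in law to $v_s$ solving (\ref{real spde}) with $v_0=\mu^*$, whence $\hat X_{M_3 N^{2/5}}\approx N^{2/5}\int v_{M_3-T_1}(x)\,dx$. The decisive analytic input is the SPDE mass decay: the identity
\[
\E\!\int v_s\,dx=\E\!\int v_0\,dx-\int_0^s\!\E\!\int v_r(x)\Bigl(\int_0^r v_q(x)\,dq\Bigr)\,dx\,dr
\]
shows $\E\int v_s\,dx$ is non-increasing in $s$; in the purely reactive ODE analog $\partial_s u=-uK$ with $K_s=\int_0^s u_r\,dr$, the pointwise solution $u(x,s)=u_0(x)\,\mathrm{sech}^2(\sqrt{u_0(x)/2}\,s)$ decays exponentially, and together with boundedness of support plus the facts that the Laplacian only spreads mass and the noise has mean zero, this yields $\E\int v_s\,dx\to 0$ at a rate $\eta(s)$ independent of $N$.

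Combining the three steps,
\[
\E\hat X_{M_3 N^{2/5}}\lesssim N^{2/5}\cdot\P(\tilde X_\tau>0)\cdot\E\!\int v_{M_3-T_1}\,dx\lesssim\frac{\eta(M_3-T_1)}{T_1},
\]
which is $<\delta$ by taking $T_1$ fixed moderate and $M_3$ large; this handles all sufficiently large $N$, and the finitely many remaining small-$N$ cases are taken care of by the eventual extinction of the finite-state chain $\hat\xi$, enlarging $M_3$ if needed. The two main obstacles are (i) the random-initial-condition version of Theorem \ref{thm real spde}, which requires extending the tightness and duality arguments of Section \ref{sec:real} to accommodate the random, possibly singular, restart profile $\mu^*$, and (ii) the uniform mass decay for (\ref{real spde}), where the deterministic $\mathrm{sech}^2$ computation is only heuristic and one must rigorously control the interplay of the killing, diffusion, and noise terms in the stochastic setting.
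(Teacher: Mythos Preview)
Your plan is quite different from the paper's and, as you yourself flag in the final paragraph, incomplete at two structural points. The paper avoids the SPDE limit entirely and works directly on the discrete process. It splits the survival event $\{\tilde X_{N^{2/5}}>0\}$ according to (i) whether the true mass $\hat X_n$ drops below $\epsilon N^{2/5}$ somewhere on $[N^{2/5}/2,\,N^{2/5}]$, and (ii) whether the envelope's spatial range $R_{N^{2/5}}$ exceeds $1/\epsilon$. The first piece is controlled by the strong Markov property and the envelope martingale bound (contribution $\lesssim\epsilon$); the second by Kesten's tail bound on the range of critical BRW together with uniform integrability of the Yaglom limit. On the remaining event the process carries mass $\ge\epsilon N^{2/5}$ throughout $[N^{2/5}/2,N^{2/5}]$ while being confined to $[-1/\epsilon,1/\epsilon]$, so by pigeonhole some interval of length $2\epsilon$ accumulates at least $\epsilon^3 N^{4/5}$ visited (hence forbidden) sites. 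The crux is then an elementary random-walk hitting estimate: a particle inside this interval has probability bounded below, uniformly in $N$ and depending only on $\epsilon$, of landing on a forbidden site before exiting a $4\epsilon$-neighbourhood; for $M_3$ large each surviving lineage makes many such excursions, driving the expected surviving mass below $\delta$.

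Your obstacles (i) and (ii) are not mere technicalities. For (i), asserting that the restart profile $A\hat\xi_\tau$ conditioned on survival converges in $\mathcal C$ to a continuous compactly supported density is essentially a statement about the canonical measure of super-Brownian motion under attrition; this is not contained in Theorem \ref{thm real spde}, whose tightness and identification arguments rely on deterministic initial data with $A\hat\xi_0\to f$ in $\mathcal C$. For (ii), the $\mathrm{sech}^2$ computation is correct for the noiseless, diffusionless ODE but says nothing about the SPDE: the noise is mean-zero in the total-mass equation yet enters nonlinearly in the expected killing integrand $\E\int\hat u_s\hat u_r\,dx$, and the Laplacian works against the confinement that makes the pointwise ODE decay; you would need the compact-support property of solutions to (\ref{real spde}) together with a quantitative extinction estimate, neither of which the paper provides. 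The paper's killing-zone argument sidesteps both issues by staying discrete and using only elementary random-walk bounds; what your approach would buy, if completed, is a cleaner conceptual picture tying the decay directly to the limiting equation, but at the cost of considerably more machinery than the proposition itself warrants.
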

\begin{remark}
To summarize the notations about the total mass, $\{X_t\}_{t\geq0}$ denotes the total mass of the envelope process in continuous time given the initial condition $f$ to be continuous, $f(x)=1$ for $x\in[-1,1]$ and compact supported. $\{\tilde{X}_n\}_{n\geq 0}$ and $\{\hat{X}_n\}_{n\geq0}$ represent the total mass of the envelope process and the true horizontal process in discrete time given the initial condition to be $\indic_{\{0\}}$.  
\end{remark}
This Proposition will be proven later.
\begin{proof}[Proof of Proposition \ref{prop 1/8}]
We note that it is sufficient to show that (with $M_1$ chosen sufficiently large)
$$
P(\hat {T}_{k+1} < \infty \vert \hat {T}_{k}  < \infty )< \frac{1}{8}
$$
for $k \geq k_0$. Lemma \ref{large deviation Tk} shows that outside probability 
$e ^{-2^{k/3}}$, we have that
$\hat X_{ \hat {T}_{k}} < 2^k + 2^{2k/3}$ and $\sum_{m=0}^{ \hat {T}_{k}} \hat X _m < 2^{2k}+2^{2k/3}$.

Let $B_1$ be the event that one of these two bounds fails (so $\P(B_1) \leq e ^{-2^{k/3}} < 1/32$ supposing that $M_1$ is sufficiently large).  
We fix $ \delta > 0 $ (to be bounded when needed) and let $M_3$ correspond to $\delta $ in Proposition \ref{extraprop}.
Let $B_2$ be the event that $\sup _{\hat {T}_{k} \leq i \leq \hat {T}_{k} + M_3N^{2/5} } \hat X_i \geq M_3 2^k$.
So by the martingale properties of the envelope process $\P(B_2 \backslash B_1) < \frac{2}{M_3} < 1/32 $ supposing, as we may have that
$M_3$ is sufficiently large.
We note that  on the complement of $B_1 \cup B_2, \sum_{i=0}^{ \hat {T}_{k} +M_3N^{2/5} } \hat{X}_i \leq  2^{2k}+2^{2k/3}
+M_3N^{2/5}  M_3 2^k < 2^{2(k+1)} /2 $ if $M_1 $ is chosen so that $M_1 > 8M_3^2 $ and $N$ is sufficiently large.
Next we have by Proposition \ref{extraprop} and obvious monotonicity
$$
\E( \hat X_{ \hat {T}_{k}+M_3N^{2/5}}\indic_{(B_1\cup B_2)^c}) < (2^k + 2^{2k/3})\delta
$$
and so by the Markov's inequality, the event $B_3 =\{ \hat X_{ \hat {T}_{k}+M_3N^{2/5}}\indic_{(B_1\cup B_2)^c} \geq 2 \sqrt \delta 2^k \}$
has probability bounded by $\sqrt \delta < 1 / 32 $ if $\delta$ was fixed sufficiently small.  
Finally we can apply Corollary \ref{tkcor} to see that $B_4=\{\hat {T}_{k+1} 	< \infty \} \backslash (B_1 \cup B_2 \cup B_3)$ has probability $\P(B_4) < 1/32$ (again supposing $\delta $ to have been fixed sufficiently small).
\end{proof}

In the proof above, we have that at time $\hat{T}_k$, there are around $2^k$ particles. For the process starting from each single one, we want to show that after $M_3N^{2/5}$ steps, some killing property can help to reduce the quantity to be $\delta$ small. It remains to prove Proposition \ref{extraprop}. 

\begin{proof}[Proof of Proposition \ref{extraprop}]
We suppose that $\hat{X} $ is coupled with a envelope process $Z$ so that $\hat{X}_{n}$ is dominated by 
$Z_n$ for each $n$.
We suppose that $\delta>0$ is fixed.  We wish to partition $Z_{N^{2/5}} \ne \ 0$ into sets 
$B_1, B_2,$ and $B_3$ to show (with $M_3$ fixed large that for each $k=1,2,3$, $\E(\hat X_{M_3 N^{2/5}}\indic_{B_k}) < \delta / 4)$.
 Let 
$$\sigma=\inf\left\{n\geq  N^{2/5}/2:\hat{X}_n\leq\epsilon N^{2/5}\right\}$$
where $ \epsilon $ is a small positive constant which remains to be fully specified.  Let
$B_1$ = $\{ \sigma \leq N^{2/5} \}$.   Then by the Strong Markov property applied at $\sigma $
and the martingale property for the envelope of the process
\begin{align*}
\E(\hat X_{M_3 N^{2/5}}\indic_{B_1}) &\leq \P(\hat X_{N^{2/5}}\indic_{B_1} \ne \ 0) \E \left[ \E(\hat X_{M_3 N^{2/5}}\indic_{B_1} \vert \cal{F}_{\sigma}) \right]\\
&\leq  \P(\hat X_{N^{2/5}}\indic_{B_1} \ne \ 0)\epsilon N^{2/5} \\
&\leq \delta / 4,
\end{align*}
if $\epsilon $ was chosen sufficiently small
by Lemma \ref{binomial critical}.

We next consider $B_2 = \{ R_{N^ {2/5}} < 1 / \epsilon \}$ where $R_{N^ {2/5}}$ is the maximal absolute displacement from $0$ of the critical branching random walk $Z$ by time $N^ {2/5}$, i.e.
\begin{equation}
\label{eqn:max envelope}
R_n=\max_{m\leq n}\left\{x\in\Z/N^{6/5}:\xi_m(x)\neq 0\right\}.
\end{equation}

Theorem 1.1 of Kesten \cite{kesten} showed that 
\begin{equation}
\label{eqn: maximal displacement}
\P(R_{N^{2/5}}\geq z\mid Z_{N^{2/5}/2}>0)\leq C_a z^{-a},\text{ for any }a>0,
\end{equation}
(It is easy to check the bound holds for uniformly over $N$). 
Thus  since $Z_{N^{2/5}} / N^{2/5} $ conditioned on being nonzero is uniformly integrable (again using Lemma \ref{binomial critical} ), we have that (again supposing that $\epsilon $ is fixed small)
$$
\E(\hat X_{M_3 N^{2/5}}\indic_{B_2}) \leq \E( Z_{M_3 N^{2/5}}\indic_{B_2}) \ < \ \delta / 4.
$$
Finally we treat the complement $B_3$.
On the complement of $B_1 \cup B_2$ we can find a interval with length $2\epsilon $ contained in $(- 1 / \epsilon, 1 / \epsilon)$, which we denote as $(x-\epsilon,x+\epsilon)$ such that 
\begin{equation}
\label{epsilon^3}
\sum_{y\in (x-\epsilon,x+\epsilon)}\sum_{j=N^{2/5}/2}^{N^{2/5}}\hat{\xi}_j(y)\geq \epsilon^3N^{4/5}.
\end{equation}
$(B_1\cup B_2)^c$ make sure that we have sufficient number of visited sites in $(x-\epsilon,x+\epsilon)$.
Denote 
$$V=\left\{y\in(x-\epsilon,x+\epsilon):\exists~ N^{2/5}/2\leq j\leq N^{2/5},\hat{\xi}_j(y)=1\right\}$$ 
as the set of visited sites between $N^{2/5}/2$ and $N^{2/5}$.
Without loss of generality, we assume that $x=0$. For $y\in (-\epsilon,\epsilon)$, consider a random walk $\{S_i\}_{i\geq0}$ starting from $S_0=y$ and each step it moves to one of its neighbourhood $z$ ($|y-z|\leq N^{-1/5}$) with probability $1/(2N)$. Observe that $\E|S_i-S_{i-1}|^2\approx\frac{1}{3N^{2/5}}$. Let $\tau_{2\epsilon}=\inf\{i>0:S_i\in[-2\epsilon,2\epsilon]^c\}$. 

For any $z\in(-\epsilon,\epsilon)$, there are positive constants $c_1<c_2$ such that
\begin{equation}
\label{eqn:rw_hit}
c_1\epsilon N^{-4/5}\leq \P^y(S_i\text{ hits }z\text{ before }\tau_{2\epsilon})\leq c_2\epsilon N^{-4/5}.
\end{equation} 
Let
$$N_V=\sum_{i=1}^{\tau_{2\epsilon}}\mathbb{I}_{S_i\in V}.$$
Then by (\ref{epsilon^3}) and (\ref{eqn:rw_hit}),
\begin{align*}
\E[N_V] &= \sum_{z\in V}\P^y(S_i\text{ hits }z\text{ before }\tau_{2\epsilon})\\
&\geq\epsilon^3N^{4/5}\cdot c_1\epsilon N^{-4/5}\\
&=c_1\epsilon^4.
\end{align*}
Moreover,
\begin{align*}
\E[N_V^2] &=\sum_{z,z'\in V}\P^y(S_i\text{ hits }z,z'\text{ before }\tau_{2\epsilon})\\
&\leq \E[N_V]+\frac{c_2^2}{c_1^2}(\E[N_V])^2.
\end{align*}
By simple exercise using Cauchy-Schwarz inequality,
\begin{equation}
\label{eqn:positive_visits}
\begin{split}
\P(N_V>0)&\geq \frac{(\E[N_V])^2}{\E[N_V^2]}\\
&\geq\frac{1}{c_2^2c_1^{-2}+(\E[N_V])^{-1}}\\
&\geq\frac{1}{c_2^2c_1^{-2}+(c_1\epsilon^4)^{-1}}.
\end{split}
\end{equation}
For any $y\in(-\epsilon,\epsilon)$, let $\{S_i\}_{i\geq0}$ be the random walk starting from $y$ with the same law above. Define $$\sigma_0=\inf\{i>0:S_i\in [-\epsilon/2,\epsilon/2]\},\sigma'_0=\inf\{i>\sigma_0:S_i\in[-2\epsilon,2\epsilon]^c\},$$
and inductively 
$$\sigma_n=\inf\{i>\sigma'_{n-1}:S_i\in[-\epsilon/2,\epsilon/2]\},\sigma'_n=\inf\{i>\sigma_n:S_i\in[-2\epsilon,2\epsilon]^c\}.$$
We say that $S_i$ visits $n$ times to interval $[-\epsilon/2,\epsilon/2]$ in $M_3N^{2/5}$ steps if $\sigma_n<M_3N^{2/5}<\sigma_{n+1}$. Denote $N_{\epsilon/2}$ to be this number of times of visiting to $[-\epsilon/2,\epsilon/2]$ before $M_3N^{2/5}$. 

Once a particle starting from $(-\epsilon,\epsilon)$ visits $V$, it is killed with probability $\P(N_V>0)$. Hence, each time a particle visit the interval $(-\epsilon/2,\epsilon/2)$, it can survive with probability $1-\P(N_V>0)$. If the particle visits $R$ times to $(-\epsilon/2,\epsilon/2)$, the probability of surviving will be small. If we have a big time horizon $M_3N^{2/5}$, we can make sure that the particle can visit $(-\epsilon/2,\epsilon/2)$ more than $R$ times.

The probability that $\{S_i\}_{i\geq0}$ starting from $y$ does not hit $V$ until $M_3N^{2/5}$ 
\begin{align*}
&~~~~\P^y\left(S_i\notin V\text{ for any }0\leq i\leq M_3-N^{2/5}\right)\\
&\leq \P^y\left(N_{\epsilon/2}\leq R\right)+\P\left(S_i\notin V\text{ for any }0\leq i\leq M_3N^{2/5}\mid N_{\epsilon/2}> R\right).
\end{align*}
By (\ref{eqn:positive_visits}) and the Markov property of $\{S_i\}_{i\geq0}$,
\begin{align*}
\P^y\left(S_i\notin V\text{ for any }0\leq i\leq M_3N^{2/5}\mid N_v\geq R\right)&\leq \left(1-\frac{1}{c_2^{2}c_1^{-2}+(c_1\epsilon^4)^{-1}}\right)^R\\
&\leq \delta/2,
\end{align*}
if $R$ is chosen large enough compared to $\epsilon^{-4}$. 
Moreover,
$$\P^y(N_{\epsilon/2}\leq R)\leq\delta/2,$$
if $M_3$ is chosen largen compared to $R$.
This concludes that
$$\E[\hat{X}_{M_3N^{2/5}}]\leq \delta.$$
\end{proof}

%

\begin{remark}
Notice that without considering the attrition, we can have the probability $\P(T_k<\infty)\leq C2^{-k}$. This is not enough in the proof of Theorem \ref{thm:sub}. However, for the proof we are helped by the attrition: sites that were visited cannot be visited again.
Even in a very small killing zone $(x-\epsilon,x+\epsilon)$ in the proof above, many particles will be killed in a finite but large time period.
\end{remark}

\subsection{The case $\kappa>C_2$}
\label{sec:sup}
In this case, we will prove some properties of the true process, and then lead to an oriented percolation construction.
The first step is to show that the difference between the solution to (\ref{real spde}) and the solution to deterministic heat equation is quite small for small times. Suppose under $\Q$, $u(t,x)$ is the solution to (\ref{real spde}) and under $\P$, $u(t,x)$ is the solution to (\ref{eqn: brw spde}). The Radon-Nykodym derivative of $\Q$ with respect to $\P$ is (\ref{r-n derivative}). Let the initial condition $f$ be continuous, compact supported and $f(x)=1$ for $x\in[-1,1]$. We can regard the initial condition as $\mathbb{I}_{[-1,1]}$ plus some nonsignificant term. Define the difference $$N(t,x)=u(t,x)-G_t f(x),$$ with $G_tf(x)=\E[f(x+B_{t/3})]$, where $(B_t)_{t\geq0}$ is a standard Brownian motion. By Lemma 4.2 of \cite{shiga} (also ref. Lemma 4 of \cite{lalley}),
\begin{align*}
\P\left( | N(t,x) |\geq \sqrt{\delta}\e^{-(\delta^5-t)|x|} \text{ for some }t\leq \delta^5 \text{ and }x\in \R \right)\leq C_1\delta^{-1/12}\exp(-C_2 \delta^{-1/4}).
\end{align*}
This property also holds for $\Q$:
\begin{lemma}
\label{diff real heat}
Denote $$A_\delta=\left\{ | N(t,x) |\leq \sqrt{\delta}\e^{-(\delta^5-t)|x|} \text{ for } \forall t\leq \delta^5 \text{ and }\forall x\in \R \right\}.$$ If under $\Q$, $u(t,x)$ is the solution to (\ref{real spde}) given the initial condition $f$ satisfying $f(x)=1$ for $x\in[-1,1]$, $f(x)=0$ for $x\in[-1-\delta,1+\delta]$ and $f$ is linear in the other parts, then $$\Q(A_{\delta})\geq 1-3\delta^{7/2} \text{ for all }\delta>0 \text{ small enough}.$$
\end{lemma}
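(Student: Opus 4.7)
The plan is to transfer the Shiga--Lalley estimate recalled just before the statement (valid under the envelope measure $\P$) to the corresponding estimate under $\Q$, using the explicit Radon--Nikodym density (\ref{r-n derivative}) from Subsection~\ref{sec: uniqueness}. Write $Z_t := d\Q/d\P\big|_t = \exp\{-M_t - \tfrac12 \langle M\rangle_t\}$ with $M_t = \int\int_0^t \theta(s,x)\,m(ds,dx)$ and $\theta(s,x) = \int_0^s u_r(x)\,dr$, and introduce the stopping time
$$
\tau = \inf\{t \leq \delta^5 : A_\delta \text{ already fails by time } t\}\wedge \delta^5,
$$
so $\{A_\delta^c\} = \{\tau<\delta^5\}$ and for every $s \leq \tau$ the deterministic pointwise bound $u(s,x) \leq G_s f(x) + \sqrt{\delta}\,e^{-(\delta^5-s)|x|}$ holds. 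Optional stopping yields $\Q(A_\delta^c) = \E_\P[Z_\tau\,\indic_{\{\tau<\delta^5\}}]$, which I would split according to whether $Z_\tau$ is close to $1$ or not.

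The crux is a deterministic bound on $\langle M\rangle_\tau = \int_0^\tau (u_s,\theta_s^2)\,ds$. I would split the spatial integral into the bulk $|x|\leq 2$ and the far field. In the bulk $u_s \leq 1+\sqrt\delta$ and $\theta \leq 2\delta^5$, contributing $O(\delta^{10})$ to the integrand on a bounded region. In the far field $G_s f$ is super-exponentially small since $f$ is supported in $[-1-\delta, 1+\delta]$, so $u(s,x) \lesssim \sqrt{\delta}\, e^{-(\delta^5-s)|x|}$ and integration in $r$ yields $\theta(s,x) \lesssim \sqrt{\delta}\, e^{-(\delta^5-s)|x|}/|x|$. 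The $|x|^{-2}$ factor produced by $\theta_s^2$ then keeps the spatial integral bounded uniformly in $s$, and an extra factor $\delta^5$ comes from integrating in $s$. The upshot is a deterministic estimate $\langle M\rangle_\tau \leq C\delta^\kappa$ almost surely for some $\kappa > 7$.

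With this bracket bound in hand I would estimate $Z_\tau-1$. By Burkholder--Davis--Gundy and Markov with a sufficiently large moment, $\P(|M_\tau|>\delta^{7/2})$ is bounded by $\delta^\beta$ with $\beta$ arbitrarily large. By Girsanov applied to (\ref{r-n derivative}), $\bar M_t := M_t + \langle M\rangle_t$ is a $\Q$-martingale with the same bracket, so the analogous tail estimate transfers to $\Q$ using the a.s.\ bracket bound. On the complement of $\{|M_\tau|>\delta^{7/2}\}$, combined with $\langle M\rangle_\tau \ll \delta^{7/2}$, one gets $|Z_\tau - 1|\leq 3\delta^{7/2}$. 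Splitting $\E_\P[Z_\tau\,\indic_{\tau<\delta^5}]$ accordingly,
$$
\Q(A_\delta^c)\leq (1 + 3\delta^{7/2})\,\P(A_\delta^c) + \Q\bigl(|Z_\tau - 1| > 3\delta^{7/2}\bigr)\leq 3\delta^{7/2}
$$
for all $\delta$ small enough: the first term is super-polynomially small by Shiga--Lalley, and the second is $o(\delta^{7/2})$ by the moment estimate above.

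The main technical obstacle is the deterministic $\langle M\rangle_\tau$ bound. The exponential tail $e^{-(\delta^5-s)|x|}$ in the definition of $A_\delta$ decays very slowly as $s\to\delta^5$, and an overly crude treatment of the far field would fail to produce a sufficient power of $\delta$. The essential trick is that $\theta_s^2$ generates the extra $|x|^{-2}$ factor that renders the spatial integral convergent uniformly in $s<\delta^5$ and yields the required polynomial decay.
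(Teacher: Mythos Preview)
Your overall strategy---introduce the stopping time $\tau$, use optional stopping to write $\Q(A_\delta^c)=\E_\P[Z_\tau\indic_{\tau<\delta^5}]$, and then split according to whether $Z_\tau$ is close to $1$---is sound and is essentially a stopping-time reformulation of the paper's argument (the paper conditions on $A_\delta$ and bounds $\P\bigl(Z_{\delta^5}<1-\delta^{7/2}\,\big|\,A_\delta\bigr)$ by Chebyshev instead). The two routes are equivalent in spirit; the entire content is in the bracket estimate.

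There is, however, a genuine gap in your far-field computation. Your bound $\theta(s,x)\lesssim\sqrt\delta\,e^{-(\delta^5-s)|x|}/|x|$ is correct but too crude: plugging it into $u_s\theta_s^2\lesssim\delta^{3/2}e^{-3(\delta^5-s)|x|}/|x|^2$, integrating the $|x|^{-2}$ over $|x|>2$ uniformly in $s$, and then integrating $s$ over $[0,\delta^5]$ yields only $\langle M\rangle_\tau\lesssim\delta^{3/2}\cdot\delta^5=\delta^{13/2}$. Since $13/2<7$, Chebyshev (or BDG at any $p$) then gives $\P(|M_\tau|>\delta^{7/2})\lesssim\delta^{13/2-7}=\delta^{-1/2}$, and the argument does not close. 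The ``essential trick'' you name---the $|x|^{-2}$ from $\theta_s^2$---makes the spatial integral \emph{finite}, but it does not by itself produce enough powers of $\delta$.

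The missing ingredient is the trivial pointwise bound $\theta(s,x)\le\sqrt\delta\,s$ in the far field (since there $u_r\le\sqrt\delta$), which combined with your $\sqrt\delta/|x|$ bound gives $\theta(s,x)\le\sqrt\delta\min(s,1/|x|)$. With this, $\int_{|x|>2}u_s\theta_s^2\,dx\lesssim\delta^{3/2}s$, and the $s$-integral now contributes $\delta^{10}$, so $\langle M\rangle_\tau\lesssim\delta^{23/2}$ and the rest of your argument goes through. Equivalently, use the identity $\int_0^\tau u_s\theta_s^2\,ds=\tfrac13\theta_\tau^3$ and apply H\"older in time, $\bigl(\int_0^{\delta^5}u_r\,dr\bigr)^3\le\delta^{10}\int_0^{\delta^5}u_r^3\,dr$; this is how the paper extracts the extra $\delta^{10}$.
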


\begin{proof}
$A_{\delta}\in \mathcal{F}_{\delta^5}$, hence
\begin{align*}
\Q(A_{\delta}) &= \int_{A_{\delta}}\frac{d\Q}{d\P}d\P \\
&\geq (1-\delta^{7/2})\left( \int_{A_{\delta}\cap \left\{ \frac{d\Q}{d\P}\mid_{\F_{\delta^5}}\geq 1-\delta^{7/2}\right\}}d\P \right)
\end{align*}
Since for $\delta$ small enough, $\P(A_{\delta})\geq 1-\delta^{7/2}$, we only need to show that $$\P\left( \left.\frac{d\Q}{d\P}\right| _{\F_{\delta^5}}\geq 1-\delta^{7/2}\vert A_{\delta} \right)\geq 1-\delta^{7/2}.$$
By (\ref{r-n derivative}),
\begin{align*}
\P \left( \left.\frac{d\Q}{d\P}\right| _{\F_{\delta^5}}\geq 1-\delta^{7/2}\vert A_{\delta} \right) \geq \P \left( \left| \int_0^{\delta^5}\int \theta(t,x)m(dt,dx)+\frac{1}{2}\int_0^{\delta^5}(u_t,\theta(t,\cdot)^2)dt \right|\leq \delta^{7/2} \vert A_{\delta}\right).
\end{align*}
By Chebyshev's inequality,
\begin{align*}
&~~~~\P \left( \left| \int_0^{\delta^5}\int \theta(t,x)m(dt,dx)+\frac{1}{2}\int_0^{\delta^5}(u_t,\theta(t,\cdot)^2)dt \right|\geq \delta^{7/2} \vert A_{\delta}\right)\\
&\leq \frac{2}{\delta^7}\left\{ \E\left( \int_0^{\delta^5}(u_t,\theta(t,\cdot)^2)dt \vert A_{\delta}\right)+\E \left( \left( \int_0^{\delta^5}(u_t,\theta(t,\cdot)^2)dt \right)^2 \vert A_{\delta}\right) \right\}.
\end{align*}
Given $A_{\delta}$,
\begin{align*}
u(t,x)\leq \sqrt{\delta}\e^{-(\delta^5-t)|x|}+\sqrt{\frac{3}{2\pi t}}\int_{-2}^2\e^{-\frac{3|x-y|^2}{2t}}f(y)dy.
\end{align*}
By H\"{o}lder inequality,
\begin{align*}
\E\left( \int_0^{\delta^5}(u_t,\theta(t,\cdot)^2)dt \vert A_{\delta}\right)&\leq \E \left( \int \left(\int_0^{\delta^5}u(t,x)dt\right)^3dx \vert A_{\delta}\right)\\
&\leq \frac{1}{4}\delta^{10} \int \int_0^{\delta^5}\E(u(t,x)^3\vert A_{\delta})dtdx\\
&\leq \frac{1}{4}\delta^{10} \left\{ 3\delta^{3/2}\int \int_0^{\delta^5}\e^{-3(\delta^5-t)|x|}+\frac{3\sqrt{3}}{\sqrt{2\pi t}}\int\int_0^{\delta^5} \int_{-2}^2\e^{-\frac{3|x-y|^2}{2t}}f(y)dydtdx \right\}\\
&\leq \frac{1}{2}\delta^{23/2}+3\delta^{15}.
\end{align*}
Similarly,
\begin{align*}
\E \left( \left( \int_0^{\delta^5}(u_t,\theta(t,\cdot)^2)dt \right)^2 \vert A_{\delta}\right)\leq C\delta^{23}.
\end{align*}
Therefore,
\begin{align*}
\P \left( \left| \int_0^{\delta^5}\int \theta(t,x)m(dt,dx)+\frac{1}{2}\int_0^{\delta^5}(u_t,\theta(t,\cdot)^2)dt \right|\geq \delta^{7/2} \vert A_{\delta}\right)\leq \delta^{9/2},
\end{align*}
and we have the expected result.
\end{proof}
The previous result helps to get a lower bound for the total density in a small time period which is our first desired property.
\begin{corollary} \label{spdedens}
\label{density in period}
Let $f$ be the function given as: $f(x)=1$ for $x\in[-r,r]$ with $r\geq 1$, $f(x)=0$ for $x\in[-r-\delta^{5/2},r+\delta^{5/2}]^c$ and $f$ is linear in the other parts, then there exists constants $L_1(r)<L_2(r)<\infty$,
\begin{align*}
\P \left( \forall x\in \left[-r-2\delta^{5/2},r+2\delta^{5/2}\right],L_1\delta^5\leq\int_{\delta^5/2}^{\delta^5}\hat{u}_t(x)dt\leq L_2\delta^5\right) > 1-\delta^{7/2},
\end{align*}
for all $\delta>0$ small.
\end{corollary}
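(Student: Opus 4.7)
The strategy is to combine Lemma \ref{diff real heat} (or its routine adaptation to the initial datum used here) with a short Gaussian calculation. On the event $A_\delta$ of Lemma \ref{diff real heat}, one has for every $t\le\delta^5$ and every $x\in\mathbb R$,
\[
\hat u_t(x)=G_tf(x)+N(t,x),\qquad |N(t,x)|\le\sqrt\delta\,e^{-(\delta^5-t)|x|}.
\]
For $x$ in the bounded window $[-r-2\delta^{5/2},r+2\delta^{5/2}]$ one has $|x|\le r+2$ and $\delta^5-t\ge 0$, so the exponential factor is bounded by $1$ and the stochastic correction is uniformly $O(\sqrt\delta)$. Hence it is negligible compared to the leading term $G_tf$, and the problem reduces to a deterministic estimate on the Gaussian convolution.

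The analysis of $G_tf(x)=\E[f(x+B_{t/3})]$ is straightforward because the scales match: for $t\in[\delta^5/2,\delta^5]$ the Brownian increment $B_{t/3}$ has standard deviation comparable to $\delta^{5/2}$, which is of the same order as both the distance from the window to the plateau $[-r,r]$ where $f\equiv 1$ and the width of the linear transition zone of $f$. The upper bound $G_tf(x)\le 1$ is immediate from $f\le 1$. For the lower bound, I would simply write
\[
G_tf(x)\ge \P\left(x+B_{t/3}\in[-r,r]\right)\ge c_0(r)>0,
\]
uniformly in $x$ in the window and in $t$ in the interval, because for such $x$ and $t$ the Gaussian law of $B_{t/3}$ still puts at least a fixed constant fraction of its mass into the translate $[-r,r]-x$ (the hypothesis $r\ge 1$ ensures the plateau is macroscopic relative to $\sqrt t\asymp\delta^{5/2}$, so in fact most of the Gaussian mass lands in it).

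Combining these estimates, on $A_\delta$ and for $\delta$ sufficiently small one obtains $c_0(r)/2\le\hat u_t(x)\le 2$ uniformly in the prescribed $(t,x)$ region; integrating over $t$ in an interval of length $\delta^5/2$ then yields the claim with $L_1=c_0(r)/4$ and $L_2=1$, and the probability bound follows from $\Q(A_\delta)\ge 1-O(\delta^{7/2})$. The one step requiring care, which I expect to be the main obstacle, is verifying that Lemma \ref{diff real heat} genuinely applies to the present initial datum: the lemma is written for a trapezoidal profile of width $1+\delta$, whereas here the profile has width $r+\delta^{5/2}$. The Shiga estimate on $|u-G_tf|$ and the Girsanov Radon--Nikodym computation behind the lemma both go through for any bounded, compactly supported $f$, but the constants in the $\delta^{7/2}$-probability bound have to be retraced for this new profile; once this bookkeeping is done, the rest is a straightforward Gaussian calculation.
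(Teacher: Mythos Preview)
Your proposal is correct and follows essentially the same approach as the paper: invoke Lemma \ref{diff real heat} to reduce to the deterministic Gaussian convolution $G_tf$, bound the error by $\sqrt{\delta}$ on the bounded window, obtain a uniform positive lower bound for $G_tf(x)$ from the Gaussian mass landing in $[-r,r]$, and integrate in $t$. Your caveat about adapting Lemma \ref{diff real heat} to the new initial profile (width $r+\delta^{5/2}$ rather than $1+\delta$) is well placed---the paper simply applies the lemma without comment, so you are in fact being more careful than the paper on this point.
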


\begin{proof}
By Lemma \ref{diff real heat}, we know that out of probability $\delta^{7/2}$,
\begin{align*}
\left| \hat{u}_t(x)-G_t f(x) \right|\leq \sqrt{\delta}, \forall t\in [0,\delta^5].
\end{align*}
For any $x\in \left[-r-2\delta^{5/2},r+2\delta^{5/2}\right]$ and any $t\in \left[ \delta^5/2,\delta^5 \right]$, given that $\delta$ is small enough,
\begin{align*}
G_t f(x) &\geq G_{\delta^5}\indic_{[-r,r]}(r+2\delta^{5/2})\\
&\geq 2L_1(r)
\end{align*}
for some constant $L_1(r)>0$.
Hence, for any $x\in \left[-r-2\delta^{5/2},r+2\delta^{5/2}\right]$ and any $t\in \left[ \delta^5/2,\delta^5 \right]$,
$$\hat{u}_t(x)\geq 2L_1.$$
The upper bound $L_2\delta^5$ follows from the same reason as the lower bound.
\end{proof}

In our original percolation model, the edges are not directed. However, it suffices to show percolation in the related model where the vertical edges are directed upward. For this we shall build a block argument, reducing the analysis to that of an oriented percolation model. Here, we keep the notation as in \cite{durrett}. Let
$$\mathcal{L}_0=\{(m,n)\in \Z\times \Z_+:m+n \text{ is even}\}.$$
$\mathcal{L}_0$ is made into a graph by drawing oriented edge from $(m,n)$ to $(m-1,n+1)$ or $(m+1,n+1)$. Random variables $\omega(m,n)\in \{0,1\}$ are to indicate whether $(m,n)$ is open ($\omega(m,n)=1$) or close ($\omega(m,n)=0$). We say that there is a path from $(m,n)$ to $(m',n')$ if there is a sequence of points $x_n=m,\dots,=x_{n'}=m'$ so that $|x_l-x_{l-1}|=1$ for $n<l\leq n'$ and $\omega(x_l,l)=1$ for $n\leq l\leq n'$. Let $$\mathbf{C}_0=\{(m,n):(0,0)\rightarrow (m,n) \}$$
be the cluster containing the origin.

The following steps are to construct the blocks which are considered as sites in the renormalized graph, to define when a renormalized site (block) is open and to define when an edge is open in the renormalized graph. We can then use the comparison theorem in \cite{durrett}. The definition of renormalized sites being open demands a more refined treatment of the approximate density, i.e. one needs to look at a smaller scale, and for those purposes $N^{-3/10}$ is adequate.

\begin{definition}
For a closed interval $I=[a,b]$, $\hat{\xi}$ is said to be $(I,\delta,N)$-good if for the continuous function $f$ satisfying $f(x)= 1$ for $x\in I$, $f(x)=0$ for $x\in[a-\delta,b+\delta]^c$ and $f$ is linear in the other parts,  
$$\sum_{x\in J}\hat{\xi}(x)=\lfloor f(iN^{-3/10})N^{1/10}\rfloor,$$
for any interval $J\subset I$ of the form $[iN^{-3/10},(i+1)N^{-3/10}]$ but for $J\cap I^c\neq\varnothing$, $\sum_{x\in J}\hat{\xi}(x)=0$.
\end{definition}

Corollary \ref{spdedens} and 
Lemma \ref{diff real heat} immediately give the following result for the discrete horizontal process. In the following argument, we take $\delta$ and $N$ so that $\delta^{5}N^{2/5}$ to be an even number.

\begin{corollary} \label{block0}
There exists $\delta_0 > 0$ so that 
given $1 \leq r \leq 2$ and $0<\delta < \delta_0$, 
if $\hat \xi_0 $ is $([-r,r],\delta^{5/2},N)$-good on $\Z/N^{6/5}$, then 
for $N$ large, outside of probability $5 \delta^{7/2}$,
for each $x \in [- r- 2\delta^{5/2}, r+ 2\delta^{5/2}]$, $A\hat{\xi}_k (x) \geq \
L_1/2$ for each $\delta^5 N^{2/5}/2 \leq k \leq \delta^5N^{2/5}$.

\end{corollary}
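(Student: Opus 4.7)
The proof is a direct transfer of the SPDE lower bound in Corollary \ref{spdedens} to the discrete pre-limit via the weak convergence of Theorem \ref{thm real spde}. First, observe that if $\hat\xi_0$ is $([-r,r],\delta^{5/2},N)$-good then the linear interpolation of $A\hat\xi_0$ converges in $\mathcal{C}$ (under each $\|\cdot\|_{-\lambda}$, $\lambda>0$) to the continuous function $f$ considered in Corollary \ref{spdedens}, namely $f\equiv 1$ on $[-r,r]$, $f\equiv 0$ outside $[-r-\delta^{5/2}, r+\delta^{5/2}]$, and linear on the two transition intervals. Indeed, the mesh $N^{-3/10}$ built into the definition of \emph{good} is much finer than the mesoscopic scale $\delta^{5/2}$ on which $f$ varies (for $N$ large), so the approximation error is $O(N^{-1/10})$ in sup-norm and also fits the $\mathcal{C}$-topology.

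By the argument given inside the proof of Corollary \ref{spdedens} — which in fact yields the pointwise control $\hat u_t(x)\ge 2L_1$, stronger than the integrated bound in the statement — there is an event $A\in\mathcal{F}_{\delta^5}$ with $\P(A)\ge 1-\delta^{7/2}$ on which
\[
\hat u_t(x)\ \ge\ 2L_1\qquad\forall\,t\in[\delta^5/2,\delta^5],\ \forall\,x\in[-r-2\delta^{5/2},r+2\delta^{5/2}].
\]
Now apply Theorem \ref{thm real spde} with the initial datum above: $A\hat\xi_{\lfloor tN^{2/5}\rfloor}$ converges in law to $\hat u_t$ in $D([0,\infty),\mathcal{C})$, and Skorohod representation allows us to realize almost sure convergence on some probability space. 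Since $\hat u$ is continuous in $t$ (cf.\ (\ref{true decomp cont})) and the Kolmogorov-type moment estimates of Lemma \ref{real proc tight} give a quantitative modulus of continuity for $A\hat\xi$, the convergence is in fact uniform on the compact rectangle $[\delta^5/2,\delta^5]\times[-r-2\delta^{5/2},r+2\delta^{5/2}]$. Choosing the moment parameter $p$ large as a function of $\delta$, one obtains, for all $N$ sufficiently large and outside an event of probability at most $4\delta^{7/2}$,
\[
\sup_{t\in[\delta^5/2,\delta^5]}\,\sup_{x\in[-r-2\delta^{5/2},r+2\delta^{5/2}]}\,\bigl|A\hat\xi_{\lfloor tN^{2/5}\rfloor}(x)-\hat u_t(x)\bigr|\ <\ \tfrac{3L_1}{2}.
\]
Intersecting with $A$ gives a set of probability at least $1-5\delta^{7/2}$ on which $A\hat\xi_k(x)\ge 2L_1-\tfrac{3L_1}{2}=\tfrac{L_1}{2}$ for every $k\in[\delta^5 N^{2/5}/2,\delta^5 N^{2/5}]$ and every $x$ in the stated spatial range.

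The technical heart is the upgrade of the weak-type convergence to the uniform pointwise closeness displayed above. This requires tuning the moment exponent $p$ in Lemma \ref{real proc tight} sufficiently large, covering the compact rectangle by a polynomially large $(x,t)$-grid and using a standard chaining argument, and verifying that the decay $N^{-\alpha p/2}$ handily beats the resulting union bound while absorbing the $O(N^{-1/10})$ initial-condition discrepancy — all within the probability budget $4\delta^{7/2}$ allotted to this step. Once this quantitative convergence is in hand, the combination with the SPDE lower bound is routine and the factor $5$ in $5\delta^{7/2}$ reflects the $\delta^{7/2}$ lost to the SPDE event $A$ together with the $4\delta^{7/2}$ lost to the transfer from $\hat u$ to $A\hat\xi$.
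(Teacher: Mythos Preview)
Your proposal is correct and follows the same route the paper intends: the paper states this corollary as an immediate consequence of Corollary~\ref{spdedens} and Lemma~\ref{diff real heat}, giving no further details, so the transfer from the SPDE lower bound $\hat u_t(x)\ge 2L_1$ to the discrete process via the weak convergence of Theorem~\ref{thm real spde} is exactly what is meant. One small remark: once you invoke Skorohod representation, the convergence in $D([0,\infty),\mathcal{C})$ with continuous limit already yields a.s.\ uniform convergence on the compact rectangle, so the additional appeal to Lemma~\ref{real proc tight} and a chaining argument is not strictly needed to secure the $4\delta^{7/2}$ budget---any positive probability bound is achievable for $N$ large, and the specific constant $5$ in $5\delta^{7/2}$ is not forced by the argument.
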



\begin{definition}
Suppose $\hat{\xi}_0$ is $([a,b],\delta^{5/2},N)$-good. Let a $[a,b]$-subordinated process on certain horizontal layer $\{\tilde{\xi}_k(x)\}_{0\leq k\leq \delta^5N^{2/5}}$ be $\{\hat{\xi}_k(x)\}_{0\leq k\leq \delta^5N^{2/5}}$ killed on $[a-1/2,b+1/2]^c$, i.e. for $0\leq k\leq\delta^5N^{2/5}$
\begin{align*}
\tilde{\xi}_{k+1}(x)=\begin{cases}
1 &\text{ if }\sum_{j\leq k}\tilde\xi_j(x)=0\text{ and }\sum_{y\in\mathcal{N}_k(x)}\tilde\eta_{k+1}(y,x)\geq 1\\
0 &\text{ otherwise},
\end{cases}
\end{align*}
where $\mathcal{N}_k(x)=\{y\sim x:\tilde\xi_k(y)=1\}$ and $\tilde\eta_{k+1}(y,x)=0$ if $x\in[a-1/2,b+1/2]^c$ but over $x,y\in[a-1/2,b+1/2]$, $(\tilde\eta_{k+1}(y,x))_{k,y,x}$ is an i.i.d. sequence of random variables with distribution $\text{Bernoulli}(1/(2N))$.
\end{definition}
Note that this killing property means that no new particles are generated outside $[a-1/2,b+1/2]$ and it is to guarantee an independent structure in the renormalization argument. The $[a,b]$ will not appear when we use the subordinated process since it will always be clear from the context.

\begin{corollary} \label{block1}
There exists $\delta_0 > 0$ so that 
under the conditions of Corollary \ref{block0}, for $0<\delta<\delta_0$ and $N$ large, outside probability $6\delta^{7/2}$,
for each $x \in [- r- 2\delta^{5/2}, r+ 2\delta^{5/2}]$, $A\tilde{\xi}_k (x) \geq \
L_1/2 $ for each $\delta^5N^{2/5}/ 2 \leq k \leq \delta^5N^{2/5}$.

\end{corollary}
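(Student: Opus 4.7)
The plan is to couple $\tilde\xi$ with $\hat\xi$ so that the two processes coincide on the relevant region with high probability, and then invoke Corollary~\ref{block0}. Concretely, I run both processes using the same family of Bernoulli variables $\eta_{k+1}(y,x)$, with the modification that for $\tilde\xi$ every $\eta_{k+1}(y,x)$ with $x \notin [a-1/2, b+1/2]$ is forced to $0$. Since $\hat\xi_0$ has support contained in $[-r,r]$, which lies well inside $[a-1/2, b+1/2]$ for $\delta$ small, a straightforward induction shows that $\tilde\xi_k \equiv \hat\xi_k$ for all $k \leq \delta^5 N^{2/5}$ on the event
$$B^c = \bigl\{\hat\xi_k(y) = 0 \text{ for all } 0 \leq k \leq \delta^5 N^{2/5} \text{ and all } y \notin [a-1/2, b+1/2]\bigr\}.$$
On $B^c$ we have $A\tilde\xi_k(x) = A\hat\xi_k(x)$ for every $x \in [-r-2\delta^{5/2}, r+2\delta^{5/2}]$, so Corollary~\ref{block0} yields the stated lower bound outside an event of probability at most $5\delta^{7/2} + \P(B)$.

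The remaining work is to prove $\P(B) \leq \delta^{7/2}$. Since $\hat\xi$ is dominated by the envelope $\xi$, it suffices to bound the probability that some particle of $\xi$ ever occupies $[a-1/2, b+1/2]^c$ within $\delta^5 N^{2/5}$ steps. The initial condition consists of $O(N^{2/5})$ independent particles located in $[-r,r]$, each generating its own critical branching random walk, and the bad event forces some such subtree to have maximal displacement at least $1/2 - \delta^{5/2}$. Kesten's estimate~(\ref{eqn: maximal displacement}), after the parabolic rescaling $(k,x) \mapsto (\delta^{-5}k, \delta^{-5/2}x)$ that reflects the diffusive length $\delta^{5/2}$ associated with the step variance $O(N^{-2/5})$ over time $\delta^5 N^{2/5}$, bounds the conditional probability of such an excursion by $C_a \delta^{5a/2}$ for any $a>0$. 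Combining with the $O(1/(\delta^5 N^{2/5}))$ survival probability of each subtree to half-time and summing over the $O(N^{2/5})$ initial particles gives $\P(B) = O(\delta^{5a/2 - 5})$, which is far smaller than $\delta^{7/2}$ once $a$ is chosen large enough (any $a \geq 4$ suffices).

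The one technical point worth checking is the rescaled version of Kesten's bound: one must confirm that the constants $C_a$ in~(\ref{eqn: maximal displacement}) are uniform in the time horizon, so that applying the inequality to the BRW on the shorter window $[0,\delta^5 N^{2/5}]$ is legitimate. This follows from the proof in~\cite{kesten}, which relies only on moment bounds of the step distribution that are preserved under parabolic rescaling. With that in hand, the $N^{2/5}$ factors from the union bound and from the survival probability cancel exactly, yielding a bound on $\P(B)$ purely in terms of $\delta$, and the argument closes.
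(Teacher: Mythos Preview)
Your proposal is correct and follows essentially the same route as the paper: couple $\tilde\xi$ with $\hat\xi$ via the common Bernoulli variables, observe that they agree on the event that no particle of the envelope escapes $[a-1/2,b+1/2]$ by time $\delta^5 N^{2/5}$, and bound the escape probability by decomposing over the $O(N^{2/5})$ initial ancestors and applying Kesten's maximal-displacement estimate together with the $O(1/(\delta^5 N^{2/5}))$ survival probability. The only cosmetic difference is the exponent you extract from Kesten's bound ($\delta^{5a/2}$ versus the paper's $\delta^{5a}$), but either way the $N^{2/5}$ factors cancel and a large enough choice of $a$ yields the required $\delta^{7/2}$.
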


\begin{proof}
We suppose $\{\tilde\xi_n\}_{0\leq n\leq \delta^5N^{2/5}}$ is coupled with a true process $\{\hat\xi_n\}_{0\leq n\leq \delta^5N^{2/5}}$ and an envelope process $\{\xi_n\}_{0\leq n\leq \delta^5N^{2/5}}$. For any starting site $z$ such that $\tilde\xi_0(z)=1$, let $\xi^z_n$ be the envelope process with initial condition $\indic_{\{z\}}$. For any $0\leq n\leq\delta^5N^{2/5}$, we have
$$\xi_n(x)=\sum_z\xi^z_n(x),$$
where the sum is over the initial condition that is $([-r,r],\delta^{5/2},N)$-good. The event
\begin{equation}
\label{eqn:tilde process}
\left\{\exists x\in[-r-1/2,r+1/2]\text{ and }0\leq k\leq \delta^5N^{2/5}:\tilde{\xi}_k(x)=0\text{ but }\hat\xi_k(x)=1\right\}
\end{equation}
has probability bounded by
$$\sum_{z}2\P\left(R^z_{\delta^5N^{2/5}}\geq 1/2\right),$$
where the sum is over the initial condition that is $([-r,r],\delta^{5/2},N)$-good and 
$$R^z_n=\max_{m\leq n}\{x\in\Z/N^{6/5}:\xi_m(x)\neq0\}-z$$
is the maximal displacement of $\xi^z$ at time $n$. By (ii) of Lemma \ref{binomial critical} and Kesten's result (\ref{eqn: maximal displacement}), we have
$$\P(R^z_{\delta^5N^{2/5}}>1/2)\leq \frac{4}{\delta^5N^{2/5}}\cdot C_a\delta^{5a}$$
for any $a>0$. Hence the probability of event (\ref{eqn:tilde process}) can be bounded by
$8C_a\delta^{5(a-1)}$ and we can conclude the proof by choosing $a>2$.
\end{proof}

For our block argument the result above provides many sites at level $1$ that are connected to sites occupied by $\hat \xi$ at level 0.  This by itself is insufficient since we require these (level 1) sites to be $([-r-\delta^{5/2},r+\delta^{5/2}],\delta^{5/2},N)$-good.  The following is an important step in this direction.

\begin{lemma}
\label{lem:cum in J}
Let $\tilde \xi_0$ be as in Corollary \ref{block1} and $J$ be a fixed interval of length 
$N^{-3/10}$ in $[-r -2 \delta ^{5/2},r+2 \delta ^{5/2} ]$.  
Then the event that 
\begin{equation} \label{bund}
\min_{x\in[-r -2 \delta ^{5/2},r+2 \delta ^{5/2} ]} \sum_{k= \delta^5N^{2/5}  /2}^{ \delta^5 N^{2/5} } A\tilde{\xi}_k(x) \geq L_1\delta^5 N^{2/5}/4 .
\end{equation}
but 
$$
\sum_{x \in J} \sum_{k= \delta^5 N^{2/5}/2}^{ \delta^5 N^{2/5} } \tilde \xi_k (x) \ < L_1\delta^5 {\sqrt N}/32,
$$
has probability less than $e^{-c \delta^5 \sqrt N}$ for universal $c>0$.
\end{lemma}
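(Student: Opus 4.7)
\emph{Proof plan.} The plan is to recognize the cumulative count $N_J:=\sum_{x\in J}\sum_{k=T_0}^{T_1}\tilde\xi_k(x)$ (with $T_0=\delta^5N^{2/5}/2$, $T_1=\delta^5N^{2/5}$) as the number of sites in $J$ whose first infection time $\tau_x$ falls in $[T_0,T_1]$---using that $\sum_k\tilde\xi_k(x)\in\{0,1\}$ by attrition---and to analyse it as a sum of conditionally independent Bernoulli variables via a Chernoff-type bound. Writing $\Delta_k=\sum_{x\in J}\tilde\xi_{k+1}(x)$ and $Y_k(x)=\sum_{y\sim x}\tilde\xi_k(y)=2N^{1/5}A\tilde\xi_k(x)$, the variables $\{\tilde\xi_{k+1}(x)\}_{x\in J}$ are conditionally independent given $\F_k$ (they depend on disjoint families of $\eta$'s), with $\P(\tilde\xi_{k+1}(x)=1\mid\F_k)=\indic_{\{\tau_x>k\}}q_k(x)$, where $q_k(x)=1-(1-(2N)^{-1})^{Y_k(x)}\geq Y_k(x)/(4N)$. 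I would lower bound the predictable compensator $A_{T_1}:=\sum_{k=T_0}^{T_1}\E[\Delta_k\mid\F_k]$ on the event under consideration, and then use a standard exponential inequality for sums of conditional Bernoullis.

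The key input for the compensator bound is a deterministic identity coming from attrition: since $\sum_k\tilde\xi_k(y)\leq 1$ for every site $y$,
$$
\sum_{k=T_0}^{T_1}Y_k(x)=\sum_{y\sim x}\sum_k\tilde\xi_k(y)\leq|\{y\sim x\}|=2N.
$$
Combining $q_k(x)\geq Y_k(x)/(4N)$ with $\indic_{\{\tau_x>k\}}=1-\indic_{\{\tau_x\leq k\}}$, one obtains
$$
A_{T_1}\geq \frac{1}{4N}\sum_{x\in J\cap\Z/N^{6/5}}\sum_k Y_k(x)\;-\;\frac{N_J\cdot 2N}{4N}.
$$
On the density event \eqref{bund}, $\sum_k Y_k(x)=2N^{1/5}\sum_k A\tilde\xi_k(x)\geq L_1\delta^5 N^{3/5}/2$ for every $x$ in the large interval; since $|J\cap\Z/N^{6/5}|\sim N^{9/10}$, the first term is at least $L_1\delta^5\sqrt N/8$. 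The correction is $N_J/2$, which on the bad event $\{N_J<L_1\delta^5\sqrt N/32\}$ is at most $L_1\delta^5\sqrt N/64$. Altogether $A_{T_1}>L_1\delta^5\sqrt N/16$ on the intersection of interest.

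For the concentration step I would iterate $\E[e^{-\theta X}\mid\F]\leq e^{-p(1-e^{-\theta})}$ for $X\sim\mathrm{Bernoulli}(p)$ across the conditionally independent increments to obtain, for every $\theta>0$,
$$
\E\!\left[\exp\!\left(-\theta N_J+(1-e^{-\theta})A_{T_1}\right)\right]\leq 1,
$$
and apply Markov's inequality at $\theta=\log 2$ (matched to the ratio $\tfrac{L_1\delta^5\sqrt N/32}{L_1\delta^5\sqrt N/16}=\tfrac{1}{2}$) to conclude
$$
\P\!\left(N_J\leq \tfrac{L_1\delta^5\sqrt N}{32},\ A_{T_1}\geq \tfrac{L_1\delta^5\sqrt N}{16}\right)\leq \exp\!\left(-\tfrac{L_1(1-\log 2)}{32}\delta^5\sqrt N\right),
$$
which combined with the compensator bound yields the lemma.

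The main obstacle I expect is the self-referential character of the compensator estimate: because of $\indic_{\{\tau_x>k\}}$, $A_{T_1}$ is coupled to the very event one is trying to control, and any naive attempt to decouple via an auxiliary event such as $\{\sup_{k,x}A\tilde\xi_k(x)\leq L_2\}$ would import a probability of only polynomial order in $\delta$, incompatible with the claimed exponential decay. The attrition-driven deterministic bound $\sum_k Y_k(x)\leq 2N$ is what closes the loop: it bounds the lost compensator mass linearly in $N_J$, and on the bad event this loss is a factor $8$ below the main term, which is exactly what is needed to retain a uniformly positive lower bound on $A_{T_1}$.
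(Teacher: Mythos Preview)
Your approach is essentially the paper's: both exploit that on the density event \eqref{bund} there are of order $\delta^5 N^{3/5}$ occupied source--time pairs within reach of $J$, each giving a conditionally independent chance $\approx N^{-1/10}$ of creating a fresh $J$-site, and then conclude via a Chernoff-type bound. The paper phrases this from the source side (each occupied $(z,k)$ shoots into $J$) and simply invokes ``standard tail bounds of Binomial'', while you phrase it from the target side and make the supermartingale $\E\bigl[e^{-\theta N_J+(1-e^{-\theta})A_{T_1}}\bigr]\le1$ explicit; these are dual bookkeepings of the same count.

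There is, however, a gap in your compensator bound worth flagging (and the paper's sketch shares it). When you split $\indic_{\{\tau_x>k\}}=1-\indic_{\{\tau_x\le k\}}$, the correction term is bounded by $\tfrac12\,\#\{x\in J:\tau_x\le T_1\}=\tfrac12(N_J+N_J^-)$, where $N_J^-:=\#\{x\in J:\tau_x<T_0\}$ counts $J$-sites already visited \emph{before} the window $[T_0,T_1]$. You subtract only $N_J/2$, implicitly taking $N_J^-=0$; but hypothesis \eqref{bund} gives no control on $N_J^-$, and your attrition bound $\sum_k Y_k(x)\le 2N$ is useless once $N_J^-$ is allowed to be large. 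The cleanest repair is to run the whole argument with $N_J^{\mathrm{tot}}:=\sum_{x\in J}\sum_{k=0}^{T_1}\tilde\xi_k(x)$ in place of $N_J$: then the correction is exactly $N_J^{\mathrm{tot}}/2$, your self-referential closure works verbatim, and the conclusion $N_J^{\mathrm{tot}}\ge L_1\delta^5\sqrt N/32$ is all that the vertical-connection step in Proposition~\ref{prop:1 step} actually needs.
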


\begin{proof}
Let $y$ be the midpoint of $J$ and let
$$
\tau = \inf \left\{ k \geq  \delta^5N^{2/5} /2: \sum_{j=  \delta^5 N^{2/5}/2} ^k A\tilde{\xi}_j(y)
\geq L_1 \delta^5 N^{2/5}/4\right\}.
$$
For $N$ large enough the event $\{ \tau <  \delta^5 N^{2/5}\}$ is contained in event that 
\eqref{bund} happens.  For the proof we note that for every $z$ within $N^{-1/5}$ of $y$ (the range of random walk $\{S_i\}$ starting from $y$), there are at least $N^{9/10} /2$ points of $J$ in $[z-N^{-1/5},z+N^{-1/5}]$ and so while
$\sum_{x \in J} \sum_{k= N^{2/5} \delta^5 /2}^{ N^{2/5} \delta^5 } \tilde \xi_k (x) \ < L_1\delta^5 {\sqrt N}/4$, each such $(z,k)$ pair with $\tilde \xi_k(z) = 1$ represents a probability $$\frac{N^{9/10} /2-L_1\delta^5 {\sqrt N}/4 }{2N}$$ of yielding a fresh occupied site for
$\tilde \xi $ in $J$ at time $k+1$.  The result now follows from standard tail bounds of $\text{Binomial}\left(\frac{L_1\delta^5N^{3/5}}{4},\frac{N^{9/10}/2-\delta^5\sqrt{N}/4}{2N}\right)$.

\end{proof}

Let $\{\tilde\xi_k^i\}_{0\leq k\leq \delta^5N^{2/5}}$ be the subordinate process after killing at level $i\in\mathbb{N}$, where the initial configuration will be recursively defined as indicated at the end of Proposition \ref{prop:1 step} and the subordination effect indicated by the corresponding interval where the configuration is good. 

With the same initial condition, $\{\tilde\xi_k^i\}_{0\leq k\leq \delta^5N^{2/5}}$ follows the same distribution on any vertical level $i$. We will first discuss how the vertical connections behave between layer $0$ and layer $1$ as follows. 
Suppose $\tilde{\xi}^0_0$ is $([-r,r],\delta^{5/2},N)$-good. Until $\delta^5N^{2/5}$ time steps, there is a certain amount of sites $x$'s such that $\tilde{\xi}^0_k(x)=1$. The opening probability of a vertical edge is $\kappa N^{-2/5}$, in the following proposition, we will show that with large probability, the open vertical edge $\langle (x,0),(x,1)\rangle$ can make the initial profile at layer $1$ be $([-r-\delta^{5/2},r+\delta^{5/2}],\delta^{5/2},N])$-good.


\begin{proposition}
\label{prop:1 step}
Given $1 \leq r \leq 2$ and $\delta < \delta_0$, 
there exists vertical connection constant $C_2$, so that for $\kappa>C_2$ and $N$ large enough,
if $\tilde \xi^0_0 $ is $([-r,r],\delta^{5/2},N)$-good on $\Z/N^{6/5} \times \{0 \}$, then 
outside of probability $6 \delta^{7/2}$, on layer $1$,
$\tilde\xi^{1}_0 $ is $([-r-\delta^{5/2},r+\delta^{5/2}],\delta^{5/2},N)$-good on $\Z/N^{6/5}\times\{1\}$.
$\tilde{\xi}^{1}_0 (x) = 1 $ implies that $\tilde{\xi}^0_k (x) = 1 $ for some
$k \in [\delta ^5N^{2/5}/2, \delta^5N^{2/5} ]$ and vertical edge $\langle(x,0),(x,1)\rangle$ is open.
\end{proposition}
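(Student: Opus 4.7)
The plan is to show that on the high-probability event of Corollary \ref{block1}, every length-$N^{-3/10}$ block of $[-r-2\delta^{5/2},r+2\delta^{5/2}]$ accumulates enough visits by $\tilde\xi^{0}$ during the time window $[\delta^5N^{2/5}/2,\delta^5N^{2/5}]$ that the independent vertical edges of probability $\kappa N^{-2/5}$ produce at least $\lfloor f(iN^{-3/10})N^{1/10}\rfloor$ candidates per block. Then I simply \emph{declare} $\tilde\xi^{1}_0(x)=1$ on exactly the required number of candidates per block, chosen by some prefixed deterministic rule, and $0$ elsewhere; this yields a $([-r-\delta^{5/2},r+\delta^{5/2}],\delta^{5/2},N)$-good configuration with the desired implication built in.

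Concretely, first I would apply Corollary \ref{block1}: outside probability $6\delta^{7/2}$, the lower bound (\ref{bund}) holds uniformly on $[-r-2\delta^{5/2},r+2\delta^{5/2}]$. On this event, Lemma \ref{lem:cum in J} applies to each of the $O(N^{3/10})$ blocks $J=[iN^{-3/10},(i+1)N^{-3/10}]$ contained in that interval, so a union bound gives that, outside an additional $O(N^{3/10})e^{-c\delta^5\sqrt N}$ probability, \emph{every} such $J$ collects at least $V_0:=L_1\delta^5\sqrt N/32$ first-time visits by $\tilde\xi^{0}$ during the time window.

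Next, because the vertical edges $\{\langle (x,0),(x,1)\rangle:x\in\Z/N^{6/5}\}$ are independent Bernoulli$(\kappa N^{-2/5})$ variables, independent of the horizontal process, conditionally on the set of visited sites the number $Y(J)$ of sites in $J$ that are both visited and carry an open upward edge stochastically dominates $\mathrm{Binomial}(V_0,\kappa N^{-2/5})$, with mean at least $L_1\kappa\delta^5N^{1/10}/32$. Choose $C_2=C_2(L_1,\delta)$ with $L_1 C_2\delta^5/32\geq 2$; then for $\kappa>C_2$ a standard Chernoff bound gives
\[
\P\bigl(Y(J)<N^{1/10}\bigr)\leq e^{-cN^{1/10}},
\]
and a second union bound over the $O(N^{3/10})$ blocks contributes only $O(N^{3/10})e^{-cN^{1/10}}$ to the failure probability. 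Since $N^{1/10}\geq\lfloor f(iN^{-3/10})N^{1/10}\rfloor$ for every $J$, we can on this combined event pick the required $\lfloor f(iN^{-3/10})N^{1/10}\rfloor$ candidates from $Y(J)$ in each $J\subset[-r-\delta^{5/2},r+\delta^{5/2}]$ by a fixed rule (e.g.\ the leftmost sites in the canonical grid order) and set $\tilde\xi^{1}_0=0$ on every $J$ meeting $[-r-\delta^{5/2},r+\delta^{5/2}]^c$, yielding the required good profile at layer $1$ with $\tilde\xi^{1}_0(x)=1$ only at visited sites of $\tilde\xi^{0}$ with an open upward edge.

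The delicate point is reconciling the three layers of error (the $6\delta^{7/2}$ from Corollary \ref{block1}, the $O(N^{3/10})e^{-c\delta^5\sqrt N}$ from Lemma \ref{lem:cum in J}, and the $O(N^{3/10})e^{-cN^{1/10}}$ from the Chernoff bound) so that all of them fit inside the prescribed probability $6\delta^{7/2}$; this works because the last two are superpolynomially small in $N$, so for $\delta$ fixed and $N$ large they are absorbed into the $6\delta^{7/2}$ budget. The only structural ingredient not already available earlier is the independence of the upward edges from the horizontal process, which keeps the Binomial/Chernoff step clean and allows the proof to go through without reference to the more delicate scaling-limit analysis of Section \ref{sec:real}.
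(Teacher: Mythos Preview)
Your proposal is correct and follows essentially the same route as the paper: apply Corollary~\ref{block1} together with Lemma~\ref{lem:cum in J} (union-bounded over the $O(N^{3/10})$ blocks) to guarantee at least $L_1\delta^5\sqrt{N}/32$ visited sites per block, then use a Binomial/Chernoff bound on the independent vertical edges with $C_2>64/(L_1\delta^5)$ to get more than $N^{1/10}$ successful vertical connections per block, absorbing the stretched-exponential errors into the $6\delta^{7/2}$ budget for $N$ large. Your write-up is in fact slightly more explicit than the paper's about the deterministic selection rule and the error bookkeeping, but the argument is the same.
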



\begin{proof}
By Corollary \ref{block1} and Lemma \ref{lem:cum in J}, outside of probability $6\delta^{7/2}$ (for $N$ large enough), we have that for every interval $J=[iN^{-3/10}, (i+1)N^{-3/10})$ contained in $[-r -2 \delta ^{5/2},r+2 \delta ^{5/2} ]$, we have
$$
\sum_{x \in J} \sum_{k= \delta^5N^{2/5}  /2}^{ \delta^5N^{2/5}  } \hat \xi_k (x) \ \geq \  L_1\delta^5 {\sqrt N }/32.
$$

We simply require that the vertical connection constant $C_2$ be greater than $64/(L_1\delta^5)$.
Then by standard tail bounds of $\text{Binomial}\left(L_1\delta^5\sqrt{N}/32,64/(L_1\delta^5)N^{-2/5}\right)$, there exists universal $c>0$ so that 
outside probability $2e^{-cN^{1/10}}N^{3/5}$,
for every such interval $J$, the number of $x \in J$ so that for some $k \in [\delta^5N^{2/5}  /2, \delta^5N^{2/5}  ], \tilde\xi_k(x) =1 $ and $\langle(x,0),(x,1)\rangle$ is open
is greater than $ N^{1/10}$.
\end{proof}

%
Initially, $\tilde{\xi}^0_0$ is $([-1,1],\delta^{5/2},N)$-good. By Proposition \ref{prop:1 step}, with probability $1-6\delta^{7/2}$, $\tilde{\xi}^1_0$ is $([-1-\delta^{5/2},1+\delta^{5/2}],\delta^{5/2},N)$-good. 
We can define recursively $\{\tilde{\xi}^i_k\}_{0\leq k\leq \delta^5N^{2/5}}$ for $0\leq i\leq \delta^{-5/2}$ (from here toward the end, we take $\delta<\delta_0$ and $\delta^{-5/2}\in\mathbb{N}$).  
By FKG inequality, with probability
$$(1-6\delta^{7/2})^{2\delta^{-5/2}}\geq 1-12\delta,$$
$\tilde{\xi}^{2\delta^{-5/2}}_{\delta^5N^{2/5}}$ is $([-3,3],\delta^{5/2},N)$-good. We then split and only consider the particles in two intervals $[-3,-1]$ and $[1,3]$.
We run over two processes $\{\tilde{\xi}^i_k\}_{0\leq k\leq \delta^5N^{2/5}},2\delta^{-5/2}\leq i\leq 4\delta^{-5/2}$ starting from layer $2\delta^{-5/2}$
with initial conditions to be $([-3,-1],\delta^{5/2},N)$-good and $([1,3],\delta^{5/2},N)$-good. Recursively, given $\tilde{\xi}^{2n\delta^{-5/2}}_0$ is $(2m+[-1,1],\delta^{5/2},N)$-good, then outside of probability $12\delta$, $\tilde{\xi}^{2(n+1)\delta^{-5/2}}_0$ is $(2(m+1)+[-1,1],\delta^{5/2},N)$-good and $(2(m-1)+[-1,1],\delta^{5/2},N)$-good. 

Note that the particles from $\tilde{\xi}^{2n\delta^{-5/2}}_0$ with initial conditions $(2(m-1)+[-1,1],\delta^{5/2},N)$-good and  $(2(m+1)+[-1,1],\delta^{5/2},N)$-good will meet in $[-1,1]+2m$ at layer $2(n+1)\delta^{-5/2}$. We will only inherit the particles with lower $m$ index, i.e. the particles from those with initial condition  $(2(m-1)+[-1,1],\delta^{5/2},N)$-good.

Now we can do the renormalization. The renormalizaed regions are defined as $$R_{m,n}=[-4,4]\times [0,2\delta^{-5/2}]+(2m,2n\delta^{-5/2})$$
and $$I_m=[-1,1]+2m.$$
The renormalized site $(m,n)$ corresponds to the block $R_{m,n}$. The random variables $\omega(m,n)\in \{0,1\}$ is to indicate that the renormalized block (site in the renormalized graph) is open or close. $\omega(m,n)=1$ if $\tilde{\xi}^{2n\delta^{-5/2}}_0$ is $(2m+[-1,1],\delta^{5/2},N)$-good in $R_{m,n}$ and we say that $R_{m,n}$ is good. The event that $\omega(m,n)$ is open or not is measurable with respect to the graphical representations in $R_{m,n}$ by the definition of $\{\tilde\xi_k\}_{0\leq k\leq \delta^5N^{2/5}}$ on a certain level. For an edge $e=\langle (m,n),(m+1,n+1) \rangle$ or $e=\langle (m,n),(m-1,n+1) \rangle$, denote $\psi(e)$ as the state of the edge. For $e=\langle (m,n),(m+1,n+1) \rangle$, $\psi(e)=1$ if $(m,n)$ and $(m+1,n+1)$ are open sites in the renormalized graph. The definition of $\psi(e)$ for $e=\langle (m,n),(m-1,n+1) \rangle$ is similar. Let the probability of an edge being open in the renormalized graph be $\P(\psi(e)=1)=1-12\delta$ and $\P(\psi(e)=0)=12\delta$.

Therefore, the renormalized space is $\mathcal{L}_0=\{(m,n)\in \Z^2: m+n \text{ is even }, n\geq 0\}$ and make $\mathcal{L}_0$ into a graph $\mathcal{G}=(\mathcal{V},\mathcal{E})$ by drawing oriented edges from $(m,n)$ to $(m\pm 1,n+1)$.  The percolation process $(\psi(e))_{e\in \mathcal{E}}$ is called $d$-dependent percolation with density $p$ if for a sequence of vertices $v_i=(m_i,n_i), 1\leq i\leq I$ with $\|v_i-v_j\|_{\infty}>d, i\neq j$ connected by a sequence of edges $e_i,1\leq I-1$, $$\P(\psi(e_i)=0,1\leq i\leq I-1)\leq (1-p)^{I-1}.$$

\begin{proposition}
The percolation process $(\psi(e))_{e\in \mathcal{E}}$ is a $1$-dependent oriented percolation with density $1-12\delta$.
\end{proposition}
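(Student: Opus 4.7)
The plan is to verify the two claims embedded in the definition of $1$-dependent oriented percolation with density $1-12\delta$: first, that $\P(\psi(e)=1)\geq 1-12\delta$ for every edge $e$; second, that for any collection of renormalized vertices $v_1,\ldots,v_I$ pairwise at $\infty$-distance strictly greater than $1$, the joint probability $\P(\psi(e_i)=0,\,1\leq i\leq I-1)$ factorizes with product upper bound $(12\delta)^{I-1}$.

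For the density bound, I would fix an edge $e=\langle(m,n),(m\pm1,n+1)\rangle$ and, by translation invariance of the layered model, reduce to the event that starting from a $([-1,1]+2m,\delta^{5/2},N)$-good configuration at the bottom of the block $R_{m,n}$, the $2\delta^{-5/2}$-fold iteration inside $R_{m,n}$ produces a configuration that is both $([-3,-1]+2m,\delta^{5/2},N)$-good and $([1,3]+2m,\delta^{5/2},N)$-good at the top. Proposition \ref{prop:1 step} controls each intermediate iteration's failure probability by $6\delta^{7/2}$, and since every failure event is increasing in the underlying Bernoulli graphical representation, FKG yields
\begin{equation*}
\P(\psi(e)=1)\geq (1-6\delta^{7/2})^{2\delta^{-5/2}}\geq 1-12\delta
\end{equation*}
for $\delta$ small, reproducing the estimate displayed in the paragraph immediately above the proposition.

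For the $1$-dependence, the key structural observation is that $\{\omega(m,n)=1\}$ is measurable with respect to the graphical representation (the horizontal $\mathrm{Bernoulli}(1/(2N))$ edges together with the vertical $\mathrm{Bernoulli}(\kappa N^{-2/5})$ edges) supported inside the box $R_{m,n}$. This follows from the definition of the subordinated process $\tilde\xi^i$: it is explicitly killed outside $[a-1/2,b+1/2]$ at every iteration, so the evolution never consults a Bernoulli variable outside $R_{m,n}$ (the maximal spatial span of the subordination domain over the $2\delta^{-5/2}$ iterations is $[2m-7/2,2m+7/2]\subset[2m-4,2m+4]$, and the temporal span fits exactly in the layer range of $R_{m,n}$). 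For any pair $v_1,v_2$ with $\|v_1-v_2\|_\infty>1$, the block construction is arranged so that $R_{v_1}$ and $R_{v_2}$ do not share the graphical randomness actually consulted by their respective subordinated processes, so $\omega(v_1)$ and $\omega(v_2)$ are independent. Passing this mutual independence through to the edges $\psi(e_i)$, which are functions of the $\omega$'s at their endpoints, and combining with the density bound gives the product estimate $(12\delta)^{I-1}$.

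The main obstacle in my view is the careful bookkeeping of which graphical randomness each block truly consults, so that the $1$-dependence is a genuine consequence of the subordination rather than a modelling artifact. One must verify that the recursive definition of $\tilde\xi^i$ inside $R_{m,n}$ can be carried out on the deterministic template initial condition using only edges internal to $R_{m,n}$, and then show by a monotone (FKG) coupling that the actual realization of the true horizontal process $\hat\xi$ inside $R_{m,n}$ dominates this subordinated template whenever the initial configuration at the bottom is $([-1,1]+2m,\delta^{5/2},N)$-good. This stochastic domination is precisely what allows percolation of the $1$-dependent block system — guaranteed for density $1-12\delta$ above the critical threshold via Durrett's comparison theorem \cite{durrett} for $\delta$ small — to be transferred back to percolation of the original layered model in Theorem \ref{thm:main}.
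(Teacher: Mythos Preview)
Your approach matches the paper's: the density bound via the FKG iteration of Proposition~\ref{prop:1 step} over $2\delta^{-5/2}$ layers, and the finite-range dependence via measurability of $\omega(m,n)$ with respect to the graphical representation restricted to $R_{m,n}$. The paper itself offers no explicit proof beyond these two ingredients, which are established in the paragraphs preceding the proposition.

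There is, however, a gap in your verification of the specific value $d=1$. You correctly compute that the subordination domain inside $R_{m,n}$ grows to $[2m-7/2,\,2m+7/2]$ by the top layer, but then assert that vertices at $\|\cdot\|_\infty$-distance $>1$ consult disjoint randomness. This fails: take $v_1=(m,n)$ and $v_2=(m+2,n)$ (both in $\mathcal{L}_0$, at $\infty$-distance $2>1$). Their subordination domains $[2m-7/2,\,2m+7/2]$ and $[2m+1/2,\,2m+15/2]$ share the interval $[2m+1/2,\,2m+7/2]$ over the upper half of the common layer range, so the corresponding Bernoulli edge variables are shared. By this accounting the process is $3$-dependent rather than $1$-dependent (one needs $|m-m'|\geq 4$, and parity forces even gaps when $n=n'$). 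The paper's own statement of $d=1$ is not accompanied by a block-overlap calculation either, so this is a loose end in the source as well; it does not threaten the main theorem, since Durrett's comparison theorem applies for any finite $d$ with an adjusted threshold on $\delta$, but your sentence ``do not share the graphical randomness actually consulted'' needs either a sharper subordination bound or a relaxation to $d=3$.
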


The initial condition is $\omega(0,0)=1$. By using the comparison argument Theorem 4.3 in \cite{durrett}, we have the following result.

\begin{theorem}
If there exists a percolation in the renormalized space $\mathcal{L}_0$ just defined, then there is a percolation in our anisotropic percolation process.
\end{theorem}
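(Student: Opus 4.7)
The plan is to lift an infinite open oriented path in $\mathcal{L}_0$ issuing from $(0,0)$ to an infinite open path in the anisotropic percolation graph issuing from $\mathbf{0}$, using the explicit graphical construction that defines the block openness events $\omega(m,n)$. Once this path‑translation is established, any positive probability for an infinite open oriented cluster at $(0,0)$ in $\mathcal{L}_0$ immediately produces an infinite open cluster at $\mathbf{0}$ in the original model, which is what percolation means.

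First I would condition on an infinite open oriented path $(0,0) = (m_0, 0), (m_1, 1), (m_2, 2), \dots$ in $\mathcal{L}_0$, with $|m_k - m_{k-1}| = 1$ and $\omega(m_k, k) = 1$ for every $k \geq 0$. By definition of the renormalization, this means that for each $k$ the subordinated process $\tilde\xi^{2k\delta^{-5/2}}_0$ is $(2m_k + [-1,1], \delta^{5/2}, N)$-good in the block $R_{m_k, k}$. Next I would apply Proposition \ref{prop:1 step} iteratively across the $\delta^{-5/2}$ vertical sub‑layers that make up one renormalized step from $(m_{k-1}, k-1)$ to $(m_k, k)$. Proposition \ref{prop:1 step} provides, for every occupied site of $\tilde\xi^{i+1}_0$, an open vertical bond from an occupied site of $\tilde\xi^i$ at some horizontal time in $[\delta^5 N^{2/5}/2, \delta^5 N^{2/5}]$, together with the horizontal history of that site inside layer $i$ of the true process. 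Concatenating these local connections produces, for each $k \geq 1$, a concrete open path in the anisotropic percolation graph joining a site of the good region in $R_{m_{k-1}, k-1}$ at its base to a site of the good region in $R_{m_k, k}$ at its base.

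The point that the horizontal edges used in the construction are genuinely open in the anisotropic percolation is free: the subordinated process $\tilde\xi$ is obtained from $\hat\xi$ by \emph{killing} particles outside a prescribed spatial strip, so it uses a subset of the horizontal edges of $\hat\xi$, which itself uses a subset of the open edges of the underlying independent bond percolation; no new edges are ever invoked. Chaining the paths produced across all $k \geq 1$ therefore yields a genuine infinite open path in the anisotropic percolation starting from $\mathbf{0}$. The main obstacle is bookkeeping rather than probability: one has to check that consecutive path segments can be concatenated, i.e.\ that the endpoint of the $k$-th segment at the base of $R_{m_k,k}$ is actually a starting point of the $(k{+}1)$-st segment produced by the next renormalized step. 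This is built into the construction, since the openness of $(m_k, k)$ is defined precisely through the configuration that serves as the initial condition for the next subordinated process, and since the disjointness of the space‑time blocks $R_{m,n}$ (up to their boundaries where goodness is defined) guarantees that the vertical edges invoked at different renormalized steps are distinct and hence that no edge is reused.
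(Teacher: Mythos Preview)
Your argument is correct and is essentially an explicit unpacking of what the paper does implicitly: the paper simply invokes the comparison argument (Theorem~4.3 of \cite{durrett}) to conclude, whereas you spell out how an infinite open oriented path in $\mathcal{L}_0$ lifts, via Proposition~\ref{prop:1 step} applied layer by layer, to an infinite open path in the anisotropic graph. The mechanism you describe---each occupied site of $\tilde\xi^{i+1}_0$ comes with an open vertical edge down to a site of $\tilde\xi^i_k$, which in turn is connected by open horizontal edges inside layer $i$ back to $\tilde\xi^i_0$---is exactly the content of the block construction, and your observation that $\tilde\xi$ only uses a subset of the open edges of $\hat\xi$ (hence of the underlying percolation) is the right reason the lifted path is genuine.

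One small correction: the infinite path you build does not start at $\mathbf{0}$ but at some site of the initial $([-1,1],\delta^{5/2},N)$-good configuration on layer~$0$. This is harmless for the stated conclusion, since ``there is percolation'' means the existence of an infinite cluster, and by translation invariance and ergodicity a positive probability of an infinite cluster touching that configuration already gives percolation; but your sentence ``starting from $\mathbf{0}$'' overstates what the construction delivers.
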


The theorem of existence of percolation for $d$-dependent oriented percolation (Theorem 4.1 in \cite{durrett}) shows that if $12\delta<6^{-4\cdot 9}$, there is a percolation. 
\begin{remark}
Figure \ref{fig:percolation graph} shows this renormalization construction.
\end{remark}

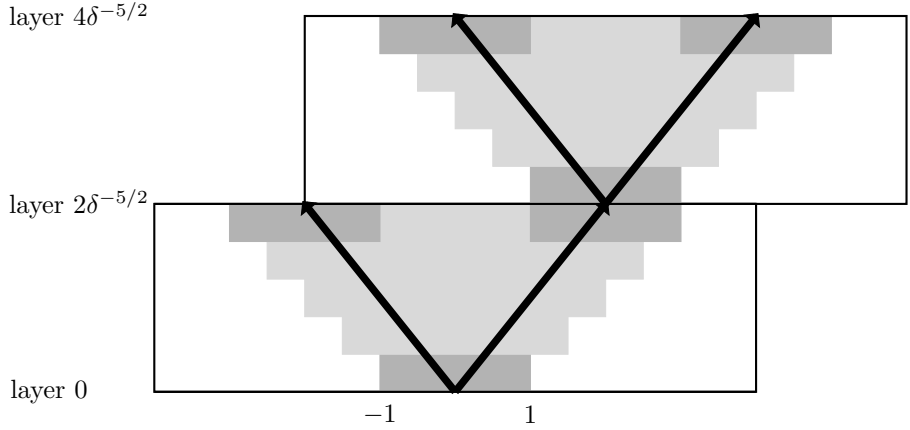
\begin{figure}
\begin{center}
\begin{tikzpicture}
\draw [thick] (-4,0) rectangle (4,2.5);
\node at (-1,-0.3) {$-1$};
\node at (1,-0.3) {$1$};
\draw [fill=gray!60, gray!60] (-1,0) rectangle (1,0.5);
\draw [thick, black]  (-4,0) -- (4,0);
\node at (-5.4,0) {layer $0$};
\draw [fill=gray!30, gray!30] (-1.5,0.5) rectangle (1.5,1);
\draw [fill=gray!30, gray!30] (-2,1) rectangle (2,1.5);
\draw [fill=gray!30, gray!30] (-2.5,1.5) rectangle (2.5,2);
\draw [fill=gray!30, gray!30] (-3,2) rectangle (3,2.5);
\draw [fill=gray!60, gray!60] (1,2) rectangle (3,2.5);
\draw [fill=gray!60, gray!60] (-3,2) rectangle (-1,2.5);
\draw [thick, black] (-4,2.5) -- (4,2.5);
\node at (-5,2.5) {layer $2\delta^{-5/2}$};
\path[draw=black,solid,line width=1mm,fill=black,
preaction={-triangle 90,thin,draw,shorten >=-0.5mm}
]  (0,0) -- (2,2.5);
\path[draw=black,solid,line width=1mm,fill=black,
preaction={-triangle 90,thin,draw,shorten >=-0.5mm}
]  (0,0) -- (-2,2.5);
\draw [fill=gray!60, gray!60] (1,2.5) rectangle (3,3);
\draw [fill=gray!30, gray!30] (0.5,3) rectangle (3.5,3.5);
\draw [fill=gray!30, gray!30] (0,3.5) rectangle (4,4);
\draw [fill=gray!30, gray!30] (-0.5,4) rectangle (4.5,4.5);
\draw [fill=gray!30, gray!30] (-1,4.5) rectangle (5,5);
\draw [fill=gray!60, gray!60] (-1,4.5) rectangle (1,5);
\draw [fill=gray!60, gray!60] (3, 4.5) rectangle (5,5);
\path[draw=black,solid,line width=1mm,fill=black,
preaction={-triangle 90,thin,draw,shorten >=-0.5mm}
]  (2,2.5) -- (4,5);
\path[draw=black,solid,line width=1mm,fill=black,
preaction={-triangle 90,thin,draw,shorten >=-0.5mm}
]  (2,2.5) -- (0,5);
\draw [thick] (-2,2.5) rectangle (6,5);
\node at (-5,5) {layer $4\delta^{-5/2}$};
\end{tikzpicture}
\end{center}
\caption{Oriented percolation construction \label{fig:percolation graph}}
\end{figure}
\newpage
\appendix
\section{Estimations for showing tightness}
\label{appendix}
First, we need some bounds on the distribution function of $S_t$. Recall that $S_t=\sum_{i=1}^t Y_i$, with $(Y_i)$ i.i.d. uniformly distributed on $\{i/N^{1+\alpha},|i|\leq N\}$ and $p(t,x)$ is the transition probability of the standard Brownian motion.
\begin{lemma}
\label{lem: diff}
There exists $m$, such that for $N\geq m$ and any $t\in \mathbb{N}$,
\begin{align}\label{diff}
\left|N^{1+\alpha}\P(S_t=y)-p\left(\frac{c_3t}{3N^{2\alpha}},y\right)\right| \leq CN^{\alpha}t^{-\frac{3}{2}},
\end{align}
where $c_3$ is the constant in Section \ref{green fun} that tends to $1$ as $N\rightarrow\infty$.
\end{lemma}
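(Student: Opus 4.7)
The plan is to prove a local central limit theorem (with explicit error) for $S_t$ by Fourier inversion, exploiting that $S_t$ takes values in the lattice $\Z/N^{1+\alpha}$ and that $Y_1$ is symmetric. Write the two objects on the left-hand side of \eqref{diff} as oscillatory integrals: with $\hat\phi(\xi)=\E[e^{i\xi Y_1}]$,
\begin{align*}
N^{1+\alpha}\P(S_t=y) &= \frac{1}{2\pi}\int_{-\pi N^{1+\alpha}}^{\pi N^{1+\alpha}}\hat\phi(\xi)^{\,t}\,e^{-i\xi y}\,d\xi,\\
p\!\left(\tfrac{c_3 t}{3N^{2\alpha}},y\right) &= \frac{1}{2\pi}\int_{\R} e^{-c_3 t\xi^2/(6N^{2\alpha})}\,e^{-i\xi y}\,d\xi,
\end{align*}
and estimate the difference region by region.

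First I would compute $\hat\phi$: by symmetry of the uniform law on $\{i/N^{1+\alpha}:|i|\le N\}$, all odd moments of $Y_1$ vanish, and the identities $\E[Y_1^2]=c_3/(3N^{2\alpha})$, $\E[Y_1^4]=c_4/(5N^{4\alpha})$ stated in Section \ref{green fun} give the Taylor expansion
$$\hat\phi(\xi)=1-\frac{c_3\xi^2}{6N^{2\alpha}}+O\!\left(\frac{\xi^4}{N^{4\alpha}}\right)\quad\text{for }|\xi|\le \delta N^{\alpha},$$
with $\delta>0$ small but fixed. Taking logarithms and exponentiating $t$ times yields
$$\hat\phi(\xi)^{t}=\exp\!\left(-\frac{c_3 t\xi^2}{6N^{2\alpha}}\right)\Bigl(1+O\!\bigl(t\xi^4/N^{4\alpha}\bigr)\Bigr),$$
valid as long as $t\xi^4/N^{4\alpha}$ stays bounded, which is the case for $|\xi|\le \delta N^{\alpha}$ uniformly in $t\ge 1$.

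The main step, and the one carrying the claimed rate, is the estimate of the small-$\xi$ contribution. After substituting $\eta=\xi\sqrt{t}/N^{\alpha}$, the error integral becomes
$$\int_{|\xi|\le \delta N^{\alpha}}\!e^{-c_3 t\xi^2/(6N^{2\alpha})}\cdot\frac{t\xi^4}{N^{4\alpha}}\,d\xi\;\lesssim\;\frac{N^{\alpha}}{t^{3/2}}\int_{\R}e^{-c\eta^2}\eta^4\,d\eta\;\lesssim\;\frac{N^{\alpha}}{t^{3/2}},$$
which is exactly the required bound. The reason odd-order Taylor terms do not spoil the rate is the symmetry of $Y_1$; without it one would only obtain $t^{-1}$.

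The remaining regions are routine tail bounds. For moderate frequencies $\delta N^{\alpha}\le|\xi|\le \pi N^{1+\alpha}$, standard arguments (using that $\hat\phi$ is the characteristic function of a non-degenerate lattice variable whose support generates $\Z/N^{1+\alpha}$) give $|\hat\phi(\xi)|\le 1-c$ uniformly on this range for a constant $c>0$ depending only on $\delta$, so $|\hat\phi(\xi)|^t\le e^{-ct}$ and the resulting integral is $O(N^{1+\alpha}e^{-ct})$, which is dominated by $N^{\alpha}t^{-3/2}$ for $t\ge 1$. For $|\xi|\ge \pi N^{1+\alpha}$ the Gaussian $e^{-c_3 t\xi^2/(6N^{2\alpha})}$ decays like $e^{-cN^2 t}$ and contributes a negligible amount. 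The hardest point in practice is the uniform lower bound on $1-|\hat\phi(\xi)|$ on the moderate-frequency range, which one handles by checking it separately on $[\delta N^{\alpha},\pi N^{1+\alpha}-\delta]$ (where a continuum argument using the variance suffices) and near the endpoint $\xi=\pi N^{1+\alpha}$ (where the lattice structure is used); combining the three regimes yields \eqref{diff}.
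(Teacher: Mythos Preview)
Your approach matches the paper's almost exactly: Fourier inversion, Taylor expansion of $\hat\phi$ in a low-frequency window $|\xi|\le\delta N^\alpha$, and separate tail bounds outside (the paper cites a Bhattacharya--Rao Edgeworth bound for the low-frequency estimate where you argue directly, but the output is the same $N^\alpha t^{-3/2}$). Two steps in your write-up, however, do not hold as stated. First, the claim that $t\xi^4/N^{4\alpha}$ ``stays bounded for $|\xi|\le\delta N^\alpha$ uniformly in $t\ge1$'' is false: at $|\xi|=\delta N^\alpha$ it equals $\delta^4 t$. The correct argument keeps the $O(t\xi^4/N^{4\alpha})$ correction in the exponent, notes that for $|\xi|\le\delta N^\alpha$ with $\delta$ small it is at most half of $c_3 t\xi^2/(6N^{2\alpha})$, and then uses $|e^a-e^b|\le|a-b|e^{\max(a,b)}$ before integrating; this still yields $N^\alpha t^{-3/2}$.

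More seriously, the assertion that the moderate-frequency contribution $O(N^{1+\alpha}e^{-ct})$ is ``dominated by $N^\alpha t^{-3/2}$ for $t\ge1$'' is simply wrong: at $t=1$ the former is of order $N^{1+\alpha}$, a full factor of $N$ too large. The paper does not claim this; instead it supplements the Fourier estimate $C(N^{1+\alpha}e^{-t/24}+N^\alpha t^{-3/2})$ with the trivial a priori bound on the left-hand side of \eqref{diff}, namely $N^{1+\alpha}\P(S_t=y)\le N^{1+\alpha}\sup_z\P(Y_1=z)\le CN^\alpha$ together with $p(c_3t/(3N^{2\alpha}),y)\le CN^\alpha t^{-1/2}$, and uses this to absorb the $N^{1+\alpha}e^{-t/24}$ term. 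You are missing this ingredient entirely.
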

The proof follows from \cite{mueller,normal approx} and the inversion formula of characteristic function in \cite{durrettbook}.
\begin{proof}
$\E \e^{iuS_t}=\rho^t(u)$, where $\rho(u)=\E \e^{iuY}$.
\begin{align*}
\rho(u) &= \E \e^{iuY}\\
 &=\frac{1}{2N} \left(\sum_{k=1}^N \e^{\frac{iuk}{N^{1+\alpha}}}+\e^{\frac{-iuk}{N^{1+\alpha}}}\right)\\
 &=\frac{1}{N}\left(\sum_{k=1}^N \cos\left(\frac{uk}{N^{1+\alpha}}\right)\right)\\
 &= 1-\frac{c_3u^2}{2!\cdot 3 N^{2\alpha}}+ \frac{c_4u^4}{4!\cdot 5 N^{4\alpha}}r, ~|r|\leq 1
\end{align*}
This directly gives that for $u\leq N^{\alpha}/2$,
$$|\rho(u)|\leq \exp\left(-\frac{c_3u^2}{12N^{2\alpha}}\right),$$
and for $u\geq N^{\alpha}/2$, $|\rho(u)|\leq 23/24$.

Moreover, with the help of Theorem 8.5 of \cite{normal approx}, for $u\leq N^{\alpha}/2$,
\begin{align*}
\left|\rho^t(u)-\exp\left(-\frac{c_3tu^2}{6N^{2\alpha}}\right)\right|\leq Ct^{-1}\exp\left(-\frac{c_3tu^2}{6N^{2\alpha}}\right).
\end{align*}

Follow the inversion formula \cite{durrettbook},
$$N^{1+\alpha}\P(S_t=y)=\frac{1}{2\pi}\int_{-\pi N^{1+\alpha}}^{\pi N^{1+\alpha}}\e^{iuy}\rho^t(u)du,$$
$$p\left(\frac{c_3t}{3N^{2\alpha}},y\right)=\frac{1}{2\pi}\int \e^{iuy}\e^{-\frac{tu^2}{6N^{2\alpha}}}du.$$
The difference satisfies
\begin{align*}
|N^{1+\alpha}\P(S_t=y)-p\left(\frac{c_3t}{3N^{2\alpha}},y\right)| &\leq \frac{1}{\pi}\int_{\pi N^{1+\alpha}}^{\infty}\e^{-\frac{c_3tu^2}{6N^{2\alpha}}}du+\frac{1}{\pi}\int_0^{\pi N^{1+\alpha}}|\rho^t(u)-\e^{-\frac{c_3tu^2}{6N^{2\alpha}}}|du\\
&\leq \frac{1}{\pi}\int_{\pi N^{1+\alpha}}^{\infty}\e^{-\frac{c_3tu^2}{6N^{2\alpha}}}du+\frac{1}{\pi}\int_{N^{\alpha}/2}^{\pi N^{1+\alpha}}|\rho^t(u)|+\e^{-\frac{c_3tu^2}{6N^{2\alpha}}}du\\
&~~~~+\frac{1}{\pi}\int_0^{N^{\alpha}/2} \left|\rho^t(u)-\e^{-\frac{c_3tu^2}{6N^{2\alpha}}}\right| du\\
&\leq \frac{1}{\pi}\int_{N^{\alpha}/2}^{\infty}\e^{-\frac{c_3tu^2}{6N^{2\alpha}}}du+N^{1+\alpha}\left( \frac{23}{24} \right)^t+\frac{1}{\pi t}\int_0^{N^{\alpha}/2}\e^{-\frac{c_3tu^2}{6N^{2\alpha}}}du\\
&\leq Ct^{-1}N^{\alpha}e^{-\frac{c_3t}{24}}+N^{1+\alpha}\e^{-\frac{t}{24}}+CN^{\alpha}t^{-\frac{3}{2}}.
\end{align*}
Therefore, we get the bound
\begin{align*}
\left|N^{1+\alpha}\P(S_t=y)-p\left(\frac{c_3t}{3N^{2\alpha}},y\right)\right| \leq C(N^{1+\alpha}\e^{-\frac{t}{24}}+N^{\alpha}t^{-\frac{3}{2}}).
\end{align*}
Because of (a) in the next lemma and $p(t,x)\leq t^{-1/2}$, we have
\begin{align*}
\left|N^{1+\alpha}\P(S_t=y)-p\left(\frac{c_3t}{3N^{2\alpha}},y\right)\right| \leq CN^{\alpha}t^{-\frac{3}{2}}.
\end{align*}
\end{proof}
With the help of Lemma \ref{lem: diff}, we can get the estimations on $\psi_n$.

\begin{lemma}
\label{psi}
We have the following estimates on $\psi_n^z$:
\begin{enumerate}[label=(\alph*)]
\item $(\psi_k^z,1)=1$, $\|\psi_k^z\|_0\leq CN^{\alpha}, \forall k\geq 0$.
\item $(\e_{\lambda},\psi_{\lfloor tN^{2\alpha}\rfloor}^z)\leq C(\lambda,T)\e_{\lambda}(z)$ for $0\leq t\leq T$.
\item  $\|\psi_{k}^z\|_{\lambda}\leq C(\lambda)\e_{\lambda}(z)N^{\alpha}k^{-\frac{1}{2}}$.
\item For $|x-y|\leq 1$,\\ $\| \psi_k^x-\psi_k^y\|_{\lambda} \leq C(\lambda)\e^{\frac{C(\lambda)k}{N^{2\alpha}}}\e_{\lambda}(x)\left(|x-y|^{\frac{1}{2}}k^{-\frac{1}{2}}N^{\alpha}+N^{\frac{\alpha}{2}}k^{-\frac{3}{4}}\right)$.
\item $\| \psi_k^y-\psi_l^y\|_{\lambda} \leq C(\lambda)\e^{\frac{C(\lambda)k}{N^{2\alpha}}}\e_{\lambda}(y)N^{\frac{\alpha}{2}} \left( |k-l|^{\frac{1}{2}} l^{-\frac{3}{4}}+k^{-\frac{3}{4}}\right)$.
\end{enumerate}
\end{lemma}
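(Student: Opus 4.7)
My plan is to derive all five estimates from the representation $\psi_k^z(x) = N^{1+\alpha}\mathbb{P}(S_{k+1}=x-z)$, Lemma \ref{lem: diff} (local CLT), and standard Gaussian heat kernel bounds, following the Mueller--Tribe strategy. The exponential factor $e^{C(\lambda)k/N^{2\alpha}}$ in (d) and (e) will come from exponential tilting, which is the unifying tool throughout.

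For (a), the identity $(\psi_k^z,1)=\sum_y \mathbb{P}(S_{k+1}=y)=1$ is immediate, and the sup-norm bound follows by combining Lemma \ref{lem: diff} with the trivial Gaussian bound $p(t,y)\leq Ct^{-1/2}$ at $t_k=c_3(k+1)/(3N^{2\alpha})$, yielding $\|\psi_k^z\|_0\leq CN^\alpha k^{-1/2}+CN^\alpha k^{-3/2}\leq CN^\alpha$. For (b), I would write $(e_\lambda,\psi_n^z)=\mathbb{E}[e^{\lambda|z+S_{n+1}|}]\leq e^{\lambda|z|}(\mathbb{E}[e^{\lambda S_{n+1}}]+\mathbb{E}[e^{-\lambda S_{n+1}}])$ and use the MGF computation $M(\pm\lambda)=\mathbb{E}[e^{\pm\lambda Y}]=1+\lambda^2/(6N^{2\alpha})+O(N^{-4\alpha})$ from the proof of Lemma \ref{lem: diff}, which gives $M(\pm\lambda)^{n+1}\leq \exp(C(\lambda)(n+1)/N^{2\alpha})\leq C(\lambda,T)$ for $n\leq TN^{2\alpha}$.

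For (c), the key step is exponential tilting: letting $\tilde Y$ have law proportional to $e^{\lambda y}\,dP_Y(y)$ and $\tilde S_n=\sum_{i=1}^n\tilde Y_i$, one has $N^{1+\alpha}\mathbb{P}(S_{k+1}=y)e^{\lambda y}=M(\lambda)^{k+1}\cdot N^{1+\alpha}\mathbb{P}(\tilde S_{k+1}=y)$. A local CLT for $\tilde S$ (same proof as Lemma \ref{lem: diff}, applied to the tilted law which still has bounded variance) gives $N^{1+\alpha}\mathbb{P}(\tilde S_{k+1}=y)\leq CN^\alpha k^{-1/2}$, and $M(\lambda)^{k+1}\leq e^{C(\lambda)k/N^{2\alpha}}$, so combining with $e^{\lambda|v|}\leq e^{\lambda|z|}(e^{\lambda(v-z)}+e^{-\lambda(v-z)})$ yields the stated bound (the exponential in $k/N^{2\alpha}$ is absorbed into $C(\lambda)$ through the implicit $T$-dependence).

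For (d) and (e), I would decompose via Lemma \ref{lem: diff}: $\psi_k^x(v)-\psi_k^y(v)=[p(t_k,v-x)-p(t_k,v-y)]+[R_k(v-x)-R_k(v-y)]$ with $|R_k|\leq CN^\alpha k^{-3/2}$ uniformly, and likewise in time. The weighted Brownian difference is controlled by a standard interpolation: the derivative bound $|\partial_a p(t,a)|e^{\lambda|a|}\leq C(\lambda)t^{-1/2}e^{\lambda^2 t/2}$ (optimizing $|a|e^{\lambda|a|}p(t,a)$ in $a$) gives $|p(t,a)-p(t,b)|e^{\lambda|a|}\leq C(\lambda)e^{\lambda^2 t/2}|a-b|\,t^{-1/2}$; geometric-mean interpolation with the trivial bound $C(\lambda)e^{\lambda^2 t/2}t^{-1/2}$ yields the Hölder-$1/2$ form with the prefactor $t_k^{-1/2}=CN^\alpha k^{-1/2}$ appearing in (d). For (e), the analogous interpolation uses $|\partial_\tau p(\tau,a)|e^{\lambda|a|}\leq C(\lambda)\tau^{-3/2}e^{\lambda^2\tau/2}$ and produces the $|k-l|^{1/2}$ time-Hölder with the residual $k^{-3/4}$ coming from the discrete error $R_k-R_l$ after the same tilting control of the weighted tails.

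The main obstacle will be the exponentially weighted control of the discrete error $R_k$: Lemma \ref{lem: diff} gives only a uniform-in-$v$ bound, which after multiplying by $e^{\lambda|v|}$ is too large near infinity. The remedy is to bound $\|R_k\|_\lambda$ using the tilted local CLT separately for $\psi_k^z$ and for $p(t_k,\cdot)$ rather than on their difference, so that both have Gaussian tails controlled by $e^{C(\lambda)k/N^{2\alpha}}N^\alpha k^{-1/2}$, and then use this only when the ``continuous'' Hölder estimate is unavailable (i.e., for the residual term producing $N^{\alpha/2}k^{-3/4}$ in (d) and $N^{\alpha/2}k^{-3/4}$ in (e)). The precise bookkeeping of the $N^\alpha$ and $k$-exponents in these residuals is delicate but routine once this two-term decomposition is in place.
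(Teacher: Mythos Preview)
Your treatment of (a), (b), and (c) is correct and essentially matches the paper; in fact your tilting argument for (c) is cleaner than what the paper writes, since the paper's sketch for (c) only records the uniform bound from Lemma~\ref{lem: diff} and leaves the weighted sup implicit.

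For (d) and (e), however, there is a real gap. First, your weighted derivative bound is off by a power: one has
\[
\sup_a |\partial_a p(t,a)|\,e^{\lambda|a|}=\sup_a \frac{|a|}{t}\,p(t,a)\,e^{\lambda|a|}\le C(\lambda)\,e^{\lambda^2 t/2}\,t^{-1},
\]
not $t^{-1/2}$, in the regime $t=t_k\asymp k/N^{2\alpha}$ relevant here. Hence the weighted Lipschitz estimate is $|x-y|\,t_k^{-1}$, and your geometric-mean interpolation with the trivial bound $t_k^{-1/2}$ yields $|x-y|^{1/2}t_k^{-3/4}=|x-y|^{1/2}N^{3\alpha/2}k^{-3/4}$, which is strictly larger than the $|x-y|^{1/2}N^\alpha k^{-1/2}$ claimed in (d). Second, your remedy for the residual $R_k$ --- bounding $\|R_k\|_\lambda$ by the tilted local CLT applied to $\psi_k$ and $p(t_k,\cdot)$ separately --- only gives $\|R_k\|_\lambda\le Ce_\lambda(z)N^\alpha k^{-1/2}$, again too large to produce the $N^{\alpha/2}k^{-3/4}$ term.

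The mechanism that the paper actually uses (and that you are missing) is a near--far splitting combined with a $\min$ trick, not interpolation between two $\|\cdot\|_\lambda$ bounds. From the exponential Markov inequality one gets the tail estimate $\psi_k^x(v)+\psi_k^y(v)\le e^{-2\lambda|v-x|}e^{C(\lambda)k/N^{2\alpha}}$ for $|v-x|\ge 2$. Combining this with the sup-norm bound $A:=\|\psi_k^x-\psi_k^y\|_0$ via
\[
\sup_v \min\bigl(A,\,e^{C(\lambda)k/N^{2\alpha}}e^{-2\lambda|v-x|}\bigr)\,e^{\lambda|v|}\le C(\lambda)\,e_\lambda(x)\,e^{C(\lambda)k/N^{2\alpha}}\bigl(A+A^{1/2}\bigr)
\]
produces a \emph{square root} of the unweighted sup-norm. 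Since Lemma~\ref{lem: diff} gives $A\le C(|x-y|N^{2\alpha}k^{-1}+N^\alpha k^{-3/2})$, taking square roots yields both terms of (d) simultaneously. The same device, with the time-difference bound $\|\psi_k^y-\psi_l^y\|_0\le C(|k-l|l^{-3/2}N^\alpha+N^\alpha k^{-3/2})$, gives (e). Your decomposition $\psi=p+R$ and separate treatment of the two pieces cannot recover this square-root gain.
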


\begin{proof}
(a)
$$(\psi_t^z,1)=\frac{1}{N^{1+\alpha}}\sum_x \frac{N^{\alpha}}{2}\indic(x\sim z)=\frac{1}{2N}\sum_x \indic(x\sim z)=1.$$
The second statement is because $\P(X_{t}=y)\leq c/N$ for any $y$.

(b)
 \begin{align*}
(\e_{\lambda},\psi_t^z) &= \frac{1}{N^{1+\alpha}}\sum_x \e_{\lambda}(x)\psi_t^z(x)\\
 &=\sum_x \e_{\lambda}(x)\P(S_{t+1}=x-z)\\
 &\leq 2\e_{\lambda}(z)\sum_x \e^{\lambda x}P(S_{t+1}=x)\\
 &\leq 2\e_{\lambda}(z) (\E \e^{\lambda Y})^{t+1}\\
 &\leq 2\e_{\lambda}(z) \left( 1+\frac{\lambda^2}{3N^{2\alpha}} \right)^{t+1}\\
 &\leq 2\e_{\lambda}(z) \exp\left(\frac{\lambda^2(t+1)}{3N^{2\alpha}}\right).
\end{align*}

(c)
 By (\refeq{diff}) and $p(k,y)\leq Ck^{-1/2}$, we have
\begin{align*}
N^{1+\alpha}\P(S_k=y) &\leq C\left(N^{\alpha}k^{-\frac{1}{2}}+N^{\alpha}k^{-\frac{3}{2}}\right),
\end{align*}
then,
\begin{align*}
\psi_k^0(y)\leq C\left(N^{\alpha}(k+1)^{-\frac{1}{2}}+N^{\alpha}(k+1)^{-\frac{3}{2}}\right).
\end{align*}
Therefore
\begin{align*}
\|\psi_{k}^z\|_{\lambda}\leq C(\lambda,T)\e_{\lambda}(z)N^{\alpha} k^{-\frac{1}{2}}.
\end{align*}

(d)
 For $|x|\geq 1$,
\begin{align*}
\P(S_k=x) &\leq N^{-(1+\alpha)}\P(S_k\geq |x|-1)\\
&\leq N^{-(1+\alpha)}\exp(-u(|x|-1))\E\exp(uX_k)\\
&\leq N^{-(1+\alpha)}\exp(-u(|x|-1))\exp\left(\frac{u^2k}{6N^{2\alpha}}\right).
\end{align*}
Hence, for $|x-z|\geq 1$,
\begin{align*}
\psi^x_k(z)\leq \exp(-u|x-z|)\exp\left(\frac{u^2k}{6N^{2\alpha}}\right).
\end{align*}
This gives that for $|x-z|\geq 2$,
\begin{align*}
\psi_k^x(z)+\psi_k^y(z)\leq \exp(-2 \lambda|x-z|)\exp\left(\frac{2\lambda^2k}{3N^{2\alpha}}\right).
\end{align*}
From (\refeq{diff}) and $|p(t,x)-p(t,y)|\leq Ct^{-1}|x-y|$, we have
\begin{align*}
\| \psi_k^x-\psi_k^y\|_0 \leq C\left( |x-y|N^{2\alpha}k^{-1}+N^{\alpha}k^{-\frac{3}{2}} \right).
\end{align*}
So,
\begin{align*}
\| \psi_k^x-\psi_k^y\|_{\lambda} &\leq \sup_{|x-z|<2}C(\lambda)\| \psi_k^x-\psi_k^y\|_0\e_{\lambda}(z)\\
 &~~~~+\sup_{|x-z|\geq 2}\min \left(\| \psi_k^x-\psi_k^y\|_0,\e^{\frac{C(\lambda)k}{N^{2\alpha}}}\exp(-2\lambda|x-z|)\right)\e_{\lambda}(z)\\
 &\leq C(\lambda)\e^{\frac{C(\lambda)k}{N^{2\alpha}}} \e_{\lambda}(x)\left(\| \psi_k^x-\psi_k^y\|_0+\| \psi_k^x-\psi_k^y\|_0^{\frac{1}{2}}\right)\\
 &\leq C(\lambda)\e^{C(\lambda)k/N^{2\alpha}}\e_{\lambda}(x)\left(|x-y|^{\frac{1}{2}}k^{-\frac{1}{2}}N^{\alpha}+N^{\frac{\alpha}{2}}k^{-\frac{3}{4}}\right).
\end{align*}

(e)
 By using $|p(t,y)-p(s,y)|\leq C|t-s|s^{-3/2}$ and (\refeq{diff}), we have
\begin{align*}
\| \psi_k^y-\psi_l^y\|_0 \leq C\left(|k-l|l^{-\frac{3}{2}}N^{\alpha}+N^{\alpha}k^{-\frac{3}{2}}+N^{\alpha}l^{-\frac{3}{2}}\right).
\end{align*}
Similarly as the argument in (d), we can get
\begin{align*}
\| \psi_k^y-\psi_l^y\|_{\lambda} \leq C(\lambda)\e^{\frac{C(\lambda)k}{N^{2\alpha}}}\e_{\lambda}(y)N^{\frac{\alpha}{2}} \left( |k-l|^{\frac{1}{2}} l^{-\frac{3}{4}}+k^{-\frac{3}{4}}\right).
\end{align*}

\end{proof}

Recall the Burkholder-Davis-Gundy (BDG) inequality for discrete martingale \cite{bdg}:
\begin{align*}
\E(\sup_{1\leq i\leq t}|M_i|^p)\leq C(p)\E\langle M\rangle_t^{\frac{p}{2}},
\end{align*}
where $\langle M\rangle_t=\sum_{k=1}^t\E_{k-1} (d_k^2),d_k=M_k-M_{k-1}$ and $1<p<\infty$. The notation $\E_{k-1}(\cdot)$ means conditional expectation given $\F_{k-1}$.
We have the following moment estimations.
\begin{lemma}
\label{moment}
Suppose the initial condition $A(\xi_0)$ whose linear interpolation converges in $\mathcal{C}$ to $f$ that is continuous and compact supported, then for $T\geq 0$, $p\geq 2$, $\lambda>0$
\begin{enumerate}[label=(\alph*)]
\item $\E \Big( \sup_{k\leq \lfloor tN^{2\alpha}\rfloor}(\nu_k^N,\e_{-\lambda})^p \Big)\leq C(\lambda,p,f,T).$
\item $(\nu_0^N,\psi_t^z)^p\leq C(\lambda,p,f)\e_{\lambda p}(z)$.
\item $\|\E(A^p(\xi_{\lfloor tN^{2\alpha}\rfloor}))\|_{-\lambda p}\leq C(\lambda,p,f,T)$ for $t\leq T$.
\end{enumerate}
\end{lemma}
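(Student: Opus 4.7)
The three estimates should be proven in the order (b), (c), (a), adapting the template of Mueller and Tribe \cite{mueller}. The main tools are the Green's function representation (\ref{approx}), the martingale quadratic variation bound (\ref{bracket}), the kernel estimates of Lemma \ref{psi}, and a $p$-by-$p$ induction that propagates pointwise moment bounds.

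Part (b) is essentially deterministic. Since $A(\xi_0)$ converges in $\mathcal{C}$ to a compactly supported continuous $f$, the measure $\nu_0^N$ has uniformly bounded total mass with support in a fixed compact set independent of $N$. Using Lemma \ref{measure density} to relate $(\nu_0^N, \psi_t^z)$ to $(A\xi_0, \psi_t^z)$ up to negligible amplitude error, together with the normalization $(\psi_t^z,1)=1$ from Lemma \ref{psi}(a) and the Gaussian tail bound derived inside the proof of Lemma \ref{psi}(d), one obtains $(\nu_0^N, \psi_t^z)\leq C(\lambda,f)$ uniformly in $t \leq TN^{2\alpha}$ and $N$, with exponential decay in $|z|$ away from the support of $f$. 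The bound $(\nu_0^N, \psi_t^z)^p \leq C(\lambda,p,f)\e_{\lambda p}(z)$ then follows trivially since $\e_{\lambda p}(z)\geq 1$.

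For (c), I would proceed by induction on $p\geq 1$. The base case $p=1$ follows from the martingale identity $\E A(\xi_n)(x) = (\nu_0^N,\psi_n^x)$ combined with (b). For the induction step, (\ref{approx}) and BDG give
\[
\E A(\xi_n)(x)^p \leq C_p\bigl[(\nu_0^N,\psi_n^x)^p + \E\langle M(\psi_{n-\cdot}^x)\rangle_n^{p/2}\bigr].
\]
The $\lambda$-weighted form of (\ref{bracket}) used in the proof of Lemma \ref{tight} gives
\[
\langle M(\psi_{n-\cdot}^x)\rangle_n \leq \sum_{k=1}^n \frac{\|\psi_{n-k+1}^x\|_\lambda}{N^{2\alpha}}\bigl(A\xi_{k-1}\e_{-\lambda},\, \psi_{n-k+1}^x\bigr).
\]
Applying H\"older on the outer sum, Jensen's inequality against the probability weight $\psi_{n-k+1}^x/N^{1+\alpha}$ (which integrates to $1$ by Lemma \ref{psi}(a)), and the inductive hypothesis $\E A\xi_{k-1}(y)^{p/2}\leq C\e_{\lambda_0 p/2}(y)$ with a strictly smaller weight $\lambda_0<\lambda$, the estimates of Lemma \ref{psi}(b),(c) dominate the sum by $C(\lambda,p,f,T)\e_{\lambda p}(x)$ uniformly in $N$.

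For part (a), apply the semimartingale decomposition (\ref{decomp}) with the constant-in-time test function $\phi^N_k \equiv \e_{-\lambda}$:
\[
(\nu_m^N,\e_{-\lambda}) = (A\xi_0,\e_{-\lambda}) + \sum_{i=1}^{m-1}(\nu_i^N,\, N^{-2\alpha}\Delta_D \e_{-\lambda}) + M_m(\e_{-\lambda}).
\]
A Taylor expansion of $\e^{-\lambda|x|}$ gives $|N^{-2\alpha}\Delta_D\e_{-\lambda}(x)|\leq C(\lambda)N^{-2\alpha}\e_{-\lambda}(x)$ (the non-smoothness at $x=0$ being absorbed in a harmless lower-order term), so the drift contribution is bounded by $C(\lambda)T\sup_k(\nu_k^N,\e_{-\lambda})$. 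Doob's $L^p$-inequality and BDG yield $\E\sup_m|M_m|^p \leq C_p\,\E\langle M\rangle_{\lfloor TN^{2\alpha}\rfloor}^{p/2}$, and the bracket bound (\ref{bracket}) combined with (c) and a Jensen's step against the finite measure $\e_{-\lambda}dx$ gives $\E\sup_m|M_m|^p \leq C(\lambda,p,f,T)$. A pointwise $L^p$ bound on $(\nu_k^N,\e_{-\lambda})$ follows from (c) via Lemma \ref{measure density}, and a discrete Gronwall argument on the resulting self-bounding inequality for $\|\sup_{k\leq\lfloor TN^{2\alpha}\rfloor}(\nu_k^N,\e_{-\lambda})\|_p$ closes the proof. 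The main technical obstacle is the loss of exponential weight at each induction step of (c) --- Jensen against a Gaussian-like kernel forces $\lambda_0<\lambda$ strictly --- but since $\lambda>0$ and $p$ are fixed, only finitely many doublings are required and the bookkeeping is absorbed into $C(\lambda,p,f,T)$.
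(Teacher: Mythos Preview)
Your proposal is essentially correct and would close, but it differs from the paper's argument in two places worth noting.

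For part (c), you run an induction on $p$ and worry about a loss of exponential weight at each doubling. The paper sidesteps both the induction and the bookkeeping by a single elementary inequality: from $a^{p/2}\leq 1+a^p$ one has $\|\E A^{p/2}(\xi_{k-1})\|_{-\lambda p}\leq \|1+\E A^{p}(\xi_{k-1})\|_{-\lambda p}$, which turns the BDG estimate directly into a closed inequality of the form
\[
\|\E A^p(\xi_{\lfloor tN^{2\alpha}\rfloor})\|_{-\lambda p}\leq C\Bigl(1+\sum_{k=1}^{\lfloor tN^{2\alpha}\rfloor}\frac{(\lfloor tN^{2\alpha}\rfloor-k+1)^{-1/2}}{N^{\alpha}}\|\E A^p(\xi_{k-1})\|_{-\lambda p}\Bigr),
\]
and then the (singular) discrete Gronwall lemma finishes. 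This is cleaner: no halving of $p$, no adjustment of $\lambda$, one fixed weight throughout. Incidentally, if you work through your Jensen step carefully (against the probability kernel $\psi/N^{1+\alpha}$), the feared $\lambda$-loss does not actually occur; but the paper's route makes the issue moot.

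For part (a), you invoke (c) to bound the bracket of $M(\e_{-\lambda})$ and then run a Gronwall on the drift. The paper instead keeps (a) self-contained: it uses only $(\nu^N_{k},\e_{-2\lambda})\leq(\nu^N_{k},\e_{-\lambda})\leq 1+(\nu^N_{k},\e_{-\lambda})^2$ inside the bracket, combines drift and martingale into one recursive bound on $\E\sup_{k\leq m}(\nu^N_k,\e_{-\lambda})^p$, and applies discrete Gronwall directly. This means the paper's logical order is (a), (b), (c) with no cross-dependence of (a) on (c). Your order (b), (c), (a) is fine, but when you pull (c) into the proof of (a) you must remember to apply (c) with a parameter $\lambda_0<\lambda/p$ so that $(e_{\lambda_0 p},\e_{-\lambda})<\infty$; otherwise the Jensen step you describe produces a divergent integral.

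For (b), the paper writes $(\nu_0^N,\psi_t^z)=(A\xi_0,\bar\psi_t^z)\leq\|A\xi_0\|_{-\lambda}(\e_\lambda,\bar\psi_t^z)$ and invokes Lemma~\ref{psi}(b) directly; your route through Lemma~\ref{measure density} reaches the same place with one extra step.
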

\begin{proof}
(a)
Plugging $\phi^N_i=\e_{-\lambda}$ into (\ref{decomp}) gives
$$(\nu_n^N,\e_{-\lambda})=(A\xi_0,\e_{-\lambda})+\sum_{i=1}^{n-1}(\nu_i^N,N^{-2\alpha}\Delta_D \e_{-\lambda})+M_n(\e_{-\lambda}).$$
Since $\Delta_D \e_{-\lambda}\leq C(\lambda)\e_{-\lambda}$, thanks to H{\"o}lder inequality, we have
\begin{align*}
\E\Big( \sup_{k\leq \lfloor tN^{2\alpha}\rfloor}(\nu_k^N,\e_{-\lambda})^p\Big) &\leq C(\lambda,p,f)+C(\lambda,p)\E\left( \sum_{i=1}^{\lfloor tN^{2\alpha}\rfloor-1}(\nu_i^N,N^{-2\alpha}\e_{-\lambda}) \right)^p+C(p)\E \sup_{k\leq \lfloor tN^{2\alpha}\rfloor}|M_s(\e_{-\lambda})|^p\\
&\leq C(\lambda,p,f)+C(\lambda,p)t^{p-1}N^{-2\alpha}\sum_{k=1}^{\lfloor tN^{2\alpha}\rfloor-1}\E(\nu_k^N,\e_{-\lambda})^p+C(p)\E\langle M(\e_{-\lambda})\rangle_{\lfloor tN^{2\alpha}\rfloor}^{\frac{p}{2}}
\end{align*}
The square variation in the last term satisfies
\begin{align*}
\langle M(\e_{-\lambda})\rangle_{\lfloor tN^{2\alpha}\rfloor} &\leq \frac{C(\lambda)}{N^{2\alpha}}\sum_{k=1}^{\lfloor tN^{2\alpha}\rfloor} (\nu_{k-1}^N,\e_{-2\lambda})\\
&\leq C(\lambda)\frac{1}{N^{2\alpha}}\sum_{k=1}^{\lfloor tN^{2\alpha}\rfloor}1+(\nu_{k-1}^N,\e_{-\lambda})^2.
\end{align*}
Use H{\"o}lder inequality again, we have
\begin{align*}
\E\Big( \sup_{k\leq \lfloor tN^{2\alpha}\rfloor}(\nu_k^N,\e_{-\lambda})^p\Big) &\leq C(\lambda,p,f)+C(\lambda,p)t^{p-1}N^{-2\alpha}\sum_{k=1}^{\lfloor tN^{2\alpha}\rfloor-1}\E(\nu_k^N,\e_{-\lambda})^p\\
&~~~~+C(p,\lambda)\E\left(\frac{1}{N^{2\alpha}}\sum_{k=1}^{\lfloor tN^{2\alpha}\rfloor}(\nu_{k-1}^N,e_{-\lambda})^2\right)^{\frac{p}{2}}\\
&\leq C(\lambda,p,f)+C(\lambda,p)T^{p-1}N^{-2\alpha}\sum_{k=1}^{\lfloor tN^{2\alpha}\rfloor-1}\E(\nu_k^N,\e_{-\lambda})^p\\
&~~~~+C(\lambda,p)T^{\frac{p}{2}-1}N^{-2\alpha}\sum_{k=1}^{\lfloor tN^{2\alpha}\rfloor-1}\E(\nu_k^N,e_{-\lambda})^p\\
&\leq C(\lambda,p,f,T)+C(\lambda,p,f,T)N^{-2\alpha}\sum_{k=1}^{\lfloor tN^{2\alpha}\rfloor-1}\E(\nu_k^N,e_{-\lambda})^p
\end{align*}
The discrete Gronwall's lemma concludes part (a)
\begin{align*}
\E \Big( \sup_{k\leq \lfloor tN^{2\alpha}\rfloor}(\nu_k^N,\e_{-\lambda})^p \Big)\leq C(\lambda,p,f,T).
\end{align*}

(b)
Let $\overline{\psi}_t^z(x)=N^{1+\alpha}\P(X_t=x-z)$. Since $\psi_t^z(x)=\frac{1}{2N}\sum_{y\sim x}\overline{\psi}_t^z(y)$, $(\nu_n,\psi_t^z)=(A(\xi_n),\overline{\psi}_t^z)$.
It is directly by using (b) of Lemma \ref{psi} since
\begin{align*}
(\nu_0,\psi_t^z)^p &= (A\xi_0,\overline{\psi}_t^z)^p\\
&\leq \|A(\xi_0)\|_{-\lambda}^p(\e_{\lambda},\overline{\psi}_t^z)^p\\
&\leq C(\lambda,p,f)\e_{\lambda p}(z)
\end{align*}

(c)
By (\ref{approx}) and (b),
\begin{align*}
\|\E(A^p(\xi_{\lfloor tN^{2\alpha}\rfloor}))\|_{-\lambda p}\leq C(\lambda,p,f,T)+C(p)\| \E |M_{\lfloor tN^{2\alpha}\rfloor}(\psi_{\lfloor tN^{2\alpha}\rfloor-\cdot})|^p\|_{-\lambda p}.
\end{align*}
For the second term above, by BDG inequality,
\begin{align*}
\E \left|M_{\lfloor tN^{2\alpha}\rfloor}(\psi_{\lfloor tN^{2\alpha}\rfloor-\cdot})\right|^p &\leq \E \langle M(\psi_{\lfloor tN^{2\alpha}\rfloor-\cdot})\rangle_{\lfloor tN^{2\alpha}\rfloor}^{\frac{p}{2}}\\
&\leq \E\left( \sum_{k=1}^{\lfloor tN^{2\alpha}\rfloor}\frac{\|\psi_{\lfloor tN^{2\alpha}\rfloor-k+1}\|}{N^{2\alpha}}(A\xi_{k-1},\psi_{\lfloor tN^{2\alpha}\rfloor-k+1}) \right)^{\frac{p}{2}}\\
&\leq \E\left( \sum_{k=1}^{\lfloor tN^{2\alpha}\rfloor}\frac{|\lfloor tN^{2\alpha}\rfloor-k+1|^{-\frac{1}{2}}}{N^{\alpha}}(A\xi_{k-1},\psi_{\lfloor tN^{2\alpha}\rfloor-k+1}) \right)^{p/2} \text{ (Lemma \ref{psi} (c))}\\
&\leq C(p,T)\sum_{k=1}^{\lfloor tN^{2\alpha}\rfloor}\frac{|\lfloor tN^{2\alpha}\rfloor-k+1|^{-\frac{1}{2}}}{N^{\alpha}}\left(\E A^{\frac{p}{2}}(\xi_{k-1}),\psi_{\lfloor tN^{2\alpha}\rfloor-k+1}\right)\text{ (Lemma \ref{psi} (a))}\\
&\leq C(p,T)\sum_{k=1}^{\lfloor tN^{2\alpha}\rfloor}\frac{|\lfloor tN^{2\alpha}\rfloor-k+1|^{-\frac{1}{2}}}{N^{\alpha}}\| \E A^{\frac{p}{2}}(\xi_{k-1})\|_{-\lambda p}\left(\e_{\lambda p},\psi_{\lfloor tN^{2\alpha}\rfloor-k+1}\right)\\
&\leq C(p,T,\lambda)\e_{\lambda p}(z)\sum_{k=1}^{\lfloor tN^{2\alpha}\rfloor}\frac{|\lfloor tN^{2\alpha}\rfloor-k+1|^{-\frac{1}{2}}}{N^{\alpha}} \| \E A^p(\xi_{k-1})+1\|_{-\lambda p} \text{ (Lemma \ref{psi}(b))}\\
&\leq C(p,T,\lambda)\e_{\lambda p}(z)\left( 1+ \sum_{k=1}^{\lfloor tN^{2\alpha}\rfloor}\frac{|\lfloor tN^{2\alpha}\rfloor-k+1|^{-\frac{1}{2}}}{N^{\alpha}}\| \E(A^p(\xi_{k-1}))\|_{-\lambda p}\right).
\end{align*}
This gives that
\begin{align*}
\|\E(A^p(\xi_{\lfloor tN^{2\alpha}\rfloor}))\|_{-\lambda p}\leq C(\lambda,p,f,T)\left( 1+\sum_{k=1}^{\lfloor tN^{2\alpha}\rfloor}\frac{|\lfloor tN^{2\alpha}\rfloor-k+1|^{-\frac{1}{2}}}{N^{\alpha}}\| \E(A^p(\xi_{k-1}))\|_{-\lambda p} \right).
\end{align*}
The discrete Gronwall lemma completes this proof.

\end{proof}

\newpage

\end{document}